\numberwithin{equation}{section}
\newtheorem{theorem}{Theorem}[section]
\newtheorem{proposition}[theorem]{Proposition}
\newtheorem{corollary}[theorem]{Corollary}
\newtheorem{lemma}[theorem]{Lemma}
\newcommand{\cali}[1]{\mathscr{#1}}
\newcommand{\Leb}{\mathop{\mathrm{Leb}}\nolimits}
\newcommand{\dist}{\mathop{\mathrm{dist}}\nolimits}
\newcommand{\vol}{\mathop{\mathrm{vol}}}
\newcommand{\ddc}{dd^c}
\def\d{\operatorname{d}}
\newcommand{\PSH}{{\rm PSH}}
\newcommand{\codim}{{\rm codim\ \!}}
\newcommand{\Cc}{\cali{C}}
\newcommand{\capK}{\text{cap}}
\newcommand{\B}{\mathbb{B}}
\newcommand{\C}{\mathbb{C}}
\newcommand{\N}{\mathbb{N}}
\newcommand{\Z}{\mathbb{Z}}
\newcommand{\R}{\mathbb{R}}
\renewcommand\P{\mathbb{P}}
\title{\bf Log continuity of solutions of complex Monge-Amp\`ere equations}
\providecommand{\subject}[1]{\textbf{\textit{Mathematics Subject Classification 2020:}} #1}
\author{Hoang-Son Do and Duc-Viet Vu}
\newcommand{\Addresses}{{
		\bigskip
		\footnotesize
		\textsc{Duc-Viet Vu, University of Cologne, Division of Mathematics, Department of Mathematics and Computer Science, Weyertal 86-90, 50931, K\"oln,  Germany}
		\noindent
		\par\nopagebreak
		\noindent
		\textit{E-mail address}: \texttt{dvu@uni-koeln.de}	
		
	\bigskip
\footnotesize
\textsc{Hoang-Son Do, Vietnam Academy of Science and Technology, Institute of Mathematics, 18 Hoang Quoc Viet road, Cau Giay, Hanoi, Vietnam}
\noindent
\par\nopagebreak
\noindent
\textit{E-mail address}: \texttt{dhson@math.ac.vn}	}}
\date{\today}
\begin{document}
\maketitle
\begin{abstract}  Let $X$ be a compact K\"ahler manifold whose anticanonical cohomology class is semipositive. Let $L$ be a big  and semi-ample line bundle on $X$ and $\alpha$ be the Chern class of $L$. We give a sufficient condition ensuring that the solution of the complex Monge-Amp\`ere equations in $\alpha$ with $L^p$ right-hand side ($p>1$) is $\log^M$-continuous for every constant $M>0$. As an application, we show that every singular Ricci-flat metric in a semi-ample integral class in a projective Calabi-Yau surface $X$ is globally $\log^M$-continuous with respect to a smooth metric on $X$. 
\end{abstract}
\noindent
\\

\noindent
\subject{32U15}, {32Q15}, {53C55}.


\section{Introduction}

Let $(X, \omega)$ be a compact K\"ahler manifold. A cohomology $(1,1)$-class $\alpha \in H^{1,1}(X,\R)$ is said to be semi-positive if $\alpha$ contains a  semi-positive smooth form.   Let $\theta$ be a  smooth closed $(1,1)$-form in a big and semi-positive cohomology class. We consider the following complex Monge-Amp\`ere equation 
\begin{align}\label{eq-MA}
(\ddc u+ \theta)^n= f\omega^n,\quad\sup_X u=0,
\end{align}
where $f \in L^p$ ($p>1$) is a nonnegative function so that $\int_X f \omega^n =\int_X \theta^n$. The regularity of solutions of (\ref{eq-MA}) is well-known if $\theta$ is K\"ahler thanks to pioneering works by Yau \cite{Yau1978} and Ko{\l}odziej \cite{Kolodziej_Acta}, and many subsequent papers. We refer to \cite{DemaillyHiep_etal,DKC_Holder-Sobolev,DinhVietanhMongeampere,KC-holder2018,Kolodziej08holder,Lu-To-Phung,NgocCuong-Holder-calc.var,NgocCuong-Holder2020,Tosatti-Weinkove,Vu_MA,Vu_MA_holder} and references therein for details on H\"older continuity of solutions when $\theta$ is K\"ahler. 

The focus of our work is the case where $\theta$ belongs to  a semi-positive  and  big cohomology class. In this general setting, 
it is  well-known by \cite{BEGZ} that the solution $u$ is smooth outside the non-K\"ahler locus of the cohomology class of $\theta$. By \cite{EGZ,Coman-Guedj-Zeriahi} or \cite{Dinew_Zhang_stability}, we know that the equation (\ref{eq-MA}) admits a unique continuous solution $u$ on $X$ if the cohomology class of $\theta$ is integral (see  \cite{GGZ} for more information). 
The aim of this paper is to quantify this continuity property of solutions. The methods in  \cite{EGZ,Coman-Guedj-Zeriahi} or \cite{Dinew_Zhang_stability} seem to be only qualitative. 
To state our results, we need to introduce some notions. 


Let $M>0$ be a constant. We fix a smooth Riemannian metric $\dist(\cdot, \cdot)$ on $X$. A function $u$ on $X$ is said to be $\log^M$-continuous if there exists a constant $C_M>0$ such that
$$|u(x)- u(y)| \le \frac{C_M}{|\log \dist(x, y)|^M},$$
for every $x, y \in X$. Let $K_X$ be the canonical line bundle of $X$. Recall that $X$ is Calabi-Yau  if $c_1(K_X)=0$, and  $X$ is Fano if $-c_1(K_X)$ is ample. A line bundle $L$ on $X$ is said to be semi-ample if $L^k$ is base-point free for some large enough integer $k \ge 1$. It is a well-known fact  (see \cite[Section 2]{Tosatti-noncollapse} for a summary) that $L$ is semi-ample if $X$ is a projective Calabi-Yau manifold and $L$ is big and nef.

Let $L$ be a big and semi-ample line bundle on $X$ (hence $X$ is forced to be projective by Moishezon's theorem). 
Let $d_k:= \dim H^0(X,L^k)$ and $\{s_1, \ldots, s_{d_k}\}$ be a basis of $H^0(X, L^k)$. We define $\Phi_k: X \to \C\P^{d_k-1}$ by putting 
$$\Phi_k(x):= [s_1(x): \cdots: s_{d_k}(x)].$$
Observe that $\Phi_k$ is a well-defined map outside $B(kL):= \cap_{s \in H^0(X, L^k)}\{s=0\}$. 
Since $L$ is semi-ample and big, 
there exists an integer $k_L>0$ such that $\Phi_{k_L}$ is bihomorphic outside the non-K\"ahler locus $N$ of $c_1(L)$ (see \cite[Theorem A]{Boucksom-Cacciola-Lopez}) and  $\Phi_{k_L}$ is an algebraic fibre space, \emph{i.e}, the fibers of $\Phi_{k_L}$ are connected  $X':= \Phi_{k_L}(X)$ is a normal variety  (see \cite[Theorem 2.1.27]{Lazarsfeld}).
Here is our main result in this work giving a partial answer to the above question. 

\begin{theorem}\label{the-log-continuity} Assume that $-c_1(K_X)$ is semi-positive  and  $X'$ has only isolated singularities. Let $\theta \in c_1(L)$ be a smooth form. Then the unique solution $u$ of (\ref{eq-MA}) is $\log^M$-continuous for every constant $M>0$. 
\end{theorem}

Our proof shows actually that Theorem \ref{the-log-continuity} is still true if $-c_1(K_X)$ contains a closed positive $(1,1)$-current of bounded potentials (see Subsection \ref{subsec-regu}).  Since $X'$ is normal, if $n=2$, then $X'$ always has isolated singularities. 
Theorem \ref{the-log-continuity} is probably the first known quantitative (global) regularity for solutions of complex Monge-Amp\`ere equations in a semi-positive class. We would like to notice that  it was proved in \cite{Guo-Phong-Tong-Wang} that the solution of the equation $(\ddc u+ \omega)^n= e^F \omega^n$ for $e^F \in L^1 (\log L)^p$ is  $\log^M$-continuous for $M:= \min \{\frac{p-n}{n}, \frac{p}{n+1}\}$; see also \cite{GGZ-logcontiu} for a recent development. As far as we can see, the method in \cite{Guo-Phong-Tong-Wang} or \cite{GGZ-logcontiu}  uses crucially the fact that $\omega$ is K\"ahler and it is not clear if this can be extended to semi-positive classes to obtain a $\log^M$-continuity for solutions of (\ref{eq-MA}). 

Assume that $X, L,\omega$ are as in the statement of Theorem \ref{the-log-continuity}. Hence the non-K\"ahler locus $N$ of $c_1(L)$  is a proper analytic subset in $X$; see \cite{Boucksom_anal-ENS}. Let $F$ be a  smooth function on $X$ such that $\int_X e^F \omega^n = \int_X (c_1(L))^n$ and denote by $\omega_F$  the (singular) positive  $(1,1)$-form on $X$ such that $\omega_F^n= e^F \omega^n$. Recall that $\omega_F$ is a genuine K\"ahler metric on $X \backslash N$.  

\begin{corollary}\label{cor-metricsemiample} Let $X, L,\omega, N$ and $F$ be as above and the hypothesis in Theorem \ref{the-log-continuity} holds.  Then for every constant $M>0$, there exists a constant $C_M>0$ such that 
\begin{align}\label{ine-revisdedomegalog}
d_{\omega_F}(x,y) \le C_M |\log \dist(x,y)|^{-M},
\end{align}
for every $x,y \in X \backslash N$, where $d_{\omega_F}$ is the distance induced by $\omega_F$ on $X \backslash N$.
\end{corollary}

In the case where $\theta$ is in a K\"ahler class, one has better estimates; see \cite{GGZ-logcontiu,YangLi,Vu-log-diameter} for details. We are not aware of any previous result similar to Corollary \ref{cor-metricsemiample} for merely semi-ample and big classes. The inequality (\ref{ine-revisdedomegalog}) implies, in particular, that $d_{\omega_F}$ is bounded. This fact can be also deduced from a much more general diamter estimate established recently in \cite{Guo-Phong-Song-Sturm,Guo-Phong-Song-Sturm2,GPSS_bodieukien} (see also  \cite{GuedjTo-diameter,Vu-diameter}).  As an immediate consequence of Corollary \ref{cor-metricsemiample}, we get the following. 

\begin{corollary} \label{cor-chinhquymetric} Let $X$ be a compact Calabi-Yau surface. Let  $L$ be a semi-ample and big line bundle on $X$ and $\omega_0$ is a (singular) K\"ahler Ricc-flat metric in $c_1(L)$. Then $\omega_0$ has a $\log^M$-continuous potential. Moreover, if $d_{\omega_0}$ denotes the distance induced by $\omega_0$ on $X \backslash N$ (where $N$ is non-K\"ahler locus of $c_1(L)$) then for every constant $M>0$ there is a constant $C_M>0$ so that   
$$d_{\omega_0}(x,y) \le C_M |\log \dist(x,y)|^{-M},$$
for every $x,y \in X\backslash N$.  
\end{corollary}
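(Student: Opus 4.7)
The strategy is to reduce the corollary to Theorem \ref{the-log-continuity} and Corollary \ref{cor-metricsemiample} via the complex Monge-Amp\`ere equation satisfied by a potential of $\omega_0$. Write $\omega_0 = \theta + \ddc u$ for a smooth reference form $\theta \in c_1(L)$ and a bounded $\theta$-psh function $u$ with $\sup_X u = 0$; boundedness of $u$ for a singular K\"ahler-Einstein metric in a big class is standard, cf.\ \cite{BEGZ,EGZ}. The existence of $\omega_0$ forces the cohomological identity $c_1(K_X^*) = \lambda\, c_1(L)$ for some $\lambda \in \R$, so the K\"ahler-Einstein condition reads $\ric(\omega_0) = \lambda \omega_0$. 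Using the $\ddc$-lemma one may arrange that $\ric(\omega) - \lambda \theta = \ddc H$ for some smooth $H$, which rewrites the K\"ahler-Einstein condition as
\begin{equation*}
(\theta + \ddc u)^n = e^{-\lambda u + G}\,\omega^n,
\end{equation*}
with $G$ smooth. Since $u$ is bounded, the right-hand side lies in $L^\infty$ and in particular in $L^p$ for every $p > 1$.

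Theorem \ref{the-log-continuity} then applies directly to this equation and yields the first assertion: the potential $u$ of $\omega_0$ is $\log^M$-continuous for every $M > 0$.

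For the distance estimate, set $F := -\lambda u + G$, so that $\omega_0^n = e^F \omega^n$. I would like to conclude by invoking Corollary \ref{cor-metricsemiample}. The subtle point is that $F$ is not smooth as that corollary requires; however, $F$ is bounded and $\log^M$-continuous for every $M > 0$ (inheriting its modulus of continuity from $u$, up to a smooth correction). My plan is to revisit the proof of Corollary \ref{cor-metricsemiample} and verify that the length-of-curves construction used there depends only on a uniform upper bound for $e^F$ and on the modulus of continuity of the associated potential -- both of which remain available in our setting with the same $\log^M$ estimate.

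The main obstacle is precisely this verification: if smoothness of $F$ enters the proof of Corollary \ref{cor-metricsemiample} in a more essential way, the workaround is to approximate $F$ uniformly by smooth functions $F_\varepsilon$ satisfying $\int_X e^{F_\varepsilon}\omega^n = \int_X c_1(L)^n$ together with uniform $L^\infty$ bounds, apply Corollary \ref{cor-metricsemiample} to each $F_\varepsilon$ to get constants $C_M$ depending only on $M$ and on those uniform bounds, and then pass to the limit $\varepsilon \to 0$ using stability estimates for complex Monge-Amp\`ere equations to transfer the bound from $d_{\omega_{F_\varepsilon}}$ to $d_{\omega_0}$ on $X\setminus N$.
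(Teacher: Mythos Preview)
Your approach is correct and matches the paper's, which gives no separate proof and simply declares the corollary an immediate consequence of Corollary~\ref{cor-metricsemiample}. Your plan (a) is exactly the right resolution of the smoothness concern you raise: inspecting the proof of Corollary~\ref{cor-metricsemiample} shows that the smoothness of $F$ is never used --- the argument passes through Lemma~\ref{le-xapxi} and Lemma~\ref{le-hamkhoangcach}, which require only that the potential $u$ be $\log^M$-continuous for every $M$ and smooth outside $N$. Both properties hold for the K\"ahler--Einstein potential once Theorem~\ref{the-log-continuity} is applied to the $L^\infty$ right-hand side $e^{-\lambda u + G}\omega^n$ (and smoothness on $X\setminus N$ is standard, cf.\ \cite{BEGZ}). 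Your backup plan (b) via smooth approximation of $F$ is therefore unnecessary, and would in any case require tracking the dependence of the constants in Theorem~\ref{the-log-continuity} and Corollary~\ref{cor-metricsemiample} on the data more carefully than the paper does.
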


We recall that the existence of singular Ricci-flat metric $\omega_0$ of bounded potentials was proved  in \cite{EGZ}, and the non-quantitative continuity of $\omega_0$ (for any dimension) was obtained in \cite{Coman-Guedj-Zeriahi} and \cite{Dinew_Zhang_stability}. 

We now explain main ideas in the proof of Theorem \ref{the-log-continuity}. We will need to approximate our smooth solution $u$ by smooth quasi-psh function $(u_\epsilon)_\epsilon$. Using \cite{Demailly_appro_chernconnec} or \cite[Theorem 4.12]{Demailly_analyticmethod} (analytic approximation for general closed positive $(1,1)$-currents), one obtains $(\theta+ \epsilon \omega)$-psh functions $u_\epsilon$ so that $u_\epsilon$ converges to $u$ in a quantitative way in $L^1$. However $\|\nabla u_\epsilon\|_{L^\infty}$   grows like $e^{1/ \epsilon}$. The fact that $u_\epsilon$ is only $(\theta+\epsilon \omega)$-psh and a bad control on $\|\nabla u_\epsilon\|_{L^\infty}$ is not usable in our approach. For this reason, we have to restrict ourselves to the line bundle setting for which a more precise approximation procedure is available. Precisely we will need a modified version of Demailly's analytic approximation of  singular (not necessarily K\"ahler) Hermitian metrics for a line bundle (Theorem \ref{th-analytic-approx}), this is the place where we need to use the hypothesis that $X'$ has only isolated singularities. This regularisation together with Ko{\l}odziej's capacity technique will give us a weak Log continuity property for $u$ (see Lemma \ref{le-logcontinuity}). Our second ingredient (Sections \ref{sec-log} and \ref{sec-log2}) is to say that a function satisfying this weak Log continuity property is indeed Log continuous as desired.

The paper is organized as follows. In Sections \ref{sec-log} and \ref{sec-log2}, we present important facts about log continuity of functions. In Section \ref{sec-capa}, we recall some facts about H\"older continuous measures. In Section \ref{sec.appr}, we present a 
 modified version of Demailly's analytic approximation.
The rest of the paper is devoted to the proof of main results. \\

\noindent
\textbf{Acknowledgement.} We thank George Marinescu for fruitful discussions. We are grateful to Jian Song for pointing out an error in a previous version of this paper.   The research of D.-V. Vu is partially funded by the Deutsche Forschungsgemeinschaft (DFG, German Research Foundation)-Projektnummer 500055552 and by the ANR-DFG grant QuaSiDy, grant no ANR-21-CE40-0016.
The research of H.-S. Do  is  funded by International Centre for Research and Postgraduate Training in Mathematics (ICRTM, Vietnam) under grant number ICRTM01 \_ 2023.01.

\section{Log continuity of pseudometrics}\label{sec-log}

Let $Z$ be a topological space and $d: Z^2 \to \R_{\ge 0}$ be a function. Let $B \ge 1$ be a constant. We say that $d$ is \emph{a $B$-pseudometric on $Z$} if the following holds:

(i) $d(x, x)=0$  for every $x\in Z$;

(i) $d$ is symmetric, i.e, $d(x,y)= d(y,x)$ for every $x, y\in Z$;

(ii) $d$ is continuous on $Z^2$;

(iii) for every $x_1,\ldots, x_m \in Z$, one has 
$$d(x_1, x_{m}) \le B \sum_{j=1}^{m-1} d(x_j,x_{j+1}).$$


\begin{lemma}\label{lemstep2}
	Let $U\subset\R^m$ be a bounded convex domain ($m\geq 2$). Let $B \ge 1$ be a constant.  Let $d:U\times U\rightarrow [0, \infty)$ be a 
	$B$-pseudometric satisfying the following condition:
there exist constants $\alpha>0$, $D>1$ and $C_0>0$ such that
\begin{equation}\label{eq0lemstep2}
		d(x, y)\leq \dfrac{C_0}{|\log |x-y||^{\alpha}},
\end{equation}
		for every $x, y\in U$ with $|x-y|^{D} \le \min\{\dist(x, \partial U), \dist(y, \partial U)\}$, where
		$$\dist (w, \partial U)=\inf\{|w-\xi|: \xi\in\partial U\}.$$
	Then, there exists a  constant $C>0$ depending only on $B, C_0, \alpha, D$ and $U$  such that
	$$d(x, y)\leq  \dfrac{C}{|\log |x-y||^{\alpha}},$$
	for every $x, y\in U$.
\end{lemma}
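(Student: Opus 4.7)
The plan is to reduce to the setting where \eqref{eq0lemstep2} applies by connecting $x$ and $y$ through a pair of parallel chains that push both endpoints to comparable interior depth. Write $r := |x-y|$ and $\delta := \min(\dist(x,\partial U), \dist(y,\partial U))$. If $\delta \ge r^D$ we are done immediately, so assume $\delta < r^D$. Fix once and for all a reference point $x_* \in U$ with $\rho_0 := \dist(x_*, \partial U) > 0$, and put $R := \mathrm{diam}(U)$.

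Define a sequence $0 = t_0 < t_1 < \cdots < t_k$ inductively by $t_j := t_{j-1} + R^{-1} \tilde{\delta}_{j-1}^{1/D}$, where $\tilde{\delta}_0 := \delta$ and $\tilde{\delta}_j := t_j \rho_0$ for $j \ge 1$, stopping at the first $k$ with $\tilde{\delta}_k \ge r^D$. Iterating $\tilde{\delta}_j \ge (\rho_0/R)\tilde{\delta}_{j-1}^{1/D}$ gives $\tilde{\delta}_j \ge c_* \delta^{1/D^j}$ with $c_* := \min(1,\rho_0/R)^{D/(D-1)}$; since $\delta^{1/D^j}\to 1$, the stopping condition is reached in finitely many steps. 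Set
$$x_j := (1-t_j)x + t_j x_*, \quad y_j := (1-t_j)y + t_j x_*.$$
Convexity of $U$ gives $\dist(x_j, \partial U),\dist(y_j, \partial U) \ge t_j\rho_0 \ge \tilde{\delta}_j$, while $|x_j - x_{j-1}|, |y_j - y_{j-1}| \le R(t_j - t_{j-1}) = \tilde{\delta}_{j-1}^{1/D}$. Hence \eqref{eq0lemstep2} applies to each consecutive pair of the two chains; it also applies to $(x_k, y_k)$ because $|x_k - y_k| = (1-t_k)r \le r$ and the depths at $x_k, y_k$ exceed $r^D \ge |x_k-y_k|^D$.

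It remains to sum the contributions via the $B$-pseudometric inequality. The complementary bound $\tilde{\delta}_j \le 2\tilde{\delta}_{j-1}^{1/D}$ iterates to $\tilde{\delta}_j \le C_*\delta^{1/D^j}$ with $C_* := 2^{D/(D-1)}$, whence $\bigl|\log|x_j - x_{j-1}|\bigr|, \bigl|\log|y_j - y_{j-1}|\bigr| \gtrsim |\log\delta|/D^j$. Applying \eqref{eq0lemstep2} to each step gives
$$\sum_{j=1}^k d(x_{j-1}, x_j) \le C_0\sum_{j=1}^k \frac{1}{\bigl|\log|x_j-x_{j-1}|\bigr|^\alpha} \lesssim \frac{1}{|\log\delta|^\alpha}\sum_{j=1}^k D^{\alpha j} \lesssim \frac{D^{\alpha k}}{|\log\delta|^\alpha}.$$
The stopping rule forces $D^k \lesssim |\log\delta|/|\log r|$ for small $r$, so this sum is $\lesssim |\log r|^{-\alpha}$; the $y$-chain satisfies the identical bound, and the closing term contributes at most $C_0|\log r|^{-\alpha}$. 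Combining via the $B$-pseudometric property yields $d(x,y) \le C|\log r|^{-\alpha}$ in the small-$r$ regime. For $|x-y| \ge r_0$ the same chain argument, carried all the way to $x_*$, bounds $d(\cdot, x_*)$ uniformly on $U$; hence $d$ is globally bounded and the desired inequality holds after enlarging $C$. The principal technical subtlety is that a single sequence $(t_j)$ must simultaneously govern both chains, which forces the choice $\delta = \min$; once this synchronisation is in place, the delicate cancellation between the $D^{-j}$ decay of the step-size logarithms and the $D^{j}$ growth of the geometric summation transforms a bound naively depending on $|\log \delta|$ into one depending only on $|\log r|$, which is the nontrivial content of the lemma.
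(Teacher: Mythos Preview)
Your argument is correct and rests on the same geometric--series mechanism as the paper's proof, but the chain is run in the opposite direction. The paper fixes the scale $\delta := |x-y|$, picks a point $x_0$ on $[x,a]$ at distance $\delta$ from $x$ (so $\dist(x_0,\partial U)\gtrsim \delta$), and builds an \emph{infinite} chain $x_k\in[x,x_0]$ with $|x-x_k|=\delta^{D^k}\to 0$; after subdividing each segment $[x_{k-1},x_k]$ into $M\approx 1/\dist(a,\partial U)$ equal pieces so that the hypothesis applies, the contributions form the convergent series $\sum_k D^{-(k-1)\alpha}|\log\delta|^{-\alpha}$, and one passes to the limit using continuity of $d$ to conclude $d(x,x_0)\lesssim|\log\delta|^{-\alpha}$. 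Your chain instead starts at $x$ and walks \emph{outward} toward a fixed interior point, with step sizes $\sim(\text{current depth})^{1/D}$, stopping once the depth reaches $|x-y|^D$; the series $\sum_j D^{\alpha j}|\log\delta|^{-\alpha}$ (your $\delta$ is the boundary distance) diverges, but the stopping rule $D^k\lesssim|\log\delta|/|\log|x-y||$ provides precisely the cancellation that converts the bound to $|\log|x-y||^{-\alpha}$.

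Each organisation has its small advantage: the paper's inward chain gives the target exponent directly (its $\delta$ is already $|x-y|$) at the cost of an infinite limit and the extra $M$-subdivision; your outward chain is finite and needs no subdivision (the factor $R^{-1}$ in $t_j-t_{j-1}$ absorbs what the paper handles with $M$), but you must close with the stopping estimate. Your treatment of the first step tacitly requires $\tilde\delta_1\ge\tilde\delta_0$, i.e.\ $\delta\le(\rho_0/R)^{D/(D-1)}$, which indeed follows from $\delta<r^D$ once $r$ is below a threshold depending only on $U$ and $D$; it is worth stating this explicitly. The large-$r$ endgame (global boundedness of $d$) is only sketched; to make it airtight, once your chain reaches depth $\rho_0/2$, subdivide the remaining segment to $x_*$ into a fixed number $\lceil R/(\rho_0/2)^{1/D}\rceil$ of equal pieces, each of which satisfies the hypothesis.
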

\begin{proof}
	Without loss of generality, we can assume that $diam (U)\leq 1$. In particular, $|x-y|^{D}\leq |x-y|$ for every $x, y\in U$. 
	
		Fix $a\in U$ and denote
		$r=\dist (a, \partial U)$. The desired assertion is clear if we have either   $|x-y| \ge r/2$ or   
		$$\min\{\dist(x, \partial U), \dist(y, \partial U)\}
\geq r/2\geq |x-y|$$
 (by \eqref{eq0lemstep2}). 
Consider now the case where  
$$ |x-y|\leq r/2\quad\mbox{and}\quad\min\{\dist(x, \partial U), \dist(y, \partial U)\}
	< r/2.$$
Thus, we have $\max\{|x-a|, |y-a|\}>|x-y|$. Without loss of generality, we can assume that
$|x-a|>|x-y|:=\delta>0$. Set 
$$x_0=\dfrac{(|x-a|-\delta)x}{|x-a|}+\dfrac{\delta a}{|x-a|}.$$
In other words, $x_0$ is a point in $[x, a]$ satisfying $|x-x_0|=\delta$. Since $U$ is convex, we have
\begin{equation}
	\dist(x_0, \partial U)\geq \dfrac{(|x-a|-\delta)\dist(x, \partial U)}{|x-a|}+\dfrac{\delta \dist(a, \partial U)}{|x-a|}
	\geq r \delta.
\end{equation}
For every $k\in\Z^+$, we denote by $x_k$ the point in $[x, x_0]$ satisfying $|x-x_k|=\delta^{D^k}$.
Then, we have
	$$\dist(x_k, \partial U)\geq \dfrac{\dist(x_0, \partial U)|x_k-x|}{|x-x_0|}\geq r \delta^{D^k}\quad
	\mbox{and}\quad |x_k-x_{k-1}|\leq \delta^{D^{k-1}}.$$
Put $M=\left[\dfrac{1}{r}\right]+1$, where $[\cdot]$ is the greatest integer function. For every 
$l=0,..., M$ and $k\in\Z^+$, we denote
$$x_{k, l}=x_{k-1}+\dfrac{l (x_k-x_{k-1})}{M}.$$
Then $|x_{k, l}-x_{k, l+1}|\leq \dfrac{\delta^{D^{k-1}}}{M}$. Moreover, since $\dist(., \partial U)$ is a concave function on $U$,
 we have
 $$\dist(x_{k, l}, \partial U)\geq\min\{\dist(x_k, \partial U), \dist(x_{k-1}, \partial U)\}\geq  r \delta^{D^k}
 \geq\dfrac{\delta^{D^k}}{M}.$$
 Therefore, by the condition \eqref{eq0lemstep2}, we get
 $$d(x_{k,l}, x_{k, l+1})\leq \dfrac{C_0}{|\log |x_{k, l}-x_{k, l+1}||^{\alpha}}\leq
  \dfrac{C_0}{D^{(k-1)\alpha}|\log\delta|^{\alpha}}.$$
  Thus, we have
  $$d(x_k, x_{k-1})\leq B \sum_{l=0}^{M-1}d(x_{k,l}, x_{k, l+1})\leq
   \dfrac{B C_0M}{D^{(k-1)\alpha}|\log\delta|^{\alpha}}.$$
Hence
$$d(x_k, x_0)\leq B \sum_{j=1}^{k}d(x_j, x_{j-1})
\leq  \dfrac{B^2 C_0M}{|\log\delta|^{\alpha}}\sum_{j=1}^kD^{-(j-1)\alpha}\leq  \dfrac{C_1}{|\log\delta|^{\alpha}},$$
where $C_1=\dfrac{B^2 C_0 M\, D^{-\alpha}}{1-D^{-\alpha}}=\dfrac{B^2C_0M}{D^{\alpha}-1}$.

Since $d$ is continuous on $U\times U$, one gets
\begin{equation}\label{eq2step2}
	d(x, x_0)=\lim_{k\to\infty}d(x_k, x_0)\leq \dfrac{C_1}{|\log\delta|^{\alpha}}.
\end{equation}
Since $|y-x_0|\leq |x-y|+|x-x_0|\leq 2\delta$, by using the same argument as above, we also have
\begin{equation}\label{eq3step2}
	d(y, x_0)\leq \dfrac{C_2}{|\log\delta|^{\alpha}},
\end{equation}
where $C_2>0$ depends only on $B, C_0, M, D$ and $\alpha$.

Combining \eqref{eq2step2} and \eqref{eq3step2}, we get
$$d(x, y)\leq B(d(x, x_0)+d(y, x_0))\leq \dfrac{B(C_1+C_2)}{|\log\delta|^{\alpha}}
=\dfrac{B(C_1+C_2)}{|\log |x-y||^{\alpha}}.$$
The proof is completed.
\end{proof}

\begin{lemma}\label{lem2step2}
	Let $U\subset\R^m$ be a bounded convex domain ($m\geq 2$). Let $B \ge 1$ be a constant. Assume $d:U\times U\rightarrow [0, \infty)$ is a 
	$B$-pseudometric satisfying the following condition:
	there exist constants $\alpha>1$ and $C_0>0$ such that
	\begin{equation}\label{eq0lem2step2}
		d(x, y)\leq \dfrac{C_0}{|\log |x-y||^{\alpha}},
	\end{equation}
	for every $x, y\in U$ with $|x-y| \le \min\{\dist(x, \partial U), \dist(y, \partial U)\}$, where
	$$\dist (w, \partial U)=\inf\{|w-\xi|: \xi\in\partial U\}.$$
	Then, there exists a  constant $C>0$ depending only on $B, C_0, \alpha, D$ and $U$  such that
	$$d(x, y)\leq  \dfrac{C}{|\log |x-y||^{\alpha-1}},$$
	for every $x, y\in U$.
\end{lemma}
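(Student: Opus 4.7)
The plan is to mirror the strategy of Lemma \ref{lemstep2}, but with a geometric chain of intermediate points (in place of the super-exponential $\delta^{D^k}$ scale), which will cost exactly one factor of $|\log\delta|$ in the final exponent and account for the drop from $\alpha$ to $\alpha-1$. I first normalize so that $\diam(U) \le 1$, fix $a \in U$ with $r := \dist(a, \partial U) > 0$, and consider $x, y \in U$ with $\delta := |x-y|$. The case when $\delta$ is bounded below is trivial, and when $\delta \le \min\{\dist(x, \partial U), \dist(y, \partial U)\}$ the hypothesis applies directly (even giving the stronger bound $|\log\delta|^{-\alpha}$). So I may assume $\delta$ is small and, say, $\dist(x, \partial U) < \delta$. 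Following the setup of Lemma \ref{lemstep2}, I define $x_0 \in [x, a]$ with $|x - x_0| = \delta$ and the analogous $y_0 \in [y, a]$; by convexity of $U$ together with concavity of $\dist(\cdot, \partial U)$ along line segments, one has $\dist(x_0, \partial U), \dist(y_0, \partial U) \ge r\delta$.

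The key step is to control $d(x, x_0)$ by chaining along a geometric sequence. Fix $\lambda \in (1, 1+r)$, e.g.\ $\lambda = 1 + r/2$, and set
$$z_k := x + \lambda^{-k}(x_0 - x), \qquad k \ge 0,$$
so that $z_0 = x_0$ and $z_k \to x$. Then $|z_k - z_{k+1}| = (1 - \lambda^{-1})\lambda^{-k}\delta$, while concavity of $\dist(\cdot, \partial U)$ on $[x, x_0]$ yields $\dist(z_k, \partial U) \ge \lambda^{-k} r\delta$. The choice of $\lambda$ ensures $1 - \lambda^{-1} \le r$, so each pair $(z_k, z_{k+1})$ satisfies the distance-to-boundary hypothesis of the lemma, and
$$d(z_k, z_{k+1}) \le \frac{C_0}{(|\log\delta| + k\log\lambda + O(1))^\alpha}.$$
By the $B$-pseudometric inequality, the continuity of $d$, and letting the endpoint tend to $x$, I obtain
$$d(x, x_0) \le B \sum_{k \ge 0} \frac{C_0}{(|\log\delta| + k\log\lambda + O(1))^\alpha}.$$
Since $\alpha > 1$, comparing with the integral $\int_0^\infty (|\log\delta| + t)^{-\alpha}\, dt$ bounds the sum by $C/|\log\delta|^{\alpha-1}$, with $C$ depending only on $B, C_0, \alpha, r$. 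A symmetric chain controls $d(y, y_0)$.

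It remains to estimate $d(x_0, y_0)$. Since $|x_0 - y_0| \le 3\delta$ and $\dist(\cdot, \partial U) \ge r\delta$ on the segment $[x_0, y_0]$ (again by concavity), I subdivide this segment into $M := \lceil 3/r \rceil$ equal pieces; each consecutive subpoint pair has length $\le 3\delta/M \le r\delta$ and lies at boundary distance $\ge r\delta$, so the hypothesis applies and yields $d(x_0, y_0) \le BM C_0/|\log(3\delta/M)|^\alpha = O(|\log\delta|^{-\alpha})$. Combining the three bounds via the $B$-pseudometric property,
$$d(x, y) \le B\bigl(d(x, x_0) + d(x_0, y_0) + d(y_0, y)\bigr) \le \frac{C}{|\log\delta|^{\alpha - 1}},$$
as claimed.

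The main obstacle is the series estimating $d(x, x_0)$: because the consecutive distances $|z_k - z_{k+1}|$ only decrease geometrically in $k$ (and not, e.g., like $\delta^{D^k}$ as in Lemma \ref{lemstep2}), the first $\sim |\log\delta|/\log\lambda$ terms are each of order $1/|\log\delta|^\alpha$, contributing in total an amount of order $1/|\log\delta|^{\alpha-1}$. This is precisely where the assumption $\alpha > 1$ is essential for convergence, and it is the structural reason for the loss of one power of $|\log\delta|$ in the conclusion.
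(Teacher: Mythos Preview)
Your proof is correct and follows essentially the same strategy as the paper's: a geometric chain $z_k$ along the segment $[x,x_0]$ (the paper uses ratio $2$ after normalizing $r\ge 1$, you use ratio $\lambda=1+r/2$ after normalizing $\diam U\le 1$), summing the resulting series $\sum_k (|\log\delta|+k)^{-\alpha}$ via the integral test, which is exactly where $\alpha>1$ enters and one power of $|\log\delta|$ is lost. The only cosmetic difference is the bridging step: the paper connects $y$ directly to $x_0$ (since $|y-x_0|\le 2\delta$, the same chaining argument applies with $y$ in place of $x$), whereas you introduce a separate $y_0\in[y,a]$ and then link $x_0$ to $y_0$ by a short subdivision---both routes work and give the same bound.
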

\begin{proof} 
	 We will use the same method as in the proof of Lemma \ref{lemstep2}.
	 Without loss of generality, we can assume that there exists $a\in U$ such that
	 $r=\dist (a, \partial U)\geq 1$.
	We only need to consider the case  where $ |x-y|\leq r/2\quad\mbox{and}\quad\min\{\dist(x, \partial U), \dist(y, \partial U)\}	< r/2$. 
		In this case, we have $\max\{|x-a|, |y-a|\}>|x-y|$. We can  assume that
	$|x-a|>|x-y|:=\delta>0$. Set 
	$$x_0=\dfrac{(|x-a|-\delta)x}{|x-a|}+\dfrac{\delta a}{|x-a|}.$$
	In other words, $x_0$ is a point in $[x, a]$ satisfying $|x-x_0|=\delta$. Since $U$ is convex, we have
	\begin{equation}
		\dist(x_0, \partial U)\geq \dfrac{(|x-a|-\delta)\dist(x, \partial U)}{|x-a|}+\dfrac{\delta \dist(a, \partial U)}{|x-a|}
		\geq r \delta.
	\end{equation}
	For every $k\in\Z^+$, we denote by $x_k$ the point in $[x, x_0]$ satisfying $|x-x_k|=2^{-k}\delta$.
	Then, we have
	$$\dist(x_k, \partial U)\geq \dfrac{\dist(x_0, \partial U)|x_k-x|}{|x-x_0|}\geq r 2^{-k}\delta\quad
	\mbox{and}\quad |x_k-x_{k-1}|\leq 2^{-k}\delta.$$
By the condition \eqref{eq0lem2step2}, we have
	$$d(x_{k}, x_{k-1})\leq \dfrac{C_0}{|\log |x_{k}-x_{k-1}||^{\alpha}}\leq
	\dfrac{C_0}{\left(|\log\delta|+k\log 2\right)^{\alpha}},$$
		for every $k\in\Z^+$.
	Hence
	$$d(x_k, x_0)\leq B \sum_{j=1}^{k}d(x_j, x_{j-1})
	\leq  \sum_{j=1}^k	\dfrac{B C_0}{\left(|\log\delta|+j\log 2\right)^{\alpha}}
	\leq\frac{B C_0}{\log 2}\int_{\log|\delta|}^{\infty}\frac{dt}{t^{\alpha}}	\leq  \dfrac{C_1}{|\log\delta|^{\alpha-1}},$$
	where $C_1=\frac{B C_0}{(\alpha-1)\log 2}$.
	
	Since $d$ is continuous on $U\times U$, one has
	\begin{equation}\label{eq2lem2step2}
		d(x, x_0)=\lim_{k\to\infty}d(x_k, x_0)\leq \dfrac{C_1}{|\log\delta|^{\alpha-1}}.
	\end{equation}
	Since $|y-x_0|\leq |x-y|+|x-x_0|\leq 2\delta$, by using the same argument as above, we also have
	\begin{equation}\label{eq3lem2step2}
		d(y, x_0)\leq \dfrac{C_2}{|\log\delta|^{\alpha-1}},
	\end{equation}
	where $C_2>0$ depends only on $B, C_0$ and $\alpha$.
	Combining \eqref{eq2lem2step2} and \eqref{eq3lem2step2}, we get
	$$d(x, y)\leq B(d(x, x_0)+d(y, x_0))\leq \dfrac{B(C_1+C_2)}{|\log\delta|^{\alpha-1}}
	=\dfrac{B(C_1+C_2)}{|\log |x-y||^{\alpha-1}}.$$

	The proof is completed.
\end{proof}

\begin{proposition} \label{pro-chuyendudistNdenlogcon}
	Let $N_1, N_2..., N_p$ be  affine subspaces of $\R^m$ such that $\codim (N_j)\geq 2$ for
	every $j=1,...,p$. Denote $N=\cup_{j=1}^p N_j$. Let $B \ge 1$, $\alpha>0$, $D\geq 1$ and $C_0>0$ be constants.   Let $d$ be  a $B$-pseudometric on $\B^m\setminus N$	satisfying one of the following conditions
	\begin{itemize}
		\item [(i)]    $D>1$ and 
		\begin{equation}\label{eq1prostep2}
		d(x, y)\leq \dfrac{C_0}{|\log |x-y||^{\alpha}},
		\end{equation}
		for every $x, y\in \B^{m}\setminus N$ with $|x-y|^D \le \min\{\dist(x, N), \dist(y, N)\}$.
		\item[(ii)] $D=1$ and
			\begin{equation}\label{eq1.1prostep2}
			d(x, y)\leq \dfrac{C_0}{|\log |x-y||^{\alpha+1}},
			\end{equation}
			for every $x, y\in \B^{m}\setminus N$ with $|x-y| \le \min\{\dist(x, N), \dist(y, N)\}$.
	\end{itemize}
Then, there exists $C>0$ depending only on $B, C_0, \alpha, D, N$ and $m$ such that
$$d(x, y)\leq \dfrac{C}{|\log |x-y||^{\alpha}},$$
for every $x, y\in\B^{m}\setminus N$.
\end{proposition}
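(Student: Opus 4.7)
The plan is to extend the chain arguments of Lemmas \ref{lemstep2} and \ref{lem2step2} to the non-convex domain $\B^m \setminus N$ by exploiting the codimension-two hypothesis to ``go around'' $N$. Convexity is replaced by two ingredients: every point can be pushed off $N$ along a good direction, and any two points already at distance $\gtrsim \delta$ from $N$ can be joined by a short quasi-geodesic inside $\B^m \setminus N$.

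The first step is a \emph{Good Direction Lemma}: there is $c_* > 0$ depending only on $N$ and $m$ such that for every $x \in \tfrac{1}{2}\B^m \setminus N$ there exists a unit vector $v$ with $\dist(x + tv, N) \geq c_* t$ for $t \in [0,1]$. The proof is a spherical measure count: for each $N_j$, writing $v = v_j^{(0)} + v_j^\perp$ with $v_j^\perp$ normal to $N_j$, the bad directions form a tube of radius $c_*$ around a half-ray in the $k_j$-dimensional normal space, hence of spherical measure $O(c_*^{k_j-1}) = O(c_*)$ since $k_j \geq 2$. A union bound over $j$ gives good directions for $c_*$ small. Points close to $\partial \B^m$ are reduced to the above by covering $\B^m$ with finitely many translates of $\tfrac{1}{2}\B^m$ (or by requiring $v$ inward-pointing).

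Let $x, y \in \B^m \setminus N$ with $\delta := |x-y|$ small; assume $\dist(x, N) < \delta^D$ in case (i) (resp. $< \delta$ in case (ii)), otherwise the hypothesis applies directly. Pick a good direction $v$ for $x$ and set $x_0 := x + \delta v$, so $\dist(x_0, N) \geq c_* \delta$ and $|x - x_0| = \delta$. To bound $d(x, x_0)$ I would construct a chain $x_0, x_1, x_2, \ldots$ converging to $x$ along the ray $x + \R_{\geq 0} v$, taking $x_k := x + \delta^{\gamma^k} v$ with some fixed $\gamma \in (1, D)$ in case (i), and $x_k := x + \delta\, a^{-k} v$ with $a := 1 + c_*/2$ in case (ii). A short computation using the Good Direction Lemma together with the choice of $\gamma$ (resp.\ $a$) shows each consecutive pair $(x_k, x_{k+1})$ satisfies the hypothesis for $\delta$ small. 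In case (i) one gets $d(x_k, x_{k+1}) \leq C_0/(\gamma^k |\log \delta|)^\alpha$, a geometric series in $\gamma^{-\alpha}$ of total $O(|\log \delta|^{-\alpha})$; in case (ii) one gets $d(x_k, x_{k+1}) \leq C_0/(|\log \delta| + k \log a)^{\alpha+1}$, which is summed by comparison with $\int_0^\infty (|\log \delta| + t \log a)^{-(\alpha+1)}\,dt = O(|\log \delta|^{-\alpha})$. The $B$-pseudometric inequality applied to $x_0, x_1, \ldots, x_K$ together with continuity of $d$ and $x_K \to x$ then yields $d(x, x_0) \leq C |\log \delta|^{-\alpha}$. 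Produce $y_0$ analogously.

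Finally, $|x_0 - y_0| \leq 3\delta$ and $\dist(x_0, N), \dist(y_0, N) \geq c_* \delta$, but the hypothesis may not apply directly to the pair $(x_0, y_0)$ since $3\delta$ can exceed $c_* \delta$. Instead, I would connect $x_0$ to $y_0$ by a chain $x_0 = z_0, z_1, \ldots, z_K = y_0$ of $K = O(1)$ points lying in the ``safe'' region $\{z \in \B^m : \dist(z, N) \geq c_* \delta/2\}$: the codimension-two hypothesis makes this region quasi-convex with constants depending only on $N$ and $m$, so any two of its points at mutual distance $O(\delta)$ can be joined by a path of length $O(\delta)$ inside it (for example by detouring along a sphere of radius $c_*\delta/2$ around the offending $N_j$). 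Each step of the resulting chain satisfies the hypothesis, giving $d(x_0, y_0) \leq C|\log \delta|^{-\alpha}$, and the $B$-pseudometric inequality applied to the chain $x \to x_0 \to y_0 \to y$ concludes the proof. The main obstacles are the Good Direction Lemma and the local quasi-convexity of $\B^m \setminus N$ uniformly in the points; once these are in place, the summations are routine.
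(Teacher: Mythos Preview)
Your approach is correct and takes a genuinely different route from the paper's. The paper chooses hyperplanes $H_j \supset N_j$ so that $\B^m \setminus H$ (with $H = \bigcup_j H_j$) decomposes into finitely many bounded convex pieces; on each such piece $U$ one has $\dist(\cdot,\partial U)\le\dist(\cdot,H)\le\dist(\cdot,N)$, so the hypothesis of Lemma~\ref{lemstep2} (resp.~\ref{lem2step2}) is inherited and those lemmas apply verbatim. Pairs $(x,y)$ are then handled by following the segment $[x,y]$ across the convex cells, invoking the detour Lemma~\ref{le-chonduongcong} only in the exceptional case $[x,y]\cap N\neq\varnothing$. You instead bypass the convex decomposition entirely: your Good Direction Lemma replaces the role that convexity of $U$ plays in the proofs of Lemmas~\ref{lemstep2}--\ref{lem2step2}, furnishing the ray along which the geometric chain $(x_k)$ is laid, and your final ``safe-region'' connection from $x_0$ to $y_0$ is essentially the content of Lemma~\ref{le-chonduongcong}. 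In case~(i) your route is actually lighter: for $\delta$ small one has $(3\delta)^D\le c_*\delta$, so the hypothesis applies directly to $(x_0,y_0)$ and no quasi-convexity step is needed at all. In case~(ii) both proofs ultimately rest on the same detour construction. The paper's argument has the virtue of reusing Lemmas~\ref{lemstep2}--\ref{lem2step2} as black boxes and concentrating all the codimension-two geometry in Lemma~\ref{le-chonduongcong}; your argument is more portable, since the Good Direction Lemma and the ray-chain would carry over to, say, smooth submanifolds of codimension $\ge 2$, where no hyperplane slicing into convex cells is available.
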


\begin{proof}
	We will give the proof for the first case where $(i)$ is satisfied. The  second case is similar
	(use Lemma \ref{lem2step2} in place of Lemma \ref{lemstep2}). 
	Recall that  $d$ is a continuous function on $(\B^m\setminus N)\times (\B^m\setminus N)$.
	Let $H_j$ be a hyperplane containing $N_j$ for  $j=1,..., p$, and denote $H=\cup_{j=1}^pH_j$. Observe that the connected components of $\B^{m}\setminus H$ are bounded convex subsets of $\R^m$. Moreover, if $U$ is a connected
	component of $\B^{m}\setminus H$ then by \eqref{eq1prostep2}, $d$ satisfies the condition \eqref{eq0lemstep2}
	 in Lemma \ref{lemstep2}.

	Let $x, y\in \B^{m}\setminus N$. We distinguish into three cases.
	
	{\bf Case 1:} there exists a connected component $U$ of $\B^{m}\setminus H$ such that
	 $x, y\in\overline{U}\setminus N$.\\
	In this case, by Lemma \ref{lemstep2} and by the continuity of $d$, we have
	$$d(x, y)\leq \dfrac{C_U}{|\log |x-y||^{\alpha}},$$
	 where $C_U>0$ is a constant depending only on $B, C_0, \alpha, D$ and $U$.
	 
	 {\bf Case 2:} $[x, y]\cap H\neq\emptyset$ but $[x, y]\cap N=\emptyset$.\\
	 In this case, there exist connected components $U_1, U_2, ..., U_k$  of $\B^{m}\setminus H$ 
	  and $x_0, x_1, x_2,...,x_{k}\in [x, y]$ such that $x_0=x\in\overline{U_1}$,
	 $x_k=y\in\overline{U_k}$ and
	 $x_j\in \partial U_j\cap\partial U_{j+1}$ for every $j=1,...,k-1$. Using the result in Case 1, we have
	 \begin{align*}
	 	d(x, y)\leq B \sum_{j=1}^{k}d(x_j, x_{j-1})&\leq \sum_{j=1}^k\dfrac{B C_{U_j}}{|\log |x_j-x_{j-1}||^{\alpha}}\\
	 	&\leq \dfrac{k B C_1}{|\log |x-y||^{\alpha}}\\ 
	 	&\leq \dfrac{(p+1)B C_1}{|\log |x-y||^{\alpha}},
	 \end{align*}
 where $C_1=\sup\{C_U: U $ is a connected component of $ \B^m\setminus N \}$.
 
 {\bf Case 3:} $[x, y]\cap N\neq\emptyset$.\\
 Denote $f(t)=tx+(1-t)y$, $0\leq t\leq 1$. Then, there exist $0<k\leq p$ and $0<t_1<t_2<...<t_k<1$ such that
 $$[x, y]\cap N=\{f(t_j): j=1,...,k\}.$$
 By Lemma \ref{le-chonduongcong} below, for every $j=1,...,k$ and for every $0<\epsilon\ll 1$,
  there exists
 a piecewise linear curve $l=a_0a_1...a_{4^p}$ with $a_0=f(t_j+\epsilon)$ and $a_{4^p}=f(t_j-\epsilon)$ such that
 $l$ does not intersect $N$ and 
 $$L(l)\leq C_2|f(t_j+\epsilon)-f(t_j-\epsilon)|=2 C_2\epsilon|x-y|,$$
 where $C_2\geq 1$ is a constant depending only on $p$. Therefore, by the result in
 Case 2, we have
 \begin{equation}\label{eq2prostep2}
 	 d(f(t_j+\epsilon), f(t_j-\epsilon)) =O(\epsilon).
 \end{equation}
Denote $t_0=0$ and $t_{k+1}=1$. By Case 2 and by \eqref{eq2prostep2}, we have

 \begin{align*}
 	d(x, y)&=\lim_{\epsilon\to 0+} d(f(t_0+\epsilon), f(t_{k+1}-\epsilon))\\
 	&\leq \limsup_{\epsilon\to 0+}B\sum_{j=0}^kd(f(t_j+\epsilon), f(t_{j+1}-\epsilon))
 	+\limsup_{\epsilon\to 0+}B\sum_{j=1}^kd(f(t_j+\epsilon), f(t_{j}-\epsilon))\\
 	&\leq \limsup_{\epsilon\to 0+}\sum_{j=0}^k \dfrac{B^2C_1(p+1)}{|\log |f(t_j+\epsilon)- f(t_{j+1}-\epsilon)||^{\alpha}}\\
 	&\leq \dfrac{B^2C_1(p+1)^2}{|\log |x-y||^{\alpha}}.
 \end{align*}
 The proof is completed.
\end{proof}

The following lemma plays also an important role in our proof later. 

\begin{lemma} \label{le-chonduongcong} Let $N_1, N_2,..., N_k$ be affine subspaces of $\R^m$ such that
	$\codim (N_j)\geq 2$ for every $j=1,...,k$. Denote $N=\cup_{j=1}^kN_j$.
	 Then,  there is a constant $C\geq 1$ depending only on $k$ (and $m$)  satisfying the following property:
	  for every $x, y\in \R^m\setminus N$, there is a polygonal chain $l=a_0a_1...a_{4^k}$ with
	   $a_0=x$ and $a_{4^k}=y$ such that
	 $$C \dist(\xi, N) \ge  \min \{\dist(x,N), \dist(y,N)\},$$ 
	 for every $\xi\in \cup_{s=0}^{4^k-1}[a_s, a_{s+1}]$, and
	 	$$L(l) \le C |x-y|,$$
	 	 where $L(l)=|a_0-a_1|+|a_1-a_2|+...+|a_{4^k-1}-a_{4^k}|$ is the length of $l$. 
\end{lemma}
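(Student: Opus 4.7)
I prove Lemma \ref{le-chonduongcong} by induction on $k$. For $k = 0$, take $l = [x, y]$; all stated conditions are vacuous with $C = 1$. For the inductive step, assume the result holds for $k-1$ affine subspaces with constant $C_{k-1}$, and set $r := \min\{\dist(x, N), \dist(y, N)\}$. If $|x - y| \le r/(2C_{k-1})$, the segment $[x, y]$ itself stays at distance $\ge r/2$ from $N$, and we pad with repeated vertices to reach $4^k$ formal segments. Otherwise, the strategy is to build a coarse four-segment chain $x = b_0, b_1, b_2, b_3, b_4 = y$ that detours around $N_k$, then apply the inductive hypothesis to each coarse segment with respect to $N_1 \cup \cdots \cup N_{k-1}$, yielding $4^{k-1}$ refined sub-segments per piece for $4^k$ sub-segments in total.

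The coarse detour uses crucially the codimension-$\ge 2$ hypothesis on $N_k$. After translating so $N_k \ni 0$, the orthogonal projection $\pi\colon \mathbb{R}^m \to L_k^\perp$ has $\dim L_k^\perp \ge 2$ and sends $N_k$ to the origin. One takes a detour in $L_k^\perp$ around $0$ of the form
\[
x \;\to\; x + Ru \;\to\; \tfrac{x+y}{2} + Ru \;\to\; y + Ru \;\to\; y,
\]
for an appropriate unit vector $u \in L_k^\perp$ and radius $R$. The vector $u$ is chosen to satisfy several genericity conditions simultaneously over all $j \le k$: the angle between $\pi_{V_j^\perp}(u)$ and $\pi_{V_j^\perp}(x - x_j), \pi_{V_j^\perp}(y - x_j)$ is bounded away from $0$ and $\pi$ (so the end segments $[x, x+Ru]$ and $[y, y+Ru]$ stay far from $N_j$, via convexity of $t \mapsto \dist(\cdot, N_j)^2$ along lines), and the middle vertices $b_l$ lie at distance $\gtrsim r$ from every $N_{j'}$ for $j'<k$ (so that the inductive hypothesis applies usefully). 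Each bad set has small measure on the unit sphere of $L_k^\perp$---here the hypothesis $\codim N_j \ge 2$ is critical to give the bad sets positive codimension in the sphere---so a valid $u$ exists by a union bound, with constants depending only on $k$ and $m$.

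The main obstacle is that the refinement produced by the inductive hypothesis on each coarse segment has length $\le C_{k-1}|b_j - b_{j+1}|$ and may therefore stray by as much from the coarse chain, potentially approaching $N_k$ even though the coarse chain was engineered to avoid it. Resolving this requires a careful balance in the choice of $R$: small enough that the end segments do not bring us far from $x$ or $y$, yet large enough that the middle segments lie at distance from $N_k$ exceeding $C_{k-1}$ times the coarse segment length, so that the straying of the refinement cannot reach $N_k$. Tracking constants through the recursion, $C_k$ may grow super-exponentially in $k$, but remains finite and dependent only on $k$ and $m$, as the statement permits. Concatenating the four refined chains then produces the required $4^k$-segment polygonal chain with $L(l) \le C \cdot |x-y|$ and $\dist(\xi, N) \ge r/C$ for every $\xi$, completing the induction.
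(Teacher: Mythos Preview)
Your inductive scheme reverses the order of the paper's: the paper first applies the inductive hypothesis to obtain a chain avoiding $N'=N_1\cup\cdots\cup N_{k-1}$, and \emph{then} perturbs each segment by an explicitly controlled amount (at most $A/(2C_0)$, via Lemma~\ref{le-chonduongcongA}) to also avoid $N_k$; since the perturbation is small and explicit, it does not destroy the $N'$-avoidance. You instead build a coarse detour around $N_k$ first and apply the inductive hypothesis afterwards. This order is genuinely problematic, because the inductive refinement is a black box controlled only through its total length, and your sketch does not close the resulting gaps.

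Concretely, your claim that ``the middle vertices $b_l$ lie at distance $\gtrsim r$ from every $N_{j'}$ for $j'<k$'' can fail for any choice of $u\in L_k^\perp$. Take $m=4$, $N_1=\{(0,0,s,t)\}$, $N_2=\{(s,t,0,0)\}$, so $L_2^\perp=V_1$. With $x=(r,0,r,0)$ and $y=(-r,0,r,0)$ one has $\dist(x,N)=\dist(y,N)=r$, yet $\tfrac{x+y}{2}=(0,0,r,0)\in N_1$, and since every $u\in L_2^\perp$ is tangent to $N_1$, the vertex $b_2=\tfrac{x+y}{2}+Ru$ lies on $N_1$ for \emph{all} $u$ and $R$. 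Your genericity/union-bound argument cannot repair this: when $L_k^\perp\subset V_{j'}$ one has $\pi_{V_{j'}^\perp}(u)=0$ identically, so the distance to $N_{j'}$ is unchanged by the detour. Consequently the inductive hypothesis applied to $[b_1,b_2]$ gives no lower bound at all on $\dist(\xi,N')$.

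A second, related issue concerns the end segments $[x,x+Ru]$ and $[y+Ru,y]$. You say you ``apply the inductive hypothesis to each coarse segment,'' but the inductive refinement of $[x,x+Ru]$ could stray by up to $C_{k-1}R$ from $x$ and thus meet $N_k$, which sits only at distance $r$ from $x$; your proposed balance on $R$ (large enough for the middle, small enough for the ends) is impossible once $|x-y|\gg r$. One can salvage the end segments by using them unrefined (your angle condition does show they already avoid all $N_j$), and the midpoint issue can be avoided by dropping the vertex $\tfrac{x+y}{2}+Ru$ altogether; but as written, the argument does not go through. The paper's ``induct first, then perturb'' order sidesteps all of this.
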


In order to prove Lemma \ref{le-chonduongcong}, we need the following elementary lemma:

\begin{lemma}\label{le-chonduongcongA}
	Let $N$ be an affine subspace of $\R^m$ with $\codim N\geq 2$. Let $r\geq 1$ be a constant. Then, for every $x, y\in\R^m\setminus N$, there
	exists $w\in\R^m\setminus N$ such that
	$$|x-w|+|w-y|\leq 3|x-y|,$$
	and
	$$2r\dist (\xi, N)\geq\min\{\dist(x, N), \dist (y, N)\}\geq r\dist(\xi, [x, y]),$$
	for every $\xi\in [x, w]\cup [w, y]$.
\end{lemma}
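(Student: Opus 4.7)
I plan to construct $w$ explicitly after reducing to a two-dimensional problem via the orthogonal decomposition of $\R^m$ with respect to $N$. Translate so that $N$ passes through $0$, let $\pi\colon\R^m \to N^\perp$ be the orthogonal projection, and put $u := \pi(x)$, $v := \pi(y)$, $\delta := \min(|u|,|v|) = \min\{\dist(x,N),\dist(y,N)\}$. Since $\codim N = \dim N^\perp \ge 2$, I fix a two-dimensional subspace $P \subseteq N^\perp$ containing both $u$ and $v$ (extending to $2$ dimensions if they are linearly dependent); the problem then reduces to constructing a planar detour around the origin inside $P$. Introduce orthonormal coordinates $(e_1,e_2)$ of $P$ with $e_1$ parallel to $v-u$ (arbitrary if $u=v$), so that $u = (a - L/2,\, b)$ and $v = (a + L/2,\, b)$ in $P$-coordinates, with $L = |v-u|$ and (without loss of generality) $b \ge 0$.

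The construction splits according to whether the planar segment $[u,v]$ stays at distance at least $\delta/(2r)$ from the origin. In the easy case I take $w := (x+y)/2$, so that the polygonal chain reduces to $[x,y]$ itself: the length bound is trivial, $\dist(\xi,N) = |\pi(\xi)| \ge \dist([u,v],0) \ge \delta/(2r)$, and $\dist(\xi,[x,y]) = 0$. In the hard case the hypotheses $|u|,|v| \ge \delta$, $b < \delta/(2r)$ and $r \ge 1$ together force $(a \pm L/2)^2 \ge 3\delta^2/4$, hence $|a| \le L/2 - \delta\sqrt{3}/2$ and, crucially, $L \ge \delta\sqrt{3}$; in particular $|y-x| \ge L \ge \delta$, which is essential for the length bound. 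I then set
\[
  s_0 := \tfrac{1}{2} - \tfrac{a}{L} \in [0,1],\qquad \xi_0 := (1-s_0)x + s_0\,y,\qquad w := \xi_0 + \tfrac{\delta}{r}\, e_2,
\]
so that $\pi(\xi_0) = (0,b)$ and $\pi(w) = (0,\, b + \delta/r)$.

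The verification is by direct computation. The displacement $z_0 := (\delta/r)\,e_2 \in N^\perp$ is orthogonal to $v - u$ and hence to $y - x$, giving $|w-x|^2 = s_0^2\,|y-x|^2 + |z_0|^2$ and $|w-y|^2 = (1-s_0)^2\,|y-x|^2 + |z_0|^2$; using $|z_0| = \delta/r \le \delta \le |y-x|$ yields $|w-x| + |w-y| \le |y-x| + 2|z_0| \le 3|y-x|$. For $\xi = (1-t)x + tw \in [x,w]$ the same orthogonality gives $\dist(\xi,[x,y]) \le t|z_0| \le \delta/r$, while in $P$-coordinates
\[
  |\pi(\xi)|^2 = (1-t)^2\,(a - L/2)^2 + \bigl(b + t\delta/r\bigr)^2,
\]
which must be bounded below by $\delta^2/(4r^2)$ uniformly in $t \in [0,1]$; the segment $[w,y]$ is symmetric. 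This last uniform lower bound is the step I expect to be the main obstacle: the boundary values at $t=0,1$ are automatic ($\ge \delta^2$ and $\ge \delta^2/r^2$ respectively), but the convex parabola above has an interior minimum of the shape $A(b+c)^2/(A+c^2)$ with $A = (a-L/2)^2$ and $c = \delta/r$, and verifying that this is at least $\delta^2/(4r^2)$ for all admissible $A \ge 3\delta^2/4$ and $b \in [0,\delta/(2r))$ is where the estimate $A \ge 3\delta^2/4$ produced by the case analysis and the standing hypothesis $r \ge 1$ both enter decisively. The whole argument also uses $\codim N \ge 2$ in an essential way, both to furnish the two-dimensional plane $P \subseteq N^\perp$ and, equivalently, to provide the transverse direction $e_2$ used to push $\xi_0$ off the bad locus.
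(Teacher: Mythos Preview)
Your plan is correct, and the ``main obstacle'' you flag is actually a one-line computation: since $b\ge 0$ one has $(b+c)^2\ge c^2$, so it suffices to show $Ac^2/(A+c^2)\ge c^2/4$, i.e.\ $3A\ge c^2$; with $A\ge 3\delta^2/4$ and $c^2=\delta^2/r^2\le\delta^2$ (here $r\ge 1$ enters) this gives $3A\ge 9\delta^2/4>\delta^2\ge c^2$. So the proof closes without further difficulty.

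Your route is genuinely different from the paper's. The paper argues geometrically by a trichotomy on how the line through $x,y$ sits relative to $N$: parallel (or hitting $N$ only outside $[x,y]$), intersecting $N$ inside $[x,y]$, or skew to $N$. In the second case it passes to a hyperplane $\tilde N\supset N$ containing $x,y$ and pushes the intersection point orthogonally off $\tilde N$; in the third it uses the common perpendicular between the line and $N$. You instead project once and for all to $N^\perp$, reduce to a planar problem in a $2$-plane $P\subset N^\perp$, and give a single explicit formula $w=\xi_0+(\delta/r)e_2$ covering both of the paper's nontrivial cases. What you gain is uniformity and a shorter argument (no separate skew-line analysis, no auxiliary hyperplane); what the paper's case split buys is that in each case the verification is a clean Pythagorean identity rather than a quadratic-in-$t$ minimisation. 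Both exploit $\codim N\ge 2$ in exactly the same way, to produce a direction orthogonal to both $N$ and $y-x$.
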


\proof   Observe that the function $\dist (\cdot, N)$ is convex on $\R^m$. Indeed, for every $a, b\in\R^m$,
there exist $a_0, b_0\in\N$ such that $|a-a_0|=\dist(a, N)$ and $|b-b_0|=\dist (b, N)$. Hence, if 
$\eta= \alpha a + (1- \alpha) b$ for some $\alpha\in [0, 1]$ then
\begin{align*}
\alpha\dist (a, N)+(1-\alpha)\dist (b, N)&=\alpha|a-a_0|+(1-\alpha)|b-b_0|\\
&\geq |\alpha(a-a_0)+(1-\alpha)(b-b_0)|\\
&=|\eta-(\alpha a_0
+(1-\alpha)b_0)|\geq \dist (\eta, N).
\end{align*}

Let $R:= \min \{\dist(x, N), \dist(y,N)\}$. If  $\dist(\eta, N) \ge R/(2r)$ for every $\eta \in [x,y]$, then $w:=x$ satisfies the desired property. Assume,  from now on, that there is a point $\eta \in [x,y]$ such that $\dist(\eta, N) \le R/(2r) \le R/2$. We deduce that 
\begin{align}\label{ine-sosanhxyR}
|x-y|=|x-\eta|+ |\eta- y| \ge \dist(x,N)- \dist(\eta, N)+ \dist(y,N)- \dist(\eta, N) \ge R.
\end{align} 
We distinguish into three cases\\

{\bf Case 1:} Either $[x,y]$ is parallel to $N$ or the line passing through $x,y$ intersects $N$ but $[x,y] \cap N = \varnothing$.\\
 In this case, we can take $w:= x$.\\
 
 {\bf Case 2:} $[x,y] \cap N \not = \varnothing$.\\
  Since $\codim N \ge 2$, there exists a hyperplane $\tilde{N}$ containing $x,y, N$. Let $w_0= [x,y] \cap N$.  Let $w$ be a point in $\R^m \backslash \tilde{N}$ so that $|w-w_0|= \frac{R}{r}$ and $[w,w_0]$ is orthogonal to $\tilde{N}$. We have
$$|w-x|+ |w-y| \le |x- w_0|+ 2 |w-w_0|+ |y-w_0| \le |x-y|+ 2R \le 3 |x-y|,$$
where the last estimate holds due to (\ref{ine-sosanhxyR}). 

Moreover, if $\xi\in [x, w]\cup [y, w]$ then
$$\dist( \xi, [x, y])\leq \dist (w, [x, y])=|w-w_0|=\frac{R}{r}.$$
Let $x_0\in N$ such that $|x-x_0|=\dist (x, N)$. If $\xi\in [x, w]$ then $\xi=\alpha x+(1-\alpha)w$ for some $\alpha\in [0, 1]$. 
Since $x-x_0\perp N$ and $w-w_0\perp \tilde{N}$, we have
$$\dist (\xi, N)=|\xi-\alpha x_0-(1-\alpha)w_0|=\sqrt{\alpha^2|x-x_0|^2+(1-\alpha)^2|w-w_0|^2}\geq\frac{R}{2r}.$$
Similarly, if $\xi\in [y, w]$ then we also have $\dist (\xi, N)\geq\frac{R}{2r}.$
Then, $w$ satisfies the desired properties.

{\bf Case 3:}  $[x,y]$ is not parallel to $N$ and the line $d$ passing through $x,y$ does not intersect $N$.\\
 In this case, there exist $w_1 \in N$, $w_2 \in d$ such that $[w_1,w_2]$ is orthogonal to $N$ and $d$, and $|w_1-w_2|= \min\{|z-z'|: z \in d, z' \in N\}$.  Using the convexity of $d(\cdot, N)$ and the fact that there exists $\eta \in [x,y]$ with $d(\eta, N) < R/2$, we deduce that $d(\xi, N) > R/2$ for every $\xi \in d \backslash [x,y]$. Consequently $w_2 \in [x,y]$.

Let $w$ be the point in the line passing through $w_1,w_2$ such that $w_2$ lies between $w_1$ and $w$, and $|w-w_2|= R/r$. We check that $w$ satisfies the required properties. 
Let $\xi \in [w,x]$. Write $\xi= \alpha x+ (1- \alpha) w$ for some constant $\alpha \in [0,1]$. Let $\xi_0, x_0$ be points in $N$ such that $[x,x_0]$ and $[\xi,\xi_0]$ are orthogonal to $N$. We have 
$$\xi_0= \alpha x_0+ (1- \alpha) w_1.$$
Compute
\begin{align*}
|\xi- \xi_0|^2 &= |\alpha(x-x_0)+ (1- \alpha)(w-w_1)|^2 \\
&= \alpha^2 |x-x_0|^2+ (1- \alpha)^2 |w-w_1|^2+ 2 \alpha (1- \alpha) \langle x-x_0, w-w_1\rangle. 
\end{align*}
Recall that $w-w_1$ is both orthogonal to $N$ and $d$. It follows that 
$$\langle x-x_0, w-w_1 \rangle = \langle x- w_2, w-w_1 \rangle + \langle w_2- w_1, w- w_1 \rangle + \langle w_1- x_0, w-w_1 \rangle$$ 
which is equal to $\langle w_2- w_1, w- w_1 \rangle \ge 0$. Hence we obtain 
\begin{align*}
|\xi- \xi_0|^2 & \ge   \alpha^2 |x-x_0|^2+ (1- \alpha)^2 |w-w_1|^2 \\
& \ge  \alpha^2 R^2+ (1- \alpha)^2 R^2/ r^2 \ge \frac{R^2}{2r^2} \cdot
\end{align*}
Since $\dist(\xi,N)= |\xi-\xi_0|$, we infer 
$$2\dist(\xi, N) \ge R/r.$$
On the other hand, we have
$$\dist(\xi, [x,y]) \le \dist(w,[x,y]) \le |w-w_2|= R/r.$$
We obtain a similar inequalities if $\xi \in [w,y]$. Finally, observe 
$$|w-x|+ |w-y| \le  |w- w_2|+ |x-w_2|+ |w-w_2|+ |w_2-y|= 2R+ |x-y| \le 3R \le 3|x-y|,$$
because $w_2 \in [x,y]$ and we used here (\ref{ine-sosanhxyR}).   Thus $w$ satisfies the desired properties.

  This finishes the proof. 
\endproof

\begin{proof}[Proof of Lemma \ref{le-chonduongcong}]
We will use induction in $k$.
The case $k=1$ is an immediate corollary of Lemma \ref{le-chonduongcongA}. Assume that Lemma \ref{le-chonduongcong}
is true for $k=k_0$. We will show that it is also true for $k=k_0+1$.

Denote $N'=N_1\cup N_2\cup ...\cup N_{k_0}$ and $N=N_1\cup N_2\cup...\cup N_{k_0+1}$. 
Let $x, y\in\R^m\setminus N$, $x\neq y$. By the induction assumption, there exists  a polygonal chain
$l_0=a_0a_1...a_{4^{k_0}}$ with $a_0=x$ and $a_{4^{k_0}}=y$ such that
\begin{equation}\label{eq1le-chonduongcong}
	L(l_0)\leq C_0|x-y|,
\end{equation}
and
\begin{equation}\label{eq2le-chonduongcong}
	C_0 \dist (\xi, N')\geq \min\{\dist(x, N'), \dist(y, N')\},
\end{equation}
for every $\xi\in \cup_{s=1}^{4^{k_0}}[a_{s-1}, a_s]$, where $C_0\geq 1$
 is a constant depending only on $k_0$ and $m$. 

We will construct a polygonal chain $l=b_0b_1...b_{4^{k_0+1}}$ satisfying the conditions as in Lemma \ref{le-chonduongcong}.
Denote 
$$A:= \min\{\dist(x, N), \dist(y, N)\}.$$
 If $2C_0 \dist (\xi, N)\geq A$
for every $\xi\in \cup_{s=1}^{4^{k_0}}[a_{s-1}, a_s]$ then we can choose $l=l_0$ and $C=2C_0$.
 It remains to consider the case where $2C_0 \dist (\xi_0, N)< A$ for some 
$\xi_0\in \cup_{s=1}^{4^{k_0}}[a_{s-1}, a_s]$. In this case, we have
\begin{equation}\label{eq3le-chonduongcong}
L(l_0)\geq |x-\xi_0|+|y-\xi_0|\geq (\dist (x, N)-\dist (\xi_0, N))+(\dist (y, N)-\dist (\xi_0, N))\geq A.
\end{equation}
For every $s=0,..., 4^{k_0}$, we define $b_{4s}$ as follows
\begin{itemize}
	\item If $2C_0 \dist (a_s, N)\geq A$ then we put $b_{4s}=a_s$;
	\item If  $2C_0 \dist (a_s, N)< A$ then we choose $b_{4s}\in\R^m$ such that the vector $b_{4s}-a_s$ is perpendicular to $N_{k_0+1}$ and 
	\begin{equation}\label{eq4le-chonduongcong}
	\dist (b_{4s}, N_{k_0+1})=|a_s-b_{4s}|+\dist (a_s, N_{k_0+1})=\frac{A}{2C_0}.
	\end{equation}
\end{itemize}
Thus we have
\begin{equation}\label{eq5le-chonduongcong}
|b_{4s}-b_{4s+4}|\leq |a_s-a_{s+1}|+|a_s-b_{4s}|+|a_{s+1}-b_{4s+4}|\leq |a_s-a_{s+1}|+\frac{A}{C_0},
\end{equation}
and
\begin{equation}\label{eq6le-chonduongcong}
\dist (\xi, [a_s, a_{s+1}])\leq\max\{|b_{4s}-a_s|, |b_{4s+4}-a_{s+1}| \}\leq\frac{A}{2C_0},
\end{equation}
for each $\xi\in [b_{4s}, b_{4s+4}]$ and for every $s=0, 1, ..., 4^{k_0}-1$. 

Combining \eqref{eq2le-chonduongcong} and \eqref{eq6le-chonduongcong}, we get
\begin{equation}\label{eq7le-chonduongcong}
\dist (\xi, N')\geq \inf_{\eta\in [a_s, a_{s+1}]}\dist(\eta, N')-\dist (\xi, [a_s, a_{s+1}])\geq \frac{A}{2C_0},
\end{equation}
for each $\xi\in [b_{4s}, b_{4s+4}]$ and for every $s=0, 1, ..., 4^{k_0}-1$. 

We will find $b_{4s+1}, b_{4s+2}$ and $b_{4s+3}$ such that
\begin{itemize}
	\item [(i)] $\sum_{j=4s}^{4s+3}|b_j-b_{j+1}|\leq 3|a_s-a_{s+1}|+\frac{2A}{C_0}$;
	\item [(ii)] $\dist (\xi, N)\geq\frac{A}{8C_0}$ for every $\xi\in \cup _{j=4s}^{4s+3}[b_j, b_{j+1}]$.
\end{itemize}
We distinguish into three cases.\\

\noindent
{\bf Case 1:}  $\dist (\xi, N_{k_0+1})\geq\frac{A}{4C_0}$ for all $\xi\in [b_{4s}, b_{4s+4}]$.\\
 In this case, we put  $b_{4s+1}= b_{4s+2}=b_{4s+3}=b_{4s+4}$. It follows from
  \eqref{eq5le-chonduongcong} and \eqref{eq7le-chonduongcong} that the conditions $(i)$ and $(ii)$ are satisfied.\\

 \noindent
 {\bf Case 2:}  $\dist (\xi_0, N_{k_0+1})<\frac{A}{4C_0}$ for some $\xi_0\in [b_{4s}, b_{4s+4}]$ and either $a_s\neq b_{4s}$
 or $a_{s+1}\neq b_{4s+4}$.\\
 In this case, we have
 $$\min\{\dist (b_{4s}, N_{k_0+1}), \dist (b_{4s+4}, N_{k_0+1})\}=\frac{A}{2C_0}.$$
 By Lemma \ref{le-chonduongcongA}, we can choose $q\in \R^n$ such that
 \begin{equation}\label{eq7.1le-chonduongcong}
 |b_{4s}-q|+|b_{4s+4}-q|\leq 3|b_{4s}-b_{4s+4}|
 \end{equation}
 and 
 \begin{equation}\label{eq7.2le-chonduongcong}
 4\dist (\xi, N_{k_0+1})\geq\frac{A}{2C_0}\geq 2\dist (\xi, [b_{4s}, b_{4s+4}]),
 \end{equation}
 for every $\xi\in [b_{4s}, q]\cup [q, b_{4s+4}]$.
 
 By \eqref{eq7le-chonduongcong} and \eqref{eq7.2le-chonduongcong}, we have
 \begin{equation}\label{eq7.3le-chonduongcong}
 \dist (\xi, N')\geq \inf_{\eta\in [b_{4s}, b_{4s+4}]}\dist (\eta, N')-\dist (\xi, [b_{4s}, b_{4s+4}])\geq\frac{A}{4C_0}.
 \end{equation}
 for every $\xi\in [b_{4s}, q]\cup [q, b_{4s+4}]$.
 
 Put $b_{4s+1}=b_{4s+2}=b_{4s+3}=q$.
It follows from \eqref{eq5le-chonduongcong} and \eqref{eq7.1le-chonduongcong} that $(i)$ is satisfied.
 By \eqref{eq7.2le-chonduongcong} and \eqref{eq7.3le-chonduongcong}, we also get $(ii)$.\\
 
 \noindent
 {\bf Case 3:}  $\dist (\xi_0, N_{k_0+1})<\frac{A}{4C_0}$ for some $\xi_0\in [b_{4s}, b_{4s+4}]$ and  $a_j=b_{4j}$
for $j=s, s+1$.\\
	 In this case,
  we choose $b_{4s+2}\in\R^m$ such that the vector $b_{4s}-a_s$ is perpendicular to $N_{k_0+1}$ and 
	\begin{equation}\label{eq8le-chonduongcong}
	\dist (b_{4s+2}, N_{k_0+1})=|b_{4s+2}-b_{4s}|+\dist (\xi_0, N_{k_0+1})=\frac{A}{4C_0}.
	\end{equation}
	Similar to \eqref{eq5le-chonduongcong} and \eqref{eq6le-chonduongcong} 
	(and note that $a_j=b_{4j}$ for $j=s, s+1$), we have
	\begin{equation}\label{eq9le-chonduongcong}
	|b_{4s}-b_{4s+2}|+|b_{4s+2}-b_{4s+4}|\leq |b_{4s}-b_{4s+4}|+\frac{A}{2C_0}=|a_s-a_{s+1}|+\frac{A}{2C_0},
	\end{equation}
	and 
	\begin{equation}\label{eq10le-chonduongcong}
\dist (\xi, [a_s, a_{s+1}])=	\dist (\xi, [b_{4s}, b_{4s+4}])\leq \frac{A}{4C_0},
	\end{equation}
	for every $\xi\in [b_{4s}, b_{4s+2}]\cup [b_{4s+2}, b_{4s+4}]$.
	
	Combining \eqref{eq2le-chonduongcong} and \eqref{eq10le-chonduongcong}, we get
	\begin{equation}\label{eq10.1le-chonduongcong}
	\dist (\xi, N')	\geq \frac{3A}{4C_0},
	\end{equation}
	for every $\xi\in [b_{4s}, b_{4s+2}]\cup [b_{4s+2}, b_{4s+4}]$.

By using Lemma \ref{le-chonduongcongA} for $[b_{4s}, b_{4s+2}]$ and $[b_{4s+2}, b_{4s+4}]$,
 we can choose $b_{4s+1}$ and $b_{4s+3}$ such that
 \begin{equation}\label{eq11le-chonduongcong}
 \sum_{j=4s}^{4s+3}|b_j-b_{j+1}|\leq 3(|b_{4s}-b_{4s+2}|+|b_{4s+2}-b_{4s+4}|),
 \end{equation}
	and
\begin{equation}\label{eq12le-chonduongcong}
2\dist (\xi, N_{k_0+1})\geq\frac{A}{4C_0}\geq \dist (\xi, [b_{4s}, b_{4s+2}]\cup [b_{4s+2}, b_{4s+4}]),
\end{equation}
for every $\xi\in \cup_{j=4s}^{4s+3}[b_j, b_{j+1}]$.

By \eqref{eq10.1le-chonduongcong} and \eqref{eq12le-chonduongcong}, we have
\begin{equation}\label{eq13le-chonduongcong}
\dist (\xi, N')\geq\inf_{\eta\in  I}\dist (\eta, N')
-\dist (\xi,  I)
\geq \frac{A_0}{2C_0},
\end{equation}
for every $\xi\in \cup_{j=4s}^{4s+3}[b_j, b_{j+1}]$, where $I=[b_{4s}, b_{4s+2}]\cup [b_{4s+2}, b_{4s+4}]$.
It follows from \eqref{eq12le-chonduongcong} and \eqref{eq13le-chonduongcong} that $(ii)$ is satisfied. By \eqref{eq9le-chonduongcong}
and \eqref{eq11le-chonduongcong}, we also obtain $(i)$.

Now, let $l_0=b_0b_1...b_{4^{k_0+1}}$. By $(ii)$, we have
$$\dist (\xi, N)\geq\frac{1}{8C_0}\min\{\dist(x, N), \dist (y, N)\}.$$
By \eqref{eq1le-chonduongcong}, \eqref{eq3le-chonduongcong} and $(i)$, we have
\begin{align*}
L(l)=\sum_{j=0}^{4^{k_0+1}-1}|b_j-b_{j+1}|&\leq\sum_{s=0}^{4^{k_0}-1}\left(3|a_s-a_{s+1}|+\frac{2A}{C_0}\right)\\
&=3 L(l_0)+4^{k_0}\frac{2A}{C_0}\\
&\leq 3\left(1+\frac{4^{k_0}}{C_0}\right)L(l_0)\\
&\leq 3\left(C_0+4^{k_0}\right)|x-y|.
\end{align*}
Choosing $C=8\left(C_0+4^{k_0}\right)$, we see that $l$ and $C$ satisfy the desired conditions.
Thus, Lemma \ref{le-chonduongcong} is true in the case $k=k_0+1$.
This completes the proof.
\end{proof}

\section{Log continuity preserved under blowups} \label{sec-log2}

Let $f: X \to Y$ be a smooth surjective map between compact differentiable manifolds. Let $g_X, g_Y$ be Riemannian metrics on $X,Y$ respectively. Let $d_{g_X},d_{g_Y}$ denote the distances induced by $g_X, g_Y$ on $X,Y$ respectively. For $E \subset X$, let $d_X(a, E):= \inf_{b \in E} d_X(a,b)$.  For every $a,b \in Y$, we define 
$$d_{g_X, f}(a,b):= \inf_{a' \in f^{-1}(a), b' \in f^{-1}(b)}d_{g_X}(a',b').$$
We note the last function is in general not a metric on $Y$.
Observe 
\begin{align} \label{ine-sosanhdfiber}
d_{g_Y} \le C d_{g_X,f}
\end{align}
for some constant $C>0$ because the differential $Df$ is bounded uniformly on $X$. 

\begin{lemma} \label{le-successiveblowup} Let $X_0, \ldots, X_m$ be compact complex manifolds and assume that $f:X_j \to X_{j-1}$ is the blow up along a smooth submanifold $V_{j-1} \subset X_{j-1}$ in $X_{j-1}$ for $1 \le j \le m$. Denote $f:= f_m \circ \cdots \circ f_0: X_m \to X_0$. Let $g_j$ be a Riemannian metric on $X_j$ for $1 \le j \le m$.    Then,
 for every  function  $u: X_0\rightarrow\R$, if $u \circ f_m$ is  a  $\log^M$-continuous function for some $M>0$, then so is $u$.
\end{lemma}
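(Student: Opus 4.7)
My plan is to proceed by induction on $m$. Given the result for $m-1$, I would write $f = g\circ f_m$ with $g := f_{m-1}\circ \cdots\circ f_1: X_{m-1}\to X_0$; applying the base case $(m=1)$ to the function $u\circ g$ on $X_{m-1}$ and the single blow-up $f_m: X_m\to X_{m-1}$ yields the $\log^M$-continuity of $u\circ g$ on $X_{m-1}$, after which the inductive hypothesis applied to $u$ and $g$ gives the result. (I am reading the hypothesis ``$u\circ f_m$ is $\log^M$-continuous'' as a typo for $u\circ f$.) Thus it suffices to treat the case $m=1$: $f: X_1\to X_0$ is a single blow-up along a smooth submanifold $V\subset X_0$, which I assume has codimension $\ge 2$ (otherwise $f$ is a diffeomorphism and there is nothing to prove).

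For the base case, let $E = f^{-1}(V)$ be the exceptional divisor, and fix a smooth tubular retraction $\pi: U\to V$ on a neighborhood $U$ of $V$. Two standard local facts about the blow-up will be used, both verified by direct computation in local coordinates $(v,z)$ on $X_0$ with $V=\{z=0\}$ together with a projective chart on $X_1$:
\begin{itemize}
\item[(a)] on $X_1\setminus E$, $f^{-1}$ has pointwise Lipschitz constant bounded by $C/d_{g_0}(\cdot, V)$;
\item[(b)] for $x\in U\setminus V$ with unique lift $x'\in X_1\setminus E$, the point $a(x):=(\pi(x), [\vec n_x])\in E$, where $[\vec n_x]$ is the direction from $\pi(x)$ to $x$ in the normal bundle, satisfies $d_{g_1}(x', a(x))\le C\, d_{g_0}(x, V)$.
\end{itemize}

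Now fix $x, y\in X_0$ and let $\delta := d_{g_0}(x, y)$, taken as small as needed. If $\min\{d_{g_0}(x,V), d_{g_0}(y,V)\} \ge \delta^{1/2}$, both points have unique lifts $x', y'$, and (a) yields $d_{g_1}(x', y')\le C\delta/\delta^{1/2} = C\delta^{1/2}$; the $\log^M$-continuity of $u\circ f$ applied to $(x', y')$ then gives $|u(x)-u(y)|\le C_M/|\log(C\delta^{1/2})|^M \le C'_M/|\log\delta|^M$. Otherwise, without loss of generality $d_{g_0}(x, V) < \delta^{1/2}$; the triangle inequality forces $d_{g_0}(y, V) < 2\delta^{1/2}$ and $d_{g_0}(\pi(x), \pi(y))\lesssim \delta^{1/2}$. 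I then use the telescoping estimate
\begin{equation*}
|u(x) - u(y)| \le |u(x) - u(\pi(x))| + |u(\pi(x)) - u(\pi(y))| + |u(\pi(y)) - u(y)|,
\end{equation*}
and bound each summand via the $\log^M$-continuity of $u\circ f$ with suitable lifts: for the first and third summands I apply (b), so the lifted distances in $X_1$ are $\lesssim d_{g_0}(x,V)$ and $\lesssim d_{g_0}(y,V)$, both $\le 2\delta^{1/2}$; for the middle summand I lift $\pi(x), \pi(y)\in V$ to $(\pi(x), [e])$ and $(\pi(y), [e])$ with a common fiber direction $[e]$ in a local trivialization of $N_{V/X_0}$, so the lifted distance is $\lesssim d_{g_0}(\pi(x), \pi(y))\lesssim \delta^{1/2}$. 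Each summand is thus $\le 2^M C_M/|\log\delta|^M$, summing to give the required $\log^M$ estimate.

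The main technical obstacle is that two points close together in $X_0$ but on opposite sides of $V$ can have unique lifts in $X_1$ nearly antipodal within the projective fibers of $E$, so no direct Hölder-type bound on lifted distances is available globally. The key device circumventing this is the three-step detour through $V$ in the second case, which exploits the freedom to choose a common fiber direction for lifts of points lying in $V$; the $\tfrac{1}{2}$-power threshold ensures that along this detour all three relevant lifted distances in $X_1$ are controlled by $\delta^{1/2}$, which is exactly what is needed to preserve the $\log^M$-continuity (with a factor $2^M$ loss in the constant).
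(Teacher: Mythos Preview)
Your proof is correct and follows essentially the same route as the paper: reduce to a single blow-up by induction, then split into a ``far from $V$'' case (where the pointwise Lipschitz bound on $f^{-1}$ gives $d_{g_1}(x',y')\lesssim \delta^{1/2}$) and a ``close to $V$'' case handled by projecting both points to $V$ and telescoping through $\pi(x),\pi(y)$. The paper uses the cruder bound $|Df^{-1}|\lesssim d_{g_0}(\cdot,V)^{-2}$ and a threshold $d_{g_0}(\cdot,V)\ge \delta^{1/4}$, and in the close case it goes through an unnecessary detour with exponential inversions before reaching the same triangle-inequality conclusion; your choice of the sharper $|Df^{-1}|\lesssim d_{g_0}(\cdot,V)^{-1}$ bound and threshold $\delta^{1/2}$ is cleaner but not materially different.
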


We note that a similar property for H\"older continuity was proved in \cite{BinGuo}. The following proof is more or less similar. 

\proof This is indeed implicitly in the proof of Lemma 3.4 in \cite{BinGuo}  if $m=1$. The general case follows from an immediate induction on $m$. For readers' convenience, we reprove below the case where $m=1$. 

Let $f_1: X_1 \to X_0$ be the blow up along a smooth submanifold $V$ in $X_0$. Denote $n:= \dim X_0$ and $l:= \dim V$. Asume $\xi\in V$ and let $\big(U, x=(x_1, \ldots, x_n)\big)$ be a local chart around $\xi$ such that $V$ is given by $\{x_j=0, 1\le j \le n-l\}$. Thus $f_1^{-1}(U)$ is biholomorphic to the submanifold of $U \times \C\P^{n-l-1}$ defined by the equations  $x_j v_s= v_j x_s$ for $1\le j, s \le n-l$, where $v:=[v_1: \cdots:v_{n-l}]$ are the homogeneous coordinates on $\C\P^{n-l-1}$. One can cover $f^{-1}_1(U)$ by $(n-l)$ open subsets 
$$U_j:= \big \{(x, v) \in f_1^{-1}(U): v_j \not =0\big\}.$$ 
  In $U_j$, we have
$$f_1(x, v)= (v_1 x_j/v_j, v_2x_j/v_j, \ldots, v_{n-l}x_j/v_j,  x_{n-l+1}, \ldots, x_n).$$

Now let $a,b \in X_0$. It suffices to consider $a,b$ close to each other and both close to $V$ (because $f_1$ is biholomorphic outside $V$).  We split the proof into several cases. Firstly observe that if $a,b \in V$, then since $f_1:f_1^{-1}(V) \to V$ is a submersion, one gets 
$$C d_{g_0}(a,b) \ge d_{g_1,f_1}(a,b),$$
for some constant $C>0$ independent of $a$ and $b$. Hence 
\begin{align}\label{ine-uholder0chuyenf}
|u(a)- u(b)|&= \inf_{a' \in f_1^{-1}(a), b' \in f_1^{-1}(b)} |u \circ f_1(a')- u \circ f_1(b')| \\
\nonumber
&\lesssim |\log d_{g_1,f_1}(a,b)|^{-M} \lesssim |\log d_{g_0}(a,b)|^{-M}.
\end{align}
Note that in the last inequality, we only consider $a$ and $b$ close to each other, hence $\log d_{g_0}(a,b)<0$.
\\

\noindent
\textbf{Case 1.} Consider now $b \in V$ and $a \not \in V$ but close to $b$. Then there is a local chart $(U, x)$ on $X_0$ containing $b,a$ such that $V$ is given by $\{x_j=0, 1\le j \le n-l\}$. We use now the Euclidean distance on that local chart.  

Without loss of generality, we can assume that $b=0$, $a=(x'_{1}, \ldots, x'_{n})$ with $x'_1\neq 0$ and $h(t):=(t x'_{1}, \ldots, t x'_{n})\in U$ for every $t \in [0,1]$.    We see that 
$$\hat{h}(t):=f_1^{-1} \circ h(t)= (t x'_1,\ldots, tx'_n, [x'_1:\ldots: x'_n]),$$
for $t>0$. Letting $t \to 0$ gives 
$$\hat{h}(0):=\lim_{t \to 0} f_1^{-1} \circ h(t)= (0,\ldots, 0, [x'_1:\ldots: x'_n]) \in f_1^{-1}(b).$$
We infer that
$$d_{g_1}\big(\hat{h}(1),\hat{h}(0)\big) \lesssim  |x'_{1}|+ \cdots + |x'_{n}| \lesssim |a-b|.$$
It follows that 
$$d_{g_1}\big(\hat{h}(1), f^{-1}(b)\big) \lesssim |a-b| \lesssim d_{g_0}(a,b).$$ 
Hence $d_{g_1,f_1}(a,b) \lesssim d_{g_0}(a,b)$. Thus we get an estimate similar to (\ref{ine-uholder0chuyenf}).
\\

\noindent
\textbf{Case 2.} Consider now $a,b \not \in V$ but close to $V$. Direct computations show  that $|D f^{-1}_1(a)| \lesssim |d_{g_0}(a, V)|^{-2}$. Thus we get 
$$d_{g_1,f_1}(a,b) \lesssim \max \{d_{g_0}(a,V)^{-2}, d_{g_0}(b,V)^{-2}\} d_{g_0}(a,b).$$  
Hence if $\min \{d_{g_0}(a,V)^{2}, d_{g_0}(b,V)^{2}\} \ge d_{g_0}(a,b)^{1/2}$, then 
$$d_{g_1,f_1}(a,b) \lesssim d_{g_0}(a,b)^{1/2}.$$
We treat now the case where $\min \{d_{g_0}(a,V)^{2}, d_{g_0}(b,V)^{2}\} \le d_{g_0}(a,b)^{1/2}$. Without loss of generality, we can assume that $d_{g_0}(b, V) \le d_{g_0}(a,b)^{1/4}$. Then
$$d_{g_0}(a,V) \le d_{g_0}(a,b)+ d_{g_0}(b,V) \lesssim d_{g_0}(a,b)^{1/4}.$$ 
Now we consider a local chart $(U,x)$ containing $a,b$. We use now the Euclidean metric. Let $a_V, b_V$ be the projection of $a,b$ to $V$ respectively. 
  Observe that 
$$|a_V- b_V| \lesssim |a-b|$$
and 
$$|a-a_V| \lesssim |a-b|^{1/4}, |b-b_V| \lesssim |a-b|^{1/4}.$$
Now applying Case 1 to $(a,a_V), (b,b_V)$ and $(a_V, b_V)$, one obtains 
\begin{align}\label{ine-abaVbV}
d_{g_1,f_1}(a,a_V)+d_{g_1,f_1}(a_V,b_V)+d_{g_1,f_1}(b_V,b) \lesssim |a-a_V|+ |a_V- b_V|+ |b-b_V| \lesssim |a-b|^{1/4}.
\end{align}
Put $x_1:= |u(a)- u(a_V)|$, $x_2:= |u(a_V)- u(b_V)|$ and $x_3:= |u(b_V)- u(b)|$.
By previous parts of the proof, we see that 
$$d_{g_0}(a, a_V) \gtrsim e^{-x_1^{-\frac{1}{M}}}, \quad d_{g_0}(a_V,b_V) \gtrsim e^{-x_2^{-\frac{1}{M}}},$$
and
$$ \quad d_{g_0}(b_V,b) \gtrsim e^{-x_3^{-\frac{1}{M}}}.$$
 This combined with (\ref{ine-sosanhdfiber}) and (\ref{ine-abaVbV}) gives
$$|a-b|^{1/4} \gtrsim \sum_{j=1}^3  e^{-x_j^{-\frac{1}{M}}}
\gtrsim \exp\left\{-\left(\dfrac{x_1+x_2+x_3}{3}\right)^{-1/M}\right\}.$$

It follows that 
$$x_1+x_2 +x_3 \lesssim \left|\log \dfrac{|a-b|}{C}\right|^{-M}\lesssim |\log |a-b||^{-M},$$
for some constant $C>0$ independent of $a$ and $b$.
The left-hand side of the last inequality is $\ge |u(a)- u(b)|$. Hence $|u(a)- u(b)| \lesssim |\log |a-b||^{-M}$. This finishes the proof.
\endproof

\section{ H\"older continuous measures}  \label{sec-capa}

Let $\eta$ be a closed smooth semi-positive $(1,1)$-form in a big (semi-positive) cohomology class. Let $K$ be a Borel subset of $X.$ The \emph{capacity} of $K$ is given by 
$$\capK_\eta(K):= \sup \big\{ \int_K \eta_\varphi^n: 0 \le \varphi \le 1, \varphi \, \text{ $\eta$-psh} \big \}.$$
The above notion was introduced in \cite{EGZ} generalizing those in \cite{Bedford_Taylor_82,Kolodziej_Acta}; see \cite{Lu-Darvas-DiNezza-logconcave,Lu-comparison-capacity} and references therein for various generalizations of capacity. 

Let $A, \beta>0$. We say that a Borel measure $\mu$ on $X$ satisfies  the condition $\mathcal{H}(\beta, A, \eta)$ if 
$$\mu(K)\leq A \left(\capK_\eta(K)\right)^{1+\beta},$$
for every Borel set $K\subset X$. 

Fix a K\"ahler form $\omega$ on $X$. Let $\mu$ be a measure on $X$. Recall that $\mu$ is said to be a H\"older continuous measure with the H\"older constant $A$ and the H\"older exponent $\gamma \in (0,1]$ if for every $\omega$-psh functions $\varphi_1,\varphi_2$ with $\int_X \varphi_j \omega^n=0$ for $j=1,2$ there holds 
$$\int_X (\varphi_1- \varphi_2) d \mu \le A \|\varphi_1- \varphi_2\|_{L^1(\omega^n)}.$$

Let $\mathcal{M}(A,\gamma)$ be the set of H\"older continuous measures with the H\"older constant $A$ and the H\"older exponent $\gamma \in (0,1]$. By \cite[Lemma 3.3]{DinhVietanhMongeampere},  a measure $\mu \in \mathcal{M}(A, \gamma)$  if there is a constant $C>0$ depending only on $A$ such that for every $\omega$-psh functions $\varphi_1,\varphi_2$, we have
\begin{align} \label{le_ine_trunhau}
	\int_X|\varphi_1 - \varphi_2| d\mu \le C \max \big \{ \|\varphi_1 -\varphi_2\|_{L^1(X)}^{\gamma},\|\varphi_1 -\varphi_2\|_{L^1(X)} \big \}.
\end{align} 
Note that if $\mu$ is a H\"older continuous measure then it follows from 
\cite[Proposition 2.4 and Proposition 4.4]{DinhVietanhMongeampere} that for every constant $\beta>0$, there exists  a constant $A_{\beta}>0$ such that
$\mu$ satisfies  the condition $\mathcal{H}(\beta, A_{\beta}, \omega)$. Therefore, by the comparison of capacities (see \cite[Theorem 3.17]{Vu_DoHS-quantitative}),
for every $\beta>0$, there exists $A_{\beta}>0$ such that
$\mu$ satisfies  the condition $\mathcal{H}(\beta, A_{\beta}, \eta)$. Alternatively, one can prove the last property by using results in \cite{Lu-Darvas-DiNezza-logconcave}.

The following lemma is a special case of \cite[Proposition 5.3]{Guedj-Zeriahi-big-stability} (replace $\varphi$ and $\psi$ by
$w_1$ and $w_2$, respectively):

\begin{lemma}\label{lem-chanduoidayulambda} Let $\eta$ be a big semi-positive closed smooth $(1,1)$-form and let
	$w_1,w_2$ be negative $\eta$-psh functions such that $w_1$ is  of full Monge-Amp\`ere mass (i.e, $\int_X \eta_{w_1}^n= \int_X \eta^n$).
	Denote  $\mu_1=(\eta+dd^cw_1)^n$. Assume that the following conditions hold
	\begin{itemize}
		\item [(i)] there exists a constant $M>0$ such that
		$$-M\leq \max\{w_1, w_2\}\leq 0;$$
		\item [(ii)] there exist constants $A, \beta>0$ such that $\mu_1$ satisfies the condition $\mathcal{H}(\beta, A, \eta)$.
	\end{itemize}
	Then, for every constant $r>0$, there exists a constant $C>0$ depending on $\omega, \eta, M, A, \beta$ and $r$ such that
	$$w_1- w_2  \ge  -C  \|w_1- w_2\|_{L^r(\mu_1)}^{\frac{\beta r}{n+\beta(n+r)}}.$$
 In particular, if $\mu_1\in\mathcal{M}(B,\alpha)$ for some $B>0$ and $0<\alpha\leq 1$ then
 for every $r, \gamma>0$, there exists a constant $C'>0$
 depending on $\omega, \eta, B, \alpha, \gamma$ and $r$ such that
 	$$w_1- w_2  \ge  -C'  \|w_1- w_2\|_{L^r(\mu_1)}^{\frac{\gamma r}{n+\gamma(n+r)}}.$$
\end{lemma}

In the next sections, we will apply Lemma \ref{lem-chanduoidayulambda} to the case where $r$ is large enough, this means the exponent $\frac{\beta r}{n+\beta(n+r)}$ is close to be 1. 

By using \eqref{le_ine_trunhau} and applying Lemma \ref{lem-chanduoidayulambda} to the case where $w_1=w$ and $w_2=0$, we obtain the following corollary:

\begin{corollary}\label{cor-boundosc}
	Let $\eta$ be a big semi-positive closed smooth $(1,1)$-form and let
	$w$ be a negative $\eta$-psh function of full Monge-Amp\`ere mass with $\sup_X w =0$. Assume that
	$(\eta+dd^cw)^n\in\mathcal{M}(B,\alpha)$ for some $B>0$ and $0<\alpha\leq 1$. Then $\|w\|_{L^\infty}\leq C$,
	where $C>0$ is a constant depending on $ \omega, \eta, B$ and $\alpha$.
\end{corollary}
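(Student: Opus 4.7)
Since $\sup_X w = 0$ forces $w \le 0$ everywhere, the claim $\|w\|_{L^\infty} \le C$ amounts to a uniform lower bound on $w$. The strategy is to invoke the second half of Proposition~\ref{pro-chanduoidayulambda} with $w_1 := w$ and $w_2 \equiv 0$. Semi-positivity of $\eta$ guarantees that the constant function $0$ is $\eta$-psh; moreover $\max\{w_1,w_2\} = \max\{w,0\} \equiv 0$, so condition (i) holds trivially (say with $M=1$). Condition (ii) is immediate from the hypothesis $\mu_1 = (\eta+dd^c w)^n \in \mathcal{M}(B,\alpha)$. Choosing $r=1$ and any fixed $\gamma > 0$, the second assertion of Proposition~\ref{pro-chanduoidayulambda} therefore delivers
$$w \ge -C'\,\|w\|_{L^1(\mu_1)}^{\theta}, \qquad \theta := \frac{\gamma}{n+\gamma(n+1)},$$
with $C'$ depending only on $\omega,\eta,B,\alpha,\gamma$. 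Hence the problem reduces to a uniform estimate of $\|w\|_{L^1(\mu_1)}$.

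For this, pick $K > 0$ such that $\eta \le K\omega$ as smooth $(1,1)$-forms; this is possible because $\eta$ is continuous and $X$ is compact. Then $v := w/K$ satisfies $\omega + dd^c v \ge K^{-1}(\eta+dd^c w) \ge 0$, so $v$ is $\omega$-psh, and $\sup_X v = 0$. The standard $L^1$-compactness of normalized $\omega$-psh functions yields a constant $c_0 = c_0(X,\omega)$ with $\|v\|_{L^1(\omega^n)} \le c_0$. Applying \eqref{le_ine_trunhau} to the pair $\varphi_1 = v$, $\varphi_2 = 0$ produces
$$\int_X |v|\,d\mu_1 \le C\max\bigl\{c_0^{\alpha},\,c_0\bigr\},$$
and multiplying by $K$ gives $\|w\|_{L^1(\mu_1)} \le C_1$ with $C_1$ depending only on $\omega,\eta,B,\alpha$. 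Substituting into the previous display yields a uniform lower bound on $w$, completing the proof.

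The essential point is the choice $w_2 \equiv 0$ in Proposition~\ref{pro-chanduoidayulambda}, which is only legitimate because $\eta$ is semi-positive; once this is in place the argument is straightforward. No genuine obstacle is expected, since both ingredients---the stability inequality of Proposition~\ref{pro-chanduoidayulambda} and the Hölder continuity of $\mu_1$ applied to a rescaling of $w$---are directly available.
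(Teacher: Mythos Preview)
Your proof is correct and follows the route the paper intends: the corollary is stated without proof precisely because it is meant to be read off from Proposition~\ref{pro-chanduoidayulambda} with $w_1=w$, $w_2=0$, and you carry this out cleanly.

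One minor remark: your separate bound on $\|w\|_{L^1(\mu_1)}$ via \eqref{le_ine_trunhau} is valid but not strictly necessary. Since the exponent $\theta = \frac{\gamma}{n+\gamma(n+1)}$ is strictly less than $1$, the inequality $\|w\|_{L^\infty}\le C'\|w\|_{L^1(\mu_1)}^\theta$ combined with the trivial estimate $\|w\|_{L^1(\mu_1)}\le \|w\|_{L^\infty}\,\mu_1(X)$ (and $\mu_1(X)=\int_X\eta^n$ by full mass) already bootstraps to $\|w\|_{L^\infty}^{1-\theta}\le C'\bigl(\int_X\eta^n\bigr)^\theta$. This gives the bound immediately without invoking the H\"older property a second time. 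Either route is fine.
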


By \cite{DinhVietanhMongeampere}, a measure $\mu$ of mass $\int_X \omega^n$ is H\"older continuous if and only if $\mu= (\ddc u+ \omega)^n$ for some H\"older continuous $\omega$-psh function $u$ on $X$. 
The following will be important for us. 

\begin{corollary}\label{cor-pushdownMAolder} Let $X_0, \ldots, X_m$ be compact complex manifolds and assume that $f_j:X_j \to X_{j-1}$ is the blow up along a smooth submanifold $V_{j-1} \subset X_{j-1}$ in $X_{j-1}$ for $1 \le j \le m$. Denote $f:= f_m \circ \cdots \circ f_0: X_m \to X_0$.  Suppose that $\mu$ is a H\"older continuous measures on $X_m$. Then $f_* \mu$ is also H\"older continuous. 
\end{corollary}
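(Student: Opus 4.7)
The plan is to reduce to the case of a single blow-up $f : X_1 \to X_0$ by induction on $m$, using the factorization $f_* \mu = (f_1)_* \bigl( (f_2 \circ \cdots \circ f_m)_* \mu \bigr)$ and applying the single-blow-up case to each factor in turn. For the base case, write $f : X_1 \to X_0$ for the blow-up along a smooth submanifold $V \subset X_0$ of codimension $k$; since $k=1$ makes $f$ an isomorphism, I may assume $k \ge 2$. Fix K\"ahler forms $\omega_0$ on $X_0$ and $\omega_1$ on $X_1$, and a constant $C_1 \ge 1$ with $f^* \omega_0 \le C_1 \omega_1$. By the characterization \eqref{le_ine_trunhau} of H\"older continuity, it suffices to prove that for some $\gamma' > 0$,
$$\int_{X_0} |\psi_1 - \psi_2| \, d(f_* \mu) \le C \,\|\psi_1 - \psi_2\|_{L^1(\omega_0^n)}^{\gamma'}$$
holds for every $\omega_0$-psh $\psi_1, \psi_2$ with $\sup \psi_j = 0$, at least in the regime where $\|\psi_1 - \psi_2\|_{L^1(\omega_0^n)}$ is small (the left-hand side being uniformly bounded on the normalized class, the large-$L^1$ regime is automatic).

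First I would pull back. Since $\psi_j \circ f$ is $f^*\omega_0$-psh, the function $(\psi_j \circ f)/C_1$ is $\omega_1$-psh, and applying the H\"older continuity of $\mu$ to these two $\omega_1$-psh functions gives
$$\int_{X_0} |\psi_1 - \psi_2| \, d(f_* \mu) = \int_{X_1} |\psi_1 \circ f - \psi_2 \circ f| \, d\mu \le C' \,\|\psi_1 \circ f - \psi_2 \circ f\|_{L^1(\omega_1^n)}^\gamma.$$
Rewriting the right-hand $L^1$-norm as $\int_{X_0} |\psi_1 - \psi_2| \, d(f_* \omega_1^n)$ reduces the problem to comparing $f_* \omega_1^n$ with $\omega_0^n$.

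The key geometric input is that $f_* \omega_1^n = g\, \omega_0^n$ with $g \in L^p(\omega_0^n)$ for some $p > 1$. To see this, I would compute in local coordinates where $V = \{y_1 = \cdots = y_k = 0\}$: on a chart of the blow-up, $f$ takes the form $f(x_1, t_2, \dots, t_k, y_{k+1}, \dots, y_n) = (x_1, x_1 t_2, \dots, x_1 t_k, y_{k+1}, \dots, y_n)$, with complex Jacobian $x_1^{k-1}$. The density $g$ then behaves like $|y_1|^{-2(k-1)}$ near $V$, which lies in $L^p_{\mathrm{loc}}$ for every $p < k/(k-1)$; since $k \ge 2$, some such $p > 1$ exists. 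H\"older's inequality gives
$$\int_{X_0} |\psi_1 - \psi_2| \, d(f_* \omega_1^n) \le \|g\|_{L^p(\omega_0^n)} \,\|\psi_1 - \psi_2\|_{L^{p'}(\omega_0^n)}.$$

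To finish, I would invoke Skoda's uniform integrability: for every finite $Q$ there is $C_Q$ independent of $\psi$ such that every $\omega_0$-psh $\psi$ with $\sup \psi = 0$ satisfies $\|\psi\|_{L^Q(\omega_0^n)} \le C_Q$. Lyapunov interpolation between $L^1$ and $L^Q$ with $Q$ large then provides $\alpha > 0$ with $\|\psi_1 - \psi_2\|_{L^{p'}(\omega_0^n)} \le C\,\|\psi_1 - \psi_2\|_{L^1(\omega_0^n)}^\alpha$. Chaining all the estimates yields the H\"older continuity of $f_*\mu$ with exponent $\gamma' = \alpha \gamma$. The main technical step I expect is the local computation establishing $f_* \omega_1^n \in L^p$ for some $p > 1$; the remaining pieces (H\"older's inequality, Skoda integrability, and Lyapunov interpolation) are standard.
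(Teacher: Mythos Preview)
Your proof is correct and follows essentially the same route as the paper's: reduce to a single blow-up by induction, pull back $\omega_0$-psh functions to (rescaled) $\omega_1$-psh functions, apply the H\"older continuity of $\mu$ on $X_1$, then use that $f_*\omega_1^n = g\,\omega_0^n$ with $g \in L^p$ for some $p>1$ together with H\"older's inequality and an $L^{p'}$--$L^1$ interpolation for quasi-psh functions. The paper states the $g \in L^p$ fact without details and cites \cite[Lemma 2.2]{DinhVietanhMongeampere} for the interpolation (obtaining exponent $1/(2q)$), whereas you sketch the Jacobian computation and derive the interpolation from Skoda integrability plus Lyapunov; both are equivalent for the purpose at hand.
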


\proof By induction, it suffices to prove the desired assertion for $m=1$. Let $u_1,u_2$ be $\omega_0$-psh functions on $X_0$ for $j=1,2$, where $\omega_0$ is a K\"ahler form on $X_0$. Put $u'_j:= f_1^* u_j$. Let $\omega_1$ be  a K\"ahler form on $X_1$. Using H\"older continuity of $\mu$, we obtain
$$\|u_1- u_2\|_{L^1((f_1)_* \mu)}= \|u'_1- u'_2\|_{L^1(\mu)} \lesssim \|u'_1- u'_2\|^\gamma_{L^1(\omega_1^n)}+ \|u'_1- u'_2\|_{L^1(\omega_1^n)}.$$  
Standard computations using local coordinates for blowups show that there exists a function $g \in L^p(\omega_0^n)$ for some constant $p>1$ satisfying $(f_1)_* \omega_1^n = g  \omega_0^n$. Hence 
$$\|u'_1- u'_2\|_{L^1(\omega_1^n)}= \int_{X_0} |u_1- u_2| g \omega_0^n \lesssim \|u_1- u_2\|_{L^q(\omega_0^n)}$$
where $1/q+ 1/p =1$. By \cite[Lemma 2.2]{DinhVietanhMongeampere}, one has 
$$\|u_1- u_2\|_{L^q(\omega_0^n)} \lesssim \|u_1- u_2\|_{L^1(\omega_0^n)}^{1/(2q)}.$$
Hence $(f_1)_*\mu$ is H\"older continuous. 
\endproof

\section{Regularization of psh functions}\label{sec.appr}

\subsection{$L^2$-estimates}

We recall first the $L^2$-estimates for $\bar \partial$ and discuss some of its variants. 

\begin{theorem}\label{the-dbar0} (see \cite[Corollary 5.3]{Demailly_analyticmethod}) Let $(X,\omega)$ be a compact K\"ahler manifold. Let $\epsilon>0$ be a constant. Let $L$ be a holomorphic line bundle on $X$ together with a singular Hermitian metric $h$ satisfying 
\begin{align*}
 c_1(L,h) \ge \epsilon \omega.
 \end{align*}
Then for every $g \in L^2_{n,1}(X, L)$ with $\bar \partial g=0$, there exists $u \in L^2_{n,0}(X,L)$ such that $\bar \partial u =g$, and 
$$\int_X |u|^2_{h,\omega} \omega^n \le \epsilon^{-1} \int_X |g|^2_{h,\omega} \omega^n,$$
where $|g(x)|_{h,\omega}$ denotes the norm of $g$ with respect to the norm induced by the Hermitian metric $h$ on $L$ and the Riemannian metric on $X$  associated to $\omega$. 
\end{theorem}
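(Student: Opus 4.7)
The plan is to prove this standard H\"ormander $L^2$-estimate in two stages: an a priori estimate in the smooth positively curved case, obtained from the Bochner-Kodaira-Nakano identity combined with a Hahn-Banach duality trick, followed by an approximation argument to descend to the singular metric $h$.

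First I would assume that $h$ is smooth with $ic_1(L,h)\ge\epsilon\omega$ everywhere. For any smooth $L$-valued $(n,1)$-form $v$, the Bochner-Kodaira-Nakano identity on the K\"ahler manifold $(X,\omega)$ yields
$$\|\bar\partial v\|_h^2+\|\bar\partial^{*}v\|_h^2\ \ge\ \int_X\bigl\langle[ic_1(L,h),\Lambda_\omega]v,v\bigr\rangle_h\,\omega^n.$$
In bidegree $(n,1)$ the commutator $[ic_1(L,h),\Lambda_\omega]$ acts as multiplication by an eigenvalue of $ic_1(L,h)$ relative to $\omega$, so the hypothesis gives $\|v\|_h^2\le\epsilon^{-1}(\|\bar\partial v\|_h^2+\|\bar\partial^{*}v\|_h^2)$. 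For any $v$ in the domain of $\bar\partial^{*}$ I would decompose $v=v_1+v_2$ along $\ker\bar\partial\oplus(\ker\bar\partial)^{\perp}$; testing against $\bar\partial\phi$ shows $\bar\partial^{*}v_2=0$ (since $v_2\in(\ker\bar\partial)^{\perp}$), while $\langle g,v_2\rangle=0$ because $g\in\ker\bar\partial$ is orthogonal to $v_2$. Applying the a priori estimate to $v_1\in\ker\bar\partial$ then gives
$$|\langle g,v\rangle|^2=|\langle g,v_1\rangle|^2\le\|g\|_h^2\|v_1\|_h^2\le\epsilon^{-1}\|g\|_h^2\|\bar\partial^{*}v_1\|_h^2=\epsilon^{-1}\|g\|_h^2\|\bar\partial^{*}v\|_h^2.$$
Thus $\bar\partial^{*}v\mapsto\langle g,v\rangle$ is a bounded linear functional of norm at most $\epsilon^{-1/2}\|g\|_h$ on the range of $\bar\partial^{*}$. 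The Hahn-Banach and Riesz representation theorems then produce $u\in L^2_{n,0}(X,L)$ with $\|u\|_h^2\le\epsilon^{-1}\|g\|_h^2$ and $\bar\partial u=g$ distributionally.

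To handle the singular metric $h$, I would write $h=h_0 e^{-2\varphi}$ with $h_0$ a smooth reference metric and $\varphi$ quasi-psh on $X$. Invoking Demailly's analytic regularization (Bergman-kernel smoothing via Ohsawa-Takegoshi) produces a decreasing sequence of smooth functions $\varphi_\nu\searrow\varphi$ for which the smooth metrics $h_\nu:=h_0 e^{-2\varphi_\nu}$ satisfy $ic_1(L,h_\nu)\ge(\epsilon-\delta_\nu)\omega$ with $\delta_\nu\downarrow 0$. Applying the smooth case to each $h_\nu$ yields $u_\nu\in L^2_{n,0}(X,L)$ with $\bar\partial u_\nu=g$ and
$$\int_X|u_\nu|_{h_\nu}^2\,\omega^n\ \le\ (\epsilon-\delta_\nu)^{-1}\int_X|g|_{h_\nu}^2\,\omega^n\ \le\ (\epsilon-\delta_\nu)^{-1}\int_X|g|_h^2\,\omega^n,$$
where the last inequality uses $\varphi_\nu\ge\varphi$. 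Extracting a weak-$L^2(h_0)$ limit $u$ of $\{u_\nu\}$ and applying Fatou's lemma (using $h_\nu\nearrow h$ pointwise wherever $h$ is finite) produces the desired $u$.

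The main obstacle is the singular-to-smooth approximation: one must exhibit a decreasing smooth approximation $\varphi_\nu\searrow\varphi$ whose curvature loss $\delta_\nu$ tends to zero, and one must make sure the $L^2$-bound against the singular weight $h$ survives passage to the weak limit through the polar locus of $\varphi$. Demailly's regularization theorem (built on Ohsawa-Takegoshi extension) together with a routine Banach-Alaoglu extraction jointly take care of these issues, but each requires non-trivial input beyond the smooth Bochner-Kodaira-Nakano calculation.
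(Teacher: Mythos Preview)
The paper does not prove this statement; it is quoted verbatim from Demailly's notes and used as a black box. So there is nothing to compare against, and the question is only whether your outline stands on its own.

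Your treatment of the smooth case is the standard H\"ormander argument and is fine (the description of $[ic_1(L,h),\Lambda_\omega]$ in bidegree $(n,1)$ is slightly loose---it is bounded below by the smallest eigenvalue of the curvature relative to $\omega$, not literally ``multiplication by an eigenvalue''---but the inequality $\ge\epsilon$ you need follows).

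The gap is in the passage to singular $h$. You assert that Demailly's Bergman-kernel regularization produces \emph{smooth} $\varphi_\nu\searrow\varphi$ with curvature loss $\delta_\nu\to 0$. Neither clause is correct as stated. The Bergman-kernel approximation yields potentials with \emph{analytic singularities}, not smooth ones; and once you smooth further (via Demailly's Chern-connection flow or Richberg gluing), the curvature loss is bounded below by the Lelong numbers of $\varphi$ and in general does \emph{not} tend to zero. Since the whole point of allowing singular $h$ is to permit positive Lelong numbers, this is a genuine obstruction, not a technicality.

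Two standard fixes: (a) equip $X\setminus\{\varphi=-\infty\}$ with a complete K\"ahler metric, run the smooth H\"ormander estimate there, and use the $L^2$ bound to extend the solution across the polar set; or (b) keep the analytic-singularity approximants $\varphi_\nu$, solve on the Zariski-open complement of their pole sets (which are complete K\"ahler), and again extend by $L^2$. Either route gives the stated estimate, but both require more than the one-line appeal to regularization you have written.
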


We deduce from the above result the following more or less standard consequence. 

\begin{theorem}\label{the-dbar} Let $(X,\omega)$ be a compact K\"ahler manifold. Let $\epsilon>0$ be a constant. Let $K^*_X$ be the dual of the canonical line bundle, and  let $h_{K^*_X}$ denote the metric induced by $\omega$ on $K^*_X$. Let $L$ be a holomorphic line bundle on $X$ together with a singular Hermitian metric $h$. Assume that there exists a singular metric $\tilde{h}_{K^*_X}$ on $K^*_X$ so that  
\begin{align*}
c_1(L,h)+ c_1(K^*_X, \tilde{h}_{K^*_X}) \ge \epsilon \omega.
 \end{align*}
Then for every $g \in L^2_{0,1}(X, L)$ with $\bar \partial g=0$, there exists $u \in L^2_{0,0}(X,L)$ such that $\bar \partial u =g$, and 
$$\int_X |u|^2_h e^{-2 \vartheta} \omega^n \le \epsilon^{-1} \int_X |g|^2_{h,\omega}  e^{-2 \vartheta}\omega^n,$$
where $\vartheta$ is a quasi-psh function defined by $\tilde{h}_{K^*_X}= e^{-2 \vartheta} h_{K^*_X}$.
\end{theorem}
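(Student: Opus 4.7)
The plan is to reduce to Theorem \ref{the-dbar0} by the standard trick of tensoring with $K_X^*$. Set $E := L \otimes K_X^*$ and equip it with the singular Hermitian metric $h_E := h \otimes \tilde h_{K^*_X}$. By additivity of the Chern curvature,
$$c_1(E, h_E) = c_1(L, h) + c_1(K^*_X, \tilde h_{K^*_X}) \ge \epsilon \omega,$$
so Theorem \ref{the-dbar0} applies to the line bundle $(E, h_E)$.

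The second step is to use the canonical bundle isomorphism
$$\Lambda^{0,q} T^*X \otimes L \;\simeq\; K_X \otimes \Lambda^{0,q} T^*X \otimes L \otimes K^*_X = \Lambda^{n,q} T^*X \otimes E,$$
coming from the trivial identification in $K_X \otimes K^*_X$, to identify $(0,q)$-forms with values in $L$ with $(n,q)$-forms with values in $E$. This identification commutes with $\bar\partial$, so the given $g \in L^2_{0,1}(X,L)$ corresponds to some $\tilde g \in L^2_{n,1}(X,E)$ with $\bar\partial \tilde g = 0$, and a solution $\tilde u \in L^2_{n,0}(X,E)$ of $\bar\partial \tilde u = \tilde g$ supplied by Theorem \ref{the-dbar0} yields a section $u \in L^2_{0,0}(X,L)$ with $\bar\partial u = g$.

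What remains is to match the $L^2$ norms, and this is where the specific choice of $h_{K^*_X}$ as the metric \emph{induced by $\omega$} enters. I would establish the pointwise identity
$$|\tilde v|^2_{h_E, \omega} \,=\, e^{-2\vartheta}\, |v|^2_{h,\omega}$$
for any $(0,q)$-form $v$ with values in $L$ and its image $\tilde v$ under the above isomorphism. Locally, working in a holomorphic chart with frame $s$ of $L$ and the frame $\tau$ of $K^*_X$ dual to $dz_1 \wedge \cdots \wedge dz_n$, the orthogonal splitting $\Lambda^{n,q} = \Lambda^{n,0} \otimes \Lambda^{0,q}$ of Hermitian bundles shows that the factor $|dz_1 \wedge \cdots \wedge dz_n|^2_\omega$ picked up by viewing $v$ as an $(n,q)$-form cancels precisely with $|\tau|^2_{h_{K^*_X}} = |dz_1 \wedge \cdots \wedge dz_n|^{-2}_\omega$ (this cancellation is the reason for insisting that $h_{K^*_X}$ be induced by $\omega$), while replacing $h_{K^*_X}$ by $\tilde h_{K^*_X}$ produces the weight $e^{-2\vartheta}$.

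Plugging these pointwise identities into the $L^2$ estimate of Theorem \ref{the-dbar0} applied to $(E, h_E)$ then delivers the claimed inequality. No genuine obstacle arises; the entire content is the $L^2$-estimate of Theorem \ref{the-dbar0}, and what is left is the bookkeeping of the metric comparison, which has been arranged so that everything collapses to the single factor $e^{-2\vartheta}$ on both sides.
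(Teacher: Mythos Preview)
Your proof is correct and follows essentially the same approach as the paper: tensor $L$ with $K^*_X$ to form $E=L\otimes K^*_X$ with metric $h\otimes\tilde h_{K^*_X}$, use the natural isometry $\Lambda^{0,q}T^*X\otimes L\simeq\Lambda^{n,q}T^*X\otimes E$ (which commutes with $\bar\partial$) to convert the problem into one to which Theorem~\ref{the-dbar0} applies, and track the metric comparison to obtain the weight $e^{-2\vartheta}$. The paper's argument is slightly terser but identical in substance.
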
 

\proof Set $L':= L \otimes K^*_X$. Thus $L= L' \otimes K_X$.  Let $h'$ be the singular Hermitian metric on $L'$ given by $h'= h \otimes \tilde{h}_{K^*_X}$. For every $0\le q \le n,$ we have a natural isometry 
$$\Psi_q: \Lambda^{0,q}(T^*X) \otimes L \to  \Lambda^{n,q}(T^*X) \otimes L',$$
e.g., see the proof of \cite[Corollary 4.3]{Coman-Marinescu-L2-orbifold}), where we use the metric $h$ on $L$, and $h \otimes h_{K_X^*}$ on $L'$. The map $\Psi$ commutes with $\partial, \bar \partial$ operators.  Thus $\Psi_1(g) \in L^2_{n,1}(X, L')$ with $\bar \partial \Psi_1(g)=0$. Since $\Psi_1$ is an isometry, one gets 
$$|\Psi_1(g)|^2_{h'}=|\Psi_1(g)|^2_{h\otimes h_{K_X^*}} e^{-2 \vartheta}=   |g|^2_h e^{-2\vartheta}.$$
 The desired assertion now follows from Theorem \ref{the-dbar0}. 
\endproof

In particular we obtain the following.


\begin{corollary}\label{cor-dbar} Let $(X, \omega)$ be a compact K\"ahler manifold so that the Chern class of $K^*_X$ contains a closed positive $(1,1)$-current of bounded potentials, \emph{i.e,} there exists a bounded $\eta_\omega$-psh function $\vartheta$ on $X$, where $\eta_\omega$ is the Chern form of the metric on $K^*_X$ induced  by $\omega$.  Let $L$ be a holomorphic line bundle on $X$ together with a singular Hermitian metric $h$ such that 
\begin{align*}
 c_1(L,h) \ge \epsilon \omega.
 \end{align*}
Then for every $g \in L^2_{0,1}(X, L)$ with $\bar \partial g=0$, there exists $u \in L^2_{0,0}(X,L)$ such that $\bar \partial u =g$, and 
$$\int_X |u|^2_h \omega^n \le  \frac{e^{4 \|\vartheta\|_{L^\infty}}}{\epsilon} \int_X |g|^2_{h,\omega} \omega^n.$$
\end{corollary}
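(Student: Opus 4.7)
The strategy is to deduce Corollary \ref{cor-dbar} directly from Theorem \ref{the-dbar} by a judicious choice of the singular Hermitian metric $\tilde{h}_{K^*_X}$ on $K^*_X$, and then eliminate the weight $e^{-2\vartheta}$ using the two-sided bound on $\vartheta$. No new analytic input beyond Theorem \ref{the-dbar} should be needed; the entire content is bookkeeping.

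\textbf{Step 1: Construction of $\tilde{h}_{K^*_X}$.} I would set
\[
\tilde{h}_{K^*_X} := e^{-2\vartheta}\, h_{K^*_X},
\]
where $h_{K^*_X}$ is the smooth reference metric induced by $\omega$. Since by assumption $\vartheta$ is $\eta_\omega$-psh with $\eta_\omega = c_1(K^*_X, h_{K^*_X})$, the Lelong--Poincar\'e formula yields
\[
c_1(K^*_X, \tilde{h}_{K^*_X}) = \eta_\omega + dd^c\vartheta \ge 0
\]
in the sense of currents. Combined with the curvature hypothesis $c_1(L,h) \ge \epsilon\omega$, this gives
\[
c_1(L,h) + c_1(K^*_X, \tilde{h}_{K^*_X}) \ge \epsilon \omega,
\]
so the assumption of Theorem \ref{the-dbar} is satisfied for this choice of $\tilde{h}_{K^*_X}$.

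\textbf{Step 2: Weighted solution via Theorem \ref{the-dbar}.} Applying Theorem \ref{the-dbar} to any given $\bar\partial$-closed $g \in L^2_{0,1}(X,L)$, one obtains $u \in L^2_{0,0}(X,L)$ with $\bar\partial u = g$ and
\[
\int_X |u|^2_h \, e^{-2\vartheta}\, \omega^n \;\le\; \epsilon^{-1}\int_X |g|^2_h \, e^{-2\vartheta}\, \omega^n.
\]
Note that finiteness of the right-hand side (needed for Theorem \ref{the-dbar} to be meaningful) follows from boundedness of $\vartheta$ together with the assumption $g \in L^2_{0,1}(X,L)$.

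\textbf{Step 3: Removing the weight.} Since $\|\vartheta\|_{L^\infty} < \infty$, the pointwise bounds
\[
e^{-2\|\vartheta\|_{L^\infty}} \le e^{-2\vartheta} \le e^{2\|\vartheta\|_{L^\infty}}
\]
hold on $X$. Multiplying the inequality of Step 2 by $e^{2\|\vartheta\|_{L^\infty}}$ on the left and absorbing another factor of $e^{2\|\vartheta\|_{L^\infty}}$ on the right, I obtain
\[
\int_X |u|^2_h\, \omega^n \;\le\; e^{2\|\vartheta\|_{L^\infty}} \int_X |u|^2_h\, e^{-2\vartheta}\,\omega^n \;\le\; \frac{e^{4\|\vartheta\|_{L^\infty}}}{\epsilon}\int_X |g|^2_h\, \omega^n,
\]
which is the claimed estimate. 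There is no real obstacle here: the substance is packaged in Theorem \ref{the-dbar} (and ultimately in Demailly's $L^2$-theory), and the corollary merely trades the weighted norm on $L$ for an unweighted one at the cost of the exponential factor in $\|\vartheta\|_{L^\infty}$.
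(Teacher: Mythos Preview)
Your proof is correct and is precisely the argument the paper has in mind: the corollary is stated immediately after Theorem~\ref{the-dbar} with the phrase ``In particular we obtain the following'' and no explicit proof, so your three steps (choose $\tilde{h}_{K^*_X}=e^{-2\vartheta}h_{K^*_X}$, apply Theorem~\ref{the-dbar}, then strip the weight via the $L^\infty$ bound on $\vartheta$) constitute the intended derivation.
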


We recall now a special case of the Ohsawa-Takegoshi extension theorem (see \cite[Theroem 13.6]{Demailly_analyticmethod}). 

\begin{theorem} \label{th-OKYbang0} Let $X$ be a weakly pseudoconvex $n$-dimensional manifold with a K\"ahler metric $\omega$. Let $y$ be a point in $X$. Let $(L,h)$ be a line bundle with a smooth positively curved Hermitian metric $h$ on $X$ and let $E$ be the trivial holomorphic vector bundle of rank $n$ equipped with the trivial Hermitian metric such that there exists  a global section $s$ of $E$ with $y=\{s=0\}$ so that $\Lambda^n d s(y) \not =0$ and $|s| \le e^{-1}$. Then for every $(n,0)$-form $f$ with values in $L$ at $y$, there exist a $\bar \partial$-closed  $(n,0)$-form 
$F$ with values in $L$ on $X$ such that $F(y)=f(y)$ and
$$\int_X \frac{|F|^2_{h,\omega}}{|s|^{2n} (-\log |s|)^2} \omega^n \le C_n \frac{|f|^2_{h,\omega}}{|\Lambda^n d s(y)|_{\omega}^2},$$
where $C_n$ is a numerical constant depending only on $n$.
\end{theorem}

We note that since $E$ and its Hermitian metric are trivial, the curvature of the metric of $E$ vanishes everywhere, and $s$ is nothing but a tuple of $n$ holomorphic functions on $X$.

\begin{corollary} \label{cor-OKYbang0truocda} Let $X, \omega,L,h,E,y,s$ be as in Theorem \ref{th-OKYbang0}. Assume furthermore that $\Lambda^n d s(y) \not =0$ for every $y\in X\backslash N$, where $N$ is an analytic set in $X$.  Then for every section $f$ of $L$ at $y$, there exists  
$F \in H^0(X\backslash N,L)$ such that $F(y)=f(y)$ and
$$\int_X |F|^2_{h}|\Lambda^n d s(y)|_{\omega}^2 \omega^n \le C_n  |f|^2_{h},$$
where $C_n$ is a numerical constant depending only on $n$.
\end{corollary}

\proof Applying Theorem \ref{th-OKYbang0} to $f_1:= f\Lambda^n d s$, we find a $\bar\partial$-closed $(n,0)$-form $F_1$ with values in $L$ such that 
$$\int_X \frac{|F|^2_{h,\omega}}{|s|^{2n} (-\log |s|)^2} \omega^n \le C_n |f|^2_{h}.$$
Put $F:= F_1/ \Lambda^n d s$. By the hypothesis on $s$ we see that $F$ is holomorphic on $X \backslash N$ and satisfies the desired inequality. 
\endproof




\subsection{Analytic regularisation of psh functions} \label{subsec-regu}

Let $(X, \omega)$ be a compact K\"ahler manifold. Let $L$ be a big and semi-ample line bundle on $X$ (hence $X$ is forced to be projective by Moishezon's theorem). Let $\rho, \Phi_{k_L},X'$ be as in Introduction. From now on  we assume the following hypothesis:\\

\textbf{(H) \quad}  The Chern class of $K^*_X$ contains a closed positive $(1,1)$-current of bounded potentials, \emph{i.e,} there exists a bounded $\eta_\omega$-psh function $\vartheta$ on $X$, where $\eta_\omega$ is the Chern form of the Hermitian metric on $K^*_X$ induced  by $\omega$. Furthermore, $X'$ has only isolated singularities. 
\\

In particular, this assumption is fulfilled if $-c_1(K_X)$ is semi-positive and $n=2$.  
Let 
$$\theta:= \frac{1}{2 k_L}\ddc \log \sum_{j=1}^{d_{k_L}} |s_j|^2$$
  which is smooth closed form in $c_1(L)$.  Hence $\theta$ is the pull-back of the Fubini-Study form in $\C\P^{d_{k_L}-1}$ under $\Phi_{k_L}$.  Let $h_0$ be a smooth Hermitian metric on $L$ with $c_1(L,h_0)= \theta$.

Fix a smooth Riemannian metric on $X$ and let $\B(x,r)$ be the ball of radius $r$ with respect to this metric. Let $r_X>0$ be a constant so that for every $x \in X$  the closure of the  ball $\B(x,r_X)$ is contained in a local chart of $X$ which is biholomorphic to a ball in $\C^n$.

\begin{lemma}\label{le-demailly-prescribed} There exist  constants $C_0 >0$, $r_0>0$  small enough such that for every $y \in X$, there exist global negative $\theta$-psh functions $u_y$ on $X$ so that 
$$u_{y}(x) \le  \log |y-x|+ C_0$$
on $\Phi_{k_L}^{-1}\big(\B(y', r_0)\big)$ where $y':= \Phi_{k_L}(y)$ and by abuse of notation, for every $r>0$, we denote by $\B(y', r)$ the ball of radius $r$ centered at $y' \in \C\P^{d_{k_L}-1}$ with respect to a fixed smooth metric on $\C\P^{d_{k_L}-1}$. Furthermore, for every constant $\epsilon>0$, we have  
\begin{align} \label{ine-uysualaithoi}
u_y \ge \log \epsilon-C
\end{align}
outside $\Phi_{k_L}^{-1}\big(\B(y', \epsilon)\big)$ for some constant $C$ independent of $y,\epsilon$. 
\end{lemma}

\proof
Let $y':= \Phi_{k_L}(y)$ and let $v_y(z)$ be a $\omega_{FS}$-psh function on $\C\P^{d_{k_L}-1}$ given by $$v_y(z):= \log |z-y'|$$ where we use homogeneous coordinates for $z,y'$, and $\omega_{FS}$ is the Fubini-Study form on $\C\P^{d_{k_L}-1}$. Thus, for every $\epsilon>0$, there holds $v_y \ge C \log \epsilon$ outside $\B(y',\epsilon)$ for some constant $C$ independent of $y, \epsilon$.  

Since $\Phi_{k_L}^*\omega_{FS}=\theta$, we infer  $u_y:= \Phi_{k_L}^* v_y$ and $\tilde{u}_y:=\Phi_{k_L}^* \tilde{v}_y$ are $\theta$-psh and satisfies that
$$u_{y}(x) = \log |\Phi_{k_L}(y)- \Phi_{k_L}(x)| \le \log |y-x|+C_0$$
on   $\Phi_{k_L}^{-1}\big(\B(y', r_0)\big)$. Moreover one also has (\ref{ine-uysualaithoi})  because of the smoothness of $v_y$ outside $y'$. 
\endproof

We denote by $Sing(X')$ the singular locus of $X'$. By the hypothesis, the set $Sing(X')$ is finite. 
Let $\B_r(y)$ be the ball of radius $r$ centered at $y$ in $\C^{k_L-1}$. If $y=0$, then we write $\B_r$ for $\B_r(y)$.  Put $N':= \Phi_{k_L}(N)$ which is an analytic subset in $X'$. 
Since $Sing(X')$ is finite, we can find open subsets $U'_1, \ldots, U'_l$  in $\C\P^{d_{k_L}-1}$ such that the following properties hold:

(i) $U'_j \Subset U''_j$ which is biholomorphic to the ball $\B_3$ in $\C^{d_{k_L}-1}$ under a map $\Psi_j$ for every $1 \le j \le l$ and $U_j$ is biholomorphic to $\B_2$ under $\Psi_j$. Furthermore, if $U'_j\cap Sing(X')$ is non empty, then $U'_j \cap Sing(X')$ is contained in $\B_1 \Subset U'_j$,

(ii) $X'\subset \cup_{j=1}^l U'_j$,


(iii) There is a hyperplane $H_j$ on $\C\P^{d_{k_L}-1}$ such that $H_j$ does not intersect $U''_j$ for every $1 \le l  \le j$.

By our choice of $U'_j$, we see that $U'_j$ is hyperconvex (hence weakly pseudoconvex), i.e, there is a smooth psh function $w_j$ on $U'_j$ such that $\{w_j < c\}$ is relatively compact in $U'_j$ for every constant $c<0$ and every $j$. Let 
$$U_j:= \Phi_{k_L}^{-1}(U'_j).$$ 
 Note that $U_j$ is also hyperconvex  and $L$ is trivial over $U_j$ because $L= \Phi_{k_L}^*\mathcal{O}(1)$ and $\mathcal{O}(1)$ is trivial over $X' \backslash H_j$.

\begin{lemma}\label{le-demailly-prescribed0} 
Let $h:= h_0e^{-2 \phi}$ be a singular positively curved metric on $L$ such that $\phi$ is smooth outside $N$. Fix $1 \le j \le l$.  Let $y \in U_j \backslash N$. Let $e$ be a local holomorphic frame of $L$ over $U_j$. Then for every $a \in \C$, there exists a section $f \in H^0(U_j, L)$ such that  $f(y)= a e(y)$ and 
$$\int_{U_j} |f|^2_{h_0} e^{-2 \phi} \theta^n \le C |a|^2 |e(y)|_{h_0}^2 e^{- 2 \phi(y)}.$$
where $C>0$ is a constant independent of $y$ and $a$. 
\end{lemma}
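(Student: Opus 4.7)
The statement is a local Ohsawa--Takegoshi-type $L^2$-extension on the hyperconvex domain $U_j$: prescribe the value $ae_j(y)$ at $y$ and extend to a holomorphic section of $L$ on $U_j$ with $L^2$ control in terms of the weight of $h$ at $y$. I would follow the classical cutoff-plus-$\bar\partial$-correction approach, using an auxiliary log singularity at $y$ to force the correction to vanish at $y$.

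First, I pick $\chi \in C^\infty_c(V_j)$ equal to $1$ on a small ball $\B(y, r_1)$, and set $\tilde f := a\chi e_j$. Then $\tilde f$ is a smooth $L$-valued function on $U_j$ with $\tilde f(y) = a e_j(y)$, and $g := \bar\partial \tilde f = a(\bar\partial\chi) e_j$ is a smooth $\bar\partial$-closed $(0,1)$-form supported in the annulus where $\chi$ is neither $0$ nor $1$; in particular $g$ vanishes in a neighborhood of $y$. Producing $u$ with $\bar\partial u = g$ on $U_j$, $u(y)=0$, and an appropriate weighted $L^2$ bound will give $f := \tilde f - u$ as the desired section.

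To force $u(y)=0$, I invoke Lemma \ref{le-demailly-prescribed} to fix the global negative $\theta$-psh function $u_y$ with $u_y(x) \le \lambda\log|x-y| + \lambda^{-1}$ on $\B(y, r_0)$ and $u_y \ge -C_\epsilon$ outside $\B(y,\epsilon)$. For a constant $\alpha > n/\lambda$, consider the singular Hermitian metric $\tilde h := h \cdot e^{-2\alpha u_y}$ on $L$; near $y$ the factor $e^{-2\alpha u_y}$ dominates $|x-y|^{-2\alpha\lambda}$ with $2\alpha\lambda > 2n$, so any $L$-valued function holomorphic in a neighborhood of $y$ and square-integrable against $\tilde h$ there must vanish at $y$. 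Using hypothesis (H), applied as in Theorem \ref{the-dbar} to pass from the usual $(n,0)$-valued H\"ormander estimate to a $(0,0)$-valued statement for $L$, and exploiting the complete K\"ahler metric that exists on the hyperconvex open set $U_j$, I solve $\bar\partial u = g$ on $U_j$ with
\[
\int_{U_j} |u|^2_{h_0}\, e^{-2\phi - 2\alpha u_y}\, \omega^n \le C \int_{U_j} |g|^2_{h_0}\, e^{-2\phi - 2\alpha u_y}\, \omega^n,
\]
strict positivity of the effective weight being recovered if necessary by perturbing with a small multiple of a bounded psh exhaustion from the hyperconvexity of $U_j$ and then letting the perturbation tend to $0$. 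Since $\bar\partial\chi$ is supported where $u_y$ is bounded below, the right-hand side is finite; the finiteness of the left-hand side forces $u(y)=0$, so $f := \tilde f - u$ lies in $H^0(U_j, L)$ and satisfies $f(y) = ae_j(y)$.

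The main obstacle I anticipate is not solving the $\bar\partial$-equation but recovering the \emph{pointwise} quantity $e^{-2\phi(y)}$ on the right-hand side: naively the H\"ormander estimate gives only a bound of the form $|a|^2\sup_{V_j}|e_j|^2_{h_0}\int_{\mathrm{supp}\,\bar\partial\chi} e^{-2\phi}\,\omega^n$, and this integral can exceed $e^{-2\phi(y)}$ arbitrarily when $\phi$ has strong negative singularities close to (but away from) $y$. The classical Ohsawa--Takegoshi remedy is to run the estimate not with $\phi$ itself but with a family $\phi_\varepsilon$ of smooth psh regularizations from above, combined with Berndtsson's twist to absorb the discrepancy into the log singularity at $y$, and then to pass to the limit using monotone convergence and lower semicontinuity; the delicate point is checking that the resulting constant $C$ depends only on $U_j, \omega, h_0$ and the data of Lemma \ref{le-demailly-prescribed}, and in particular is independent of $y$ and $a$. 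Combining the so-obtained bound on $u$ with the trivial bound $|\tilde f|^2_{h_0} \le |a|^2 \chi^2 |e_j|^2_{h_0}$ (handled by the same limit procedure) yields the asserted estimate.
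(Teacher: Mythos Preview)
The paper's proof is a single sentence: since $U_j$ is weakly pseudoconvex, apply the Ohsawa--Takegoshi $L^2$-extension theorem (Demailly, Theorem~13.6) with $Y=\{y\}$ and the trivial rank-$n$ bundle $E$, passing from $(n,0)$-forms to $(0,0)$-forms via the trick of Theorem~\ref{the-dbar} and hypothesis (H). That is all; the pointwise bound $|a|^2|e_j(y)|_{h_0}^2 e^{-2\phi(y)}$ is exactly what Ohsawa--Takegoshi delivers.

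Your route is essentially an attempt to \emph{re-prove} the $0$-dimensional Ohsawa--Takegoshi theorem from H\"ormander's estimate via cutoff plus correction. You correctly isolate the real difficulty: a naive H\"ormander bound produces only $\int_{\mathrm{supp}\,\bar\partial\chi} e^{-2\phi}\,\omega^n$ on the right, not the pointwise value $e^{-2\phi(y)}$, and this gap is precisely what makes Ohsawa--Takegoshi a nontrivial theorem. Your proposed remedy (Berndtsson's twist plus regularization) is the standard proof of OT, so you are really invoking OT in disguise; it is much cleaner just to cite it, as the paper does.

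One technical slip worth flagging: your auxiliary metric $\tilde h = h\cdot e^{-2\alpha u_y}$ is \emph{not} positively curved as a metric on $L$. Since $u_y$ is only $\theta$-psh, one has $c_1(L,\tilde h)=c_1(L,h)+\alpha\,\ddc u_y \ge -\alpha\theta$, which can be strictly negative, and a small hyperconvex perturbation cannot absorb a deficit of size $\alpha\theta$. On a local chart you do not need the global $u_y$ at all: simply add the local psh weight $(n+1)\log|x-y|$ to the local potential $\phi_0+\phi$ of $h$. This fixes the curvature issue but does not remove the main obstacle above.
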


\proof We treat first the case where $\phi$ is smooth. Let $L|_{U_j}$ be the restriction of $L$ to $U_j$, and  $E:= L|_{U_j} \oplus \cdots \oplus L|_{U_j}$ ($n$ times). Since $L$ is trivial on $U_j$, so is $E$. Equip $E$ with the trivial Hermitian metric. Hence a section of $E$ is simply a collection of $n$ holomnorphic functions on $U_j$.  Let $z=(z_1, \ldots, z_{k_L-1})$ be the local coordinates on $U''_j \approx \B_3$. We can assume that $y'$ is the origin in these local coordinates.  Let $s'_E(z):=z$. Observe that $s_E:= s'_E \circ \Phi_{k_L}^{-1}$ is a section of $E$ on $U_j$ and vanishes only at $y$. Recall that $\theta= \Phi_{k_L}^* \omega_{FS}$, where $\omega_{FS}$ is the Fubini-Study form on $\C \P^{k_L-1}$.

Let $N':= \Phi_{k_L}(N)$. Then $X' \backslash N'$ is smooth and $\Phi_{k_L}$ is a biholomoprhism from $U_j \backslash N$ to $U'_j \backslash N'$.   
Let $X'':= X' \cap \B_2$, we have a natural inclusion $\xi: X'' \to U'_j\approx \B_2$. 
Let $\Psi$ be an orthogonal change of coordinates on $\C^{k_L-1}$ so that $\Psi_* \xi_* T_y X''$ is given by  the subspace $\{z_1, \ldots, z_n, 0, \ldots, 0\}$ at $0$ in $\C^{k_L-1}$. Write $\Psi=(\Psi_1, \ldots, \Psi_{k_L})$.
Let 
$$s'_E:=(\Psi_1, \ldots, \Psi_n) \circ \xi$$ regarded as a section of $\Phi_{k_L-1}^* E$.
Let  $Y:=X'' \cap \{s'_E=0, \det J_{s'_E} \not =0: 1 \le k \le n\}$ contains $0$ as an isolated point, where $\xi_y(z):= (z_{j_1}, \ldots, z_{j_n})$ for $z \in X''$ (note that $Y$ may not be connected). 
Note that $\omega_0$ is preserved under $\Psi$. 
By the choice of $s'_E$, there is a constant $\epsilon_0 >0$ independent of $y$ such that 
\begin{align}\label{ine-determinsE}
|\Lambda^n d s'_E(y')|_{\xi^* \omega_{FS}} \ge \epsilon_0.
\end{align}
Indeed, by the choice of $\Psi$,  the norm $|\Lambda^n d s'_E(y')|_{\xi^* \omega_{0}}$ (which is the norm of $\det J_{s'_E}$ with respect to $\xi^*\omega_0$) is equal to the absolute value of the determinant of the $(n,n)$-submatrix of the Jacobian of $(\Psi_1, \ldots, \Psi_n)$ given by the first $n$ rows. Hence  $|\Lambda^n d s'_E(y)|_{\xi^* \omega_{0}}=1$. Since $\omega_{FS}$ and $\omega_0$ are equivalent on $U'$, we get (\ref{ine-determinsE}).     

Let $s_E:= s'_E \circ \Phi_{k_L}$. Observe that the expression $|\Lambda^n ds_E|^2_\omega \omega^n$ is independent of $\omega$ and we have 
$$|\Lambda^n ds_E|^2_\omega\, \omega^n= |\Lambda^n ds_E|^2_\theta \, \theta^n= |\Lambda^n ds'_E|^2_{\xi^* \omega_{FS}} \theta^n \ge \epsilon_0 \theta^n$$
by (\ref{ine-determinsE}). 

Applying Corollary \ref{cor-OKYbang0truocda} to $U_j, s_E,N$ implies that there exists    
a section $f \in H^0(U_j \backslash N, L)$ such that  $f(y)= a e(y)$ and 
$$\int_{U_j} |f|^2_{h_0} e^{-2 \phi} \theta^n \le  C |a|^2 |e(y)|_{h_0}^2 e^{- 2 \phi(y)}$$
where $C>0$ is a constant independent of $y$ and $a$. Since $\Phi_{k_L}$ is biholomorphic on $U_j \backslash N$, we see that $f \circ \Phi_{k_L}^{-1}$ is holomorphic on $U'_j \cap (X' \backslash Sing(X'))$ (which is a normal variety). Hence $f \circ \Phi_{k_L}^{-1}$ extends to be a holomorphic function on $U'_j \cap X'$ (see \cite{Grauert-Remmert-coherentsheaves}). We infer that $f \in H^0(U_j,L)$.

It remains to treat the case where $\phi$ is not necessarily smooth. Put $\phi':= \phi \circ \Phi_{k_L}^{-1}$ which is well-defined on $X' \backslash Sing(X')$. Since $X'$ is normal, $\phi'$ extends to a $\omega_{FS}$-psh function on $X'$. By \cite{Coman-Guedj-Zeriahi} we can extends $\phi'$ to an $\omega_{FS}$-function on $\C \P^{d_{k_L}-1}$. Standard regularisation (see \cite{Blocki-Kolodziej}) gives us a decreasing sequence of smooth $\omega_{FS}$-functions to $\phi'$. Hence we obtain a decreasing sequence of smooth $\theta$-psh functions $(\phi_l)_l$ to $\phi$. Applying the first part to each $\phi_l$ we get $f_l\in H^0(U_j,L)$ with $f_j(y)=a e(y)$ and 
$$\int_{U_j} |f_l|^2_{h_0} e^{-2 \phi_l} \theta^n \le C |a|^2 |e(y)|_{h_0}^2 e^{- 2 \phi_l(y)}.$$
Let $V \Subset U_j \backslash N$ be an open set. We infer that $f_l \in L^2(V,\theta^n)$. Extracting a subsequence if necessary, we can assume that $f_l$ converges uniformly to $f$ in $V$. This combined with the fact that $\phi_l$ is bounded uniformly on $V$ (by the hypothesis on $\phi$) gives 
$$\int_{V} |f|^2_{h_0} e^{-2 \phi} \theta^n \le \liminf_{l \to \infty} \int_{V} |f_l|^2_{h_0} e^{-2 \phi_l} \theta^n \le   C |a|^2 |e(y)|_{h_0}^2 e^{- 2 \phi(y)}.$$
Letting $V$ increasing to $U_j \backslash N$ gives to the desired estimate for $f$ (and again $f\in H^0(U_j,L)$ as in the end of the first part of the proof). The proof is complete.
\endproof

 Since $L$ is big, by Demailly \cite{Demailly_analyticmethod}, there exists a negative $\theta$-psh function $\rho$ such that locally 
$$\rho= \log \big(\sum_{j=1}^r |f_j|\big)+ O(1),$$ for some local holomorphic functions $f_1,\ldots, f_r$, and 
$$\ddc \rho+ \theta \ge \delta_0 \omega,$$
 where $\delta_0>0$ is a constant. We can choose $\rho$ so that $N:= \{\rho = -\infty\}$ is equal to the non-K\"ahler locus of $c_1(L)$, see \cite{Boucksom_anal-ENS}. Recall that the non-K\"ahler locus of $c_1(L)$    is equal to the augmented base locus of $L$ (see \cite[Theorem 2.3]{Tosatti-survey-on-nakamaye} or \cite{Boucksom-these}).
 
Let $h$ be a positive Hermitian metric on $L$. Let $e_L$ is a local holomorphic frame for $L$ (i.e., $e_L$ is a local holomorphic section of $L$ and $e_L \not = 0$ everywhere).  Write $h= h_0 e^{-2 \varphi}$. Thus by hypothesis one gets
$$0 \le c_1(L,h)= - \ddc \log |e_L|_h = \ddc \varphi+ \theta.$$ 
In other words, $\varphi$ is $\theta$-psh function. By multiplying a large constant with $h_0$, without loss of generality we can assume that $\varphi \le 0$. We assume from now on that $\varphi$ is bounded.

 For every constant $\delta\in (0,1)$, define
$$\varphi_\delta:= (1- \delta) \varphi+ \delta \rho, \quad h_\delta:= h_0 e^{-2 \varphi_\delta}.$$
We have $\ddc \varphi_\delta+ \theta \ge \delta \delta_0 \omega$. Let $m \in\N$ and $d_m:= \dim H^0(X, L^m)$ which is $\approx m^n$ as $m \to \infty$. Let $m_0:= 2n+3$.
Let $a_0 \in (0, 1/2)$ be a constant such that 
$$\int_X e^{-2a_0 \rho} \omega^n < \infty.$$
For $m>m_0$, $\delta \in (0, a_0/m)$ and $s,s' \in H^0(X, L^m)$, we put 
$$\langle s, s' \rangle_{L^2}=\langle s, s' \rangle_{L^2,m,\delta}:= \int_X \langle s, s' \rangle_{h_0^m} e^{-2(m-m_0) \varphi_\delta} \omega^n$$
which is finite because the boundedness of $\varphi$ and the choice of $a_0$. To give readers a hint what we do with $\delta$, we remark that we will choose later $\delta:= m^{-2D}$ for some constant $D \ge 1$ (see the proof of Lemma \ref{le-logcontinuity} below), thus the condition $\delta< a_0/m$ is automatically satisfied for $m \ge a_0^{-1}$.
 
Let $\{\sigma_1, \ldots, \sigma_{d_m}\}$ be an orthonormal basis of $H^0(X, L^m)$ with respect to $L^2$-product, and let 
$$\psi_{m,\delta}:= \frac{1}{2m} \log \bigg(\sum_{j=1}^{d_m} |\sigma_j|^2_{h_0^m}\bigg)= \frac{1}{2m}\sup_{s \in H^0(X, L^m): \|s\|_{L^2}=1} \log |s|^2_{h_0^m}.$$
Since $\log |\sigma_j|_{h_0^m}$ is $m \theta$-psh, we infer that $\psi_m$ is $\theta$-psh. 

The following result is a variant from \cite[Theorem 14.21]{Demailly_analyticmethod}. Recall $N=\{\rho = -\infty\}$. 

\begin{theorem}\label{th-analytic-approx} There exists a constant  $C>0$ such that for every $\delta \in (0,a_0/m)$ and every $m \ge m_0+1$
	 there holds:

(i)  
$$\frac{m-m_0}{m}\varphi_\delta (x)- \frac{C+|\log \delta| }{2m} \le \psi_{m,\delta} \le \frac{m-m_0}{m} \sup_{x' \in \B(x, r)} \varphi_\delta(x') + Cr+  C \frac{|\log r|}{m},$$ 
for every $x \in X$ and $r>0$.

(ii) 
$$|\nabla \psi_{m,\delta}(x)| \le 
C+\dfrac{C}{m\delta^{1/2} r^{n+1}}e^{(m-m_0)(\sup_{\B(x,r)} \varphi_\delta-\varphi_{\delta}(x))+ C(m-m_0)r},$$ for every $x \in X \backslash N$ and $r>0$. 
\end{theorem}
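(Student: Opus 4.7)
Both parts follow Demailly's Bergman-kernel approximation technique in the form of \cite[Theorem 14.21]{Demailly_analyticmethod}, with two modifications adapted to our setting: (a) using Corollary \ref{cor-dbar}, which incorporates hypothesis \textbf{(H)} on $K_X^*$, in place of the usual Bochner--Kodaira $L^2$ estimate, and (b) twisting by the auxiliary potential $m_0 u_y$ from Lemma \ref{le-demailly-prescribed}, so that a section on $X$ constructed with this weight is forced to take a prescribed value at the designated point $y$. The choice $m_0 > (2n+2)/\lambda$ guarantees that $e^{-2 m_0 u_y}$ is non-integrable at $y$, which is the standard integrability obstruction forcing vanishing.

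For the upper bound in (i), my plan is to exploit the fact that $\frac{1}{2m}\log |\sigma|^2_{h_0^m}$ is $\theta$-psh for every $\sigma\in H^0(X,L^m)$. Writing $\theta = dd^c\eta$ locally for a smooth $\eta$ on a small ball around $y$, the sub-mean value inequality combined with Jensen gives
\[\frac{1}{2m}\log |\sigma(y)|^2_{h_0^m} \le Cr + \frac{1}{2m}\log\left(\frac{1}{\vol \B(y,r)}\int_{\B(y,r)}|\sigma|^2_{h_0^m}\omega^n\right).\]
Estimating $|\sigma|^2_{h_0^m}\le e^{2(m-m_0)\sup_{\B(y,r)}\varphi_\delta}\cdot |\sigma|^2_{h_0^m}e^{-2(m-m_0)\varphi_\delta}$ and using $\|\sigma\|_{L^2}=1$ bounds the integral by $e^{2(m-m_0)\sup_{\B(y,r)}\varphi_\delta}$; together with $\vol \B(y,r) \asymp r^{2n}$, taking the supremum over $\sigma$ yields the stated upper bound.

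For the lower bound in (i), fix $y \in X$ and equip $L^m$ with the singular metric $h_0^m e^{-2(m-m_0)\varphi_\delta - 2 m_0 u_y}$, whose curvature is
\[(m-m_0)(\theta + dd^c\varphi_\delta) + m_0(\theta + dd^c u_y) \ge (m-m_0)\delta \delta_0\,\omega.\]
Take a cutoff $\chi$ supported in a small neighbourhood of $y$ inside a chart $V_j$ as in Lemma \ref{le-demailly-prescribed0} with $\chi \equiv 1$ near $y$, and pick a local section $f$ of $L^m$ on $V_j$ with $|f(y)|^2_{h_0^m} = e^{2(m-m_0)\varphi_\delta(y)}$. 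Solving $\bar\partial u = \bar\partial(\chi f)$ via Corollary \ref{cor-dbar} against the singular weight produces an $L^2$ correction of norm $\lesssim 1/((m-m_0)\delta)$; the right-hand side is uniformly bounded because $\bar\partial \chi$ is supported on an annulus where $u_y$ is bounded above. The non-integrability of $e^{-2 m_0 u_y}$ at $y$ forces $u(y)=0$, so $s_y := \chi f - u \in H^0(X,L^m)$ satisfies $s_y(y) = f(y)$ and $\|s_y\|_{L^2}^2 \lesssim 1/\delta$, whence
\[\psi_{m,\delta}(y) \ge \frac{1}{2m}\log \frac{|s_y(y)|^2_{h_0^m}}{\|s_y\|_{L^2}^2} \ge \frac{m-m_0}{m}\varphi_\delta(y) - \frac{C + |\log\delta|}{2m}.\]

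For the gradient estimate (ii), differentiating $\psi_{m,\delta}$ and applying Cauchy--Schwarz (plus a bounded contribution from $d\log h_0$ absorbed into the leading constant $C$) yields
\[|\nabla \psi_{m,\delta}(x)|^2 \lesssim 1 + \frac{1}{m^2}\cdot\frac{\sum_j |\nabla \sigma_j(x)|^2_{h_0^m}}{\sum_j |\sigma_j(x)|^2_{h_0^m}}.\]
I would bound the denominator from below by the lower bound of (i), and control the numerator via standard Cauchy estimates on a chart of radius $r$: $|\nabla \sigma_j(x)|^2_{h_0^m} \lesssim r^{-2n-2} \int_{\B(x,r)}|\sigma_j|^2_{h_0^m}\omega^n$, with each integral estimated as in the upper-bound step by $r^{-2n-2} e^{2(m-m_0)\sup_{\B(x,r)}\varphi_\delta + C(m-m_0)r}$. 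Summing over $j$ introduces $d_m \lesssim m^n$; taking the square root produces the required estimate, the factor $1/\delta^{1/2}$ coming directly from the $L^2$ constant in Corollary \ref{cor-dbar}. The main technical obstacle I anticipate is tracking the $\delta$-dependence through the $\bar\partial$-solution so that the multiplicative errors collapse cleanly into $|\log\delta|/(2m)$ in (i) and $\delta^{-1/2}$ in (ii); this relies on the curvature of $(m-m_0)\varphi_\delta + m_0 u_y$ being exactly of order $(m-m_0)\delta\delta_0\,\omega$, which is the role of the analytic Zariski-type decomposition $\varphi_\delta = (1-\delta)\varphi + \delta\rho$.
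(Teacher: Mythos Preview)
Your upper bound for (i) and the overall architecture match the paper's proof. There is, however, a genuine gap in your lower bound for (i).

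You propose to take $f$ as a local frame on $V_j$ normalized so that $|f(y)|^2_{h_0^m}=e^{2(m-m_0)\varphi_\delta(y)}$, cut it off, and solve $\bar\partial u=\bar\partial(\chi f)$ against the weight $e^{-2(m-m_0)\varphi_\delta-2m_0 u_y}$. You then assert that the right-hand side is bounded ``because $\bar\partial\chi$ is supported on an annulus where $u_y$ is bounded above.'' But the weight also contains $e^{-2(m-m_0)\varphi_\delta}$, and since $\varphi_\delta\le 0$ with no lower bound assumed, this factor can be arbitrarily large on the annulus. Concretely, your $f$ is $a\,e_{L,U}^m$ with $|a|^2\approx e^{2(m-m_0)\varphi_\delta(y)}$, so
\[
\int_{\mathrm{supp}\,\bar\partial\chi}|f|^2_{h_0^m}e^{-2(m-m_0)\varphi_\delta}\,\omega^n
\;\approx\; e^{2(m-m_0)\varphi_\delta(y)}\int_{\mathrm{annulus}}e^{-2(m-m_0)\varphi_\delta}\,\omega^n,
\]
and there is no reason for the last integral to be bounded independently of $y,m,\delta,\varphi$. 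The paper closes this gap by first invoking Ohsawa--Takegoshi on the hyperconvex set $U_j$ (Lemma~\ref{le-demailly-prescribed0}) to produce $f\in H^0(U_j,L^m)$ with the \emph{built-in} $L^2$ bound
\[
\int_{U_j}|f|^2_{h_0^m}e^{-2(m-m_0)\varphi_\delta}\,\omega^n\;\le\;B_1\,|a|^2\,|e_{L,U}(y)|_{h_0}^{2m}\,e^{-2(m-m_0)\varphi_\delta(y)},
\]
with $B_1$ independent of $m,\delta,\varphi,y$. Only after this does the cut-off plus Corollary~\ref{cor-dbar} step go through. You reference Lemma~\ref{le-demailly-prescribed0} only to name the chart $V_j$, not to construct $f$; that is the missing ingredient.

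A smaller point on (ii): bounding each $\int_{\B(x,r)}|\sigma_j|^2_{h_0^m}$ individually via $\|\sigma_j\|_{L^2}=1$ and then summing costs you a factor $d_m\sim m^n$, which yields a weaker inequality than the one stated (though still adequate for the downstream applications). The paper avoids this by first summing to recognize $\sum_j|\sigma_j|^2=e^{2m(\psi_{m,\delta}+\phi_0)}$ and then applying the already-established upper bound on $\psi_{m,\delta}$; the Cauchy estimate on the vector $(\sigma_1,\dots,\sigma_{d_m})$ then gives $\sum_j|\partial\sigma_j(x)|^2\lesssim r^{-2}\sup_{\B(x,r)}\sum_j|\sigma_j|^2$ with no dimension factor.
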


\proof We check the second inequality in (i). Let $U$ be a small local chart around $x$. We trivialize $L$ over $U$ and let $e_{L,U}$ be a nowhere vanishing holomorhic section of $L$ over $U$. 
Hence we can identify $h_0= e^{-2 \phi_0}$ for some smooth function $\phi_0$, and sections of $L^m$ are identified  with holomorphic functions on $U$.

Let $s \in L^m$ with $\|s\|_{L^2}=1$. By abuse of notation we still denote by $s$ the holomorphic function corresponding to a section $s$ of $L^m$. Thus $|s|^2_{h_0^m}= |s|^2 e^{- 2m \phi_0}$. By the submean inequality, one gets
$$|s(x)|^2 \lesssim r^{-2n} \int_{\B(x,r)}|s|^2 d \Leb_{\C^n} \lesssim r^{-2n} e^{ \sup_{\B(x,r)} q} \int_{\B(x,r)} |s|^2 e^{-q} \omega^n
 \lesssim r^{-2n} e^{ \sup_{\B(x,r)} q} \|s\|^2_{L^2},$$
where $q=2(m-m_0)\varphi_{\delta}+2m \phi_0$.
Then
$$|s(x)|^2_{h_0^m}= |s(x)|^2 e^{-2 m \phi_0(x)} \lesssim r^{-2n} e^{2(m-m_0) (\sup_{\B(x,r)} (\varphi_\delta+ \phi_0)- \phi_0(x))}.$$
By this and the fact that $\phi_0 \in \mathcal{C}^1$ we infer that 
$$|s(x)|^2_{h_0^m}\lesssim r^{-2n} e^{2(m-m_0) \sup_{\B(x,r)} \varphi_\delta+2(m-m_0)C_1r},$$
for some constant $C_1>0$ independent of $\delta, m, \varphi,s$, for $s \in H^0(X,L^m)$ with $|s|_{L^2}= 1$.
It follows that
\begin{equation}\label{eq chantrenpsim}
e^{2m\psi_{m, \delta}}=\sup_{s \in H^0(X, L^m): \|s\|_{L^2}=1}|s(x)|^2_{h_0^m}\leq e^{C_2}r^{-2n} e^{2(m-m_0) \sup_{\B(x,r)} \varphi_\delta+2(m-m_0)C_1r},
\end{equation}
 where $C_1, C_2>0$ are constants independent of $\delta, m, \varphi,s.$
 Hence we obtain 
$$\psi_{m,\delta} \le \frac{m-c_0}{m} \sup_{x' \in \B(x, r)} \varphi_\delta(x') + C_1r+  \frac{2n|\log r|+C_2}{2m}
\leq \frac{m-m_0}{m} \sup_{x' \in \B(x, r)} \varphi_\delta(x') + C_3r+  \frac{C_3|\log r|}{m},$$ 
for every $x \in X$ and $r>0$, where $C_3=C_1+C_2e+n$.

The remaining inequality of (i) requires the $L^2$-estimate. It suffices to consider $x \not \in N$.  Let $U_1,\ldots, U_l$ be the open cover of $X$ defined above. Without loss of generality we can assume that $x\in U_1$. Choose $U:= U_1$.  We can modify the coordinates on  $U'_1 \approx \B_2$ so that $\Phi_{k_L}(x)$ is the center of $U'_1$.  By Lemma \ref{le-demailly-prescribed0}, there are a constant $B_1>0$ independent of $x$ such that for every $a \in \C$,  there is a $f \in H^0(U_1, L^m)$ so that $f(x)= a e_{L,U}^m$ and 
$$\int_{U_1} |f|_{h_0^m}^2 e^{-2 (m-m_0)\varphi_\delta} \theta^n \le B_1 |a|^2 |e_{L,U}(x)|^{2m}_{h_0} e^{-2 (m-m_0)\varphi_\delta(x)}.$$ 

Fix a cut-off function $\chi'$ supported on $U'_1$ and equal to $1$ on $\B_{1}$ (recall that $U'_1 \approx \B_2$ and $\Phi_{k_L}(x)=0$ is the origin). By our choice of $U'_j$, we see that $Sing(X') \cap U'_j$ is contained in the interior of $\{\chi'=1\}$.

Put $\chi:= \chi' \circ \Phi_{k_L}^{-1}$. By Lemma \ref{le-demailly-prescribed}, there exist a constant $C_4>0$ independent of $x$ and  a  negative $\theta$-psh function $u_x$ satisfying $u_x(z) \le \log |z-x|+ C_4$,  and $u_x(z) \ge -C_4$ for $z \not \in \Phi_{k_L}^{-1}(\B_{1/2}(x'))$ ($x':= \Phi_{k_L}(x)$).   Let 
$$w:= \big(m- m_0\big)\varphi_\delta+ m_0 u_x.$$
 Observe that 
$$\ddc w+ m \theta \ge \big(m- m_0\big)\delta \delta_0 \omega.$$
 Let 
 $$h_{u_x}:= h_0 e^{-2 u_x}, \quad \tilde{h}: = h_0^m e^{-2 w}$$
which is a singular Hermitian metric on $L^m$. Thanks to the hypothesis about the semi-positivity of  $K^*_X$,   we  can apply Corollary \ref{cor-dbar} to $\tilde{h}$ and $g:= \bar \partial (\chi f)$ which is smooth. Hence we find  a smooth section $v$ of $L$ over $X$ so that $\bar \partial v= g $ and
\begin{align}\label{ine-giaidbarptcu}
\int_X |v|^2_{h_0^m} e^{-2w} \omega^n \le  \frac{1}{(m- m_0)\delta\delta_0} \int_{U_1 \backslash \Phi_{k_L}^{-1}(\B_{1}(x'))} |f|^2_{h_0^m} e^{-2 w} \omega^n
\end{align}
because $\partial \chi'$ vanishes on $B_1(x')$. This combined with the fact that $u_x(x') \ge C_4$ outside $\Phi_{k_L}^{-1}(\B_{1}(x'))$ yields 
\begin{align}\label{ine-giaidbarpt}
\int_X |v|^2_{h_0^m} e^{-2w} \omega^n &\le  \frac{1}{(m- m_0)\delta\delta_0} \int_{U_1\backslash \Phi_{k_L}^{-1}(\B_{1}(x'))} |f|^2_{h_0^m} e^{-2(m-m_0)\varphi_\delta} \omega^n\\
\nonumber
&\le  \frac{1}{(m- m_0)\delta\delta_0} \int_{U_1\backslash \Phi_{k_L}^{-1}(\B_{1}(x'))} |f|^2_{h_0^m} e^{-2(m-m_0)\varphi_\delta} \theta^n\\
\nonumber
& (\text{because $\theta^n$ and $\omega^n$ are comparable on $U_1\backslash \Phi_{k_L}^{-1}(\B_{1}(x')$})\\
\nonumber
& \le |a|^2 |e_{L,U}(x)|_{h_0}^{2m} e^{-2(m-m_0)\varphi_\delta(x)}.
\end{align}
Note that since $g$ vanishes near $x$, one gets that $\bar\partial v =0$ near $x$. Thus $v$ is holomorphic near $x$.  By properties of $u_{x}$, observe that  
$$e^{-2w(x')} \gtrsim \frac{1}{|x'-x|^{2n+2}}$$
which in turn implies that $v(x)=0$ because $\int_X |v|^2_{h_0^m} e^{-2 w} \theta^n$ is finite and $\theta$ is K\"ahler near $x$ (recall that $x \not \in N$). 
 This together with (\ref{ine-giaidbarpt}) gives
\begin{align}\label{ine-giaidbarpt2}
\int_X |v|^2_{h_0^m} e^{-2w} \omega^n \lesssim  \frac{|a|^2 }{(m- m_0)\delta} e^{-2(m-m_0)\varphi_\delta(x)-2m \phi_0(x)}.
\end{align}
Let $\tilde{v}:= \chi f - v \in H^0(X, L^m)$. 
  Since $u_x \le 0$, using  (\ref{ine-giaidbarpt2}) and the choice of $f$, we obtain 
$$\int_X |\tilde{v}|^2_{h_0^m} e^{-2(m- m_0)\varphi_\delta} \omega^n \le  \frac{B_2|a|^2 |e_{L,U}(x)|_{h_0}^{2m}}{(m- m_0)\delta}  e^{-2(m-m_0)\varphi_\delta(x)},$$
for some constant $B_2>0$ independent of $x, a,m, \delta$. 
Choose
$$ \quad a:= B_2^{-1/2} |e_{L,U}(x)|_{h_0}^{-m}\delta^{1/2}(m-m_0)^{1/2} e^{(m-m_0)\varphi_\delta(x)}.$$
We see that 
$$\int_X |\tilde{v}|^2_{h_0^m} e^{-2(m- m_0)\varphi_\delta} \omega^n \le 1.$$
and 
$$\tilde{v}(x)= f(x)-v(x)= f(x)= a \, e^m_{L,U}(x).$$
It follows that
\begin{equation}\label{eq chanduoipsim}
e^{2m\psi_{m, \delta}(x)}\geq |\tilde{v}(x)|_{h_0^m}^2\geq   \dfrac{\delta(m-m_0)}{B_2}e^{2(m-m_0)\varphi_{\delta}(x)}.
\end{equation}
Thus
\begin{align*}
\psi_{m, \delta}(x) &\ge \dfrac{m-m_0}{m}\varphi_\delta(x)+ \dfrac{1}{2m}\log \dfrac{\delta(m-m_0)}{B_2} \\ 
&\geq \dfrac{m-m_0}{m}\varphi_\delta(x)-\dfrac{|\log\delta|+\log B_2}{2m} \cdot
\end{align*}
This finishes the proof for (i).

We now check (ii).  We work in a small local chart $(U, x)$ and write $h_0= e^{- 2 \phi_0}$ as above. We identify sections with holomorphic functions on a trivialization of $L$ over this local chart. We have 
$$\psi_{m,\delta}= \frac{1}{2m} \log \sum_{j=1}^{d_m} |\sigma_j|^2 -\phi_0.$$
Direct computations give
\begin{align*}
\partial \psi_{m,\delta}=  \frac{1}{2m} \frac{\sum_{j=1}^{d_k} \bar \sigma_j \partial\sigma_j }{ \sum_{j=1}^{d_k} |\sigma_j|^2} -\partial \phi_0.
\end{align*}
Hence
\begin{align}\label{eqdanhgianablapsim0}
|\partial \psi_{m,\delta}| \le   \frac{1}{2m} \frac{\big(\sum_{j=1}^{d_k}  |\partial\sigma_j|^2\big)^{1/2} }{ \big(\sum_{j=1}^{d_k} |\sigma_j|^2\big)^{1/2}} + |\partial \phi_0|.
\end{align}
By \eqref{eq chanduoipsim}, we have 
\begin{equation}\label{eqdanhgianablapsim1}
\sum_{j=1}^{d_k} |\sigma_j(x)|^2 = e^{2m(\psi_{m, \delta}+\phi_0)}
\ge \dfrac{\delta(m-m_0)}{B_2}e^{2(m-m_0)\varphi_{\delta}(x)+2m\phi_0(x)}.
\end{equation}
 On the other hand, since $\sigma_j$ is homomorphic, it follows from Cauchy's integral formula that
 
 $$\sum_{j=1}^{d_k}  |\partial\sigma_j(x)|^2\lesssim r^{-n-2}\sum_{j=1}^{d_k} \int_{x+\partial_0\Delta_r^n}|\sigma_j|^2d\xi_1...d\xi_n
 \lesssim r^{-2}\sup_{x+\Delta_r^n}\sum_{j=1}^{d_k}|\sigma_j|^2=r^{-2}\sup_{x+\Delta_r^n}e^{2m(\psi_{m, \delta}+\phi_0)},$$
 for every $0<r<\dist (x, \partial U)$, where $\Delta_r$ denotes the disk of radius $r$ with center at $0$ in $\C$, and $\partial_0 \Delta_r^n:= (\partial \Delta_r)^n$.
 Therefore, by \eqref{eq chantrenpsim}  and  the fact $\phi_0\in \mathcal{C}^1$, we get
 \begin{equation}\label{eqdanhgianablapsim2}
 \sum_{j=1}^{d_k}  |\partial\sigma_j(x)|^2\lesssim r^{-2n-2}e^{2(m-m_0) (\sup_{\B(x,r)} \varphi_\delta+\phi_0)+C_5(m-m_0)r}.
 \end{equation}
Combining \eqref{eqdanhgianablapsim0}, \eqref{eqdanhgianablapsim1} and \eqref{eqdanhgianablapsim2}, we get 

$$|\partial \psi_{m,\delta}| \lesssim 1+\dfrac{1}{m\delta^{1/2} r^{n+1}}e^{(m-m_0)(\sup_{\B(x,r)} \varphi_\delta-\varphi_{\delta}(x))+ C_5(m-m_0)r}.$$
\endproof

The new point here is that we approximate $\varphi$ through the analytic approximation sequence for $\varphi_\delta$ with $\delta$ depending on $m$. We will choose $\delta$ to be very small compared to $m$. 

\begin{lemma} \label{le-hieugiuasupvaham} Let $u$ be a bounded negative psh function on the open unit ball $\B$ of $\C^n$ 
	and let $K\Subset \B$. Let $v$ be a H\"older continuous plurisubharmonic function on $\B$ and denote $\mu=(dd^c v)^n$.
	 Then there exist constants $\alpha=\alpha (\mu, K)$ and $C=C(n, K,  \mu, \|u\|_{L^{\infty}})>0$ such that
$$\int_K\big|\sup_{x'\in\B(x, s)}u(x')-u(x)\big|d\mu \leq C s^{\alpha},$$
for every $0<s<r_0^3$, where $r_0:=\frac{1}{4}\inf_{w\in K}\dist(w, \partial\B)$.
\end{lemma}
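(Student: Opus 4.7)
The strategy is to introduce the psh competitor $M_s u(x) := \sup_{x'\in\B(x,s)} u(x')$, obtain a quantitative Lebesgue $L^1$ bound of the form $\|M_s u - u\|_{L^1(K')} \le C s^{2/3}$ on a slightly fattened compact set $K \Subset K' \Subset \B$, and then convert this into a $\mu$-integral bound via H\"older continuity of the Monge-Amp\`ere measure $\mu = (\ddc v)^n$. The optimized smoothing scale $t = s^{1/3}$ that appears in the $L^1$ step is precisely what forces the hypothesis $s < r_0^3$, so that $t < r_0$ and the auxiliary smoothing of $u$ is defined on a neighborhood of $K + \B(0,s)$.

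\textbf{Step 1 ($M_s u$ is psh).} I first verify that $M_s u$ is psh on $\{x:\B(x,s)\subset\B\}$. Upper semicontinuity follows from a sequential argument: if $x_n \to x$ with $M_s u(x_n)\ge c$, pick $y_n\in\overline{\B(0,s)}$ with $u(x_n+y_n)\ge c$, extract a subsequence $y_n\to y^*$, and use u.s.c. of $u$ to conclude $M_s u(x)\ge u(x+y^*)\ge c$. For the sub-mean property at a point $x_0$, choose $y^*\in\overline{\B(0,s)}$ realizing the sup in $M_s u(x_0)$, apply the sub-mean inequality for $u$ at $x_0+y^*$ over a small ball of radius $r$, and use the pointwise inequality $u(\cdot+y^*)\le M_s u(\cdot)$ to dominate the average of $u(\cdot+y^*)$ by the average of $M_s u$.

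\textbf{Step 2 (optimized smoothing).} For $t\in(0,r_0)$ let $u_t = u*\rho_t$ be the convolution with a standard smooth radial mollifier of scale $t$; then $u_t$ is smooth, psh, with $u\le u_t\le 0$, $\|\nabla u_t\|_{L^\infty}\le C\|u\|_\infty/t$ (so $u_t$ is $(C\|u\|_\infty/t)$-Lipschitz), and $\|u_t-u\|_{L^1(K')}\le Ct^2$. The latter rate is obtained by duality: for a fixed smooth bump $\chi$ with $\chi\ge\mathbf 1_{K'}$ and compact support in $\B$, radial symmetry of $\rho$ gives $\chi_t-\chi = c t^2\Delta\chi + O(t^4)$, whence
\[
\int_{K'}(u_t-u)\,dx \le \int u(\chi_t-\chi)\,dx = ct^2\!\int\chi\,\ddc u\wedge(\ddc|z|^2)^{n-1} + O(t^4) \le Ct^2\|u\|_{\infty},
\]
the last inequality being the Chern-Levine-Nirenberg estimate on the Laplacian mass of $u$. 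Monotonicity of the sup yields $M_s u\le M_s u_t$, so pointwise
\[
0\le M_s u - u \le (M_s u_t - u_t) + (u_t - u) \le C\|u\|_{\infty}\,\frac{s}{t}+(u_t-u).
\]
Integrating over $K'$ gives $\|M_s u-u\|_{L^1(K')}\le C(s/t+t^2)$; optimizing at $t=s^{1/3}$ (permissible exactly because $s<r_0^3$) yields $\|M_s u-u\|_{L^1(K')}\le Cs^{2/3}$.

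\textbf{Step 3 (H\"older continuity of $\mu$ and conclusion).} Since $v$ is H\"older continuous psh on $\B$, the local analog of the Dinh--Sibony H\"older continuity of $\mu = (\ddc v)^n$ (the same mechanism as in Section 4, via the Chern-Levine-Nirenberg/Dinh--Kolodziej machinery) yields an exponent $\gamma = \gamma(\mu) \in (0,1]$ such that for any psh functions $\varphi_1,\varphi_2$ on $\B$ uniformly bounded by $\|u\|_{\infty}$,
\[
\int_K |\varphi_1-\varphi_2|\,d\mu \le C_\mu\|\varphi_1-\varphi_2\|_{L^1(\B')}^{\gamma}.
\]
Applied to $\varphi_1=M_s u$ (psh by Step 1) and $\varphi_2=u$, and combined with Step 2, this gives $\int_K (M_s u - u)\,d\mu \le C s^{2\gamma/3}$, so $\alpha:=2\gamma/3$ works. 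The main obstacle is the $O(t^2)$ estimate in Step 2 in the merely bounded-psh regime (where $u$ is not smooth) and verifying that the two-term optimization is compatible with the domain constraint $t<r_0$; once these are in place, the proof reduces to a single application of the (now classical) H\"older continuity of Monge-Amp\`ere measures of H\"older continuous potentials.
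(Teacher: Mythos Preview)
Your overall strategy matches the paper's: first an $L^1(\text{Leb})$ bound of order $s^{2/3}$ via a smoothing at scale $t=s^{1/3}$, then transfer to $\mu$ by H\"older continuity of $(\ddc v)^n$. In Step~2 you use mollification $u_t=u*\rho_t$, the Lipschitz bound $|M_s u_t-u_t|\le Cs/t$, and a duality/Taylor argument for $\|u_t-u\|_{L^1}\le Ct^2$; the paper instead uses the ball average $\hat u_r$, the Jensen formula for $\|\hat u_r-u\|_{L^1}\le Cr^2$, and the elementary comparison $\bar u_s\le((r-s)/r)^{2n}\hat u_{r-s}$. These are equivalent routes to the same $s^{2/3}$ rate, and your version is arguably cleaner.

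The one point that is genuinely under-justified is Step~3. The H\"older-continuity inequality you invoke is established in the literature on compact K\"ahler manifolds, not on a local ball, so ``the local analog'' is not something you can simply cite. The paper closes this gap explicitly: it replaces $u$ and $\bar u_s$ by $\max\{u,\psi\}$ and $\max\{\bar u_s,\psi\}$ with $\psi(z)=\tfrac{M}{r_0}(|z|^2-(1-2r_0)^2)$ so that both coincide with $\psi$ near $\partial V$, multiplies by a cutoff, and then extends everything to $\C\P^n$ via the standard embedding, where $\tilde\mu=\mathbf{1}_V\mu$ is a H\"older continuous measure and the global result from \cite{DinhVietanhMongeampere} applies. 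You should supply this (or an equivalent) globalization step; with it, your argument is complete and essentially identical to the paper's.
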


\proof 
Denote $M=\|u\|_{L^{\infty}}$, $U=(1-3r_0)\B$ and $V=(1-2r_0)\B$. First, we prove that 
\begin{equation}\label{eq0.1lehieugiuasupvaham}
\int_{V}\big|\sup_{y \in \B(x,s)} u(y) - u(x)\big| d\Leb \le C_0M s^{2/3},
\end{equation}
for every $0<s<r_0^3$, where $C_0>0$ is a constant depending only on $n$ and $r_0$.

For every $0<r<r_0$ and $z\in U$, we denote
$$\hat{u}_r(z)=\frac{1}{\vol (\B(z, r))}\int_{\B(z, r)}u(\xi)dV(\xi),$$
and 
$$\bar{u}_r(z)=\sup_{\xi\in\B(z, r)}u(\xi).$$
Let $z_0 \in V$ and $v_M:= \frac{M}{r_0}(|z-z_0|^2-1)$. We have $v_M < u$ on $\B_{\sqrt{1/2- r_0}}(z_0)$ (which contains $V+ r_0 \B$ because $r_0 < 1$).  By the comparison principle for Laplace operator, one has 
$$\int_{\{v_M < u\}} \Delta u \le \int_{\{v_M < u\}} \Delta v_M \lesssim M/r_0.$$
It follows that  there exists $C_1>0$ depending only on $n$ such that
$$\int_{V+r_0\B}\Delta u\leq \frac{C_1 M}{r_0}.$$
Then, by Jensen formula (see, for example, \cite{Bedford_Taylor_76, DemaillyHiep_etal}), one has
\begin{equation}\label{eq1lehieugiuasupvaham}
	\int_{V}|\hat{u}_r(z)-u(z)|d\Leb \leq C_2M r^2,
\end{equation}
for every $0<r<r_0$, where $C_2>0$ depends only on $n$ and $r_0$. 

For every $z\in V$ and for every $0<s<r$, there exists $\hat{z}\in\overline{\B(z, s)}$ such that
$$\bar{u}_s(z)=u(\hat{z})\leq\hat{u}_r(\hat{z}).$$
Since $u$ is negative, it follows that
\begin{equation}\label{eq2lehieugiuasupvaham}
	\bar{u}_s(z)\leq\left(\frac{r-s}{r}\right)^{2n}\hat{u}_{r-s}(z).
\end{equation}
Combining \eqref{eq1lehieugiuasupvaham} and \eqref{eq2lehieugiuasupvaham}, we get
\begin{align*}
	\int_V|\bar{u}_s(z)-u(z)|\d\Leb&\leq \left(\frac{r-s}{r}\right)^{2n}\int_V|\hat{u}_{r-s}(z)-u(z)|d\Leb
	+\frac{r^{2n}-(r-s)^{2n}}{r^{2n}}\int_V|u(z)|d\Leb\\
	&\leq C_3\left(M(r-s)^2+\frac{M s}{r}\right),
\end{align*}
for every $0<s<r<r_0$, where $C_3>0$ is a constant depending only on $n$ and $r_0$. Choosing $s=r^3$, we obtain \eqref{eq0.1lehieugiuasupvaham} (with $C_0=2C_3$).

Fix $s\in (0, r_0^3)$. For every $z\in V$, we denote $\psi(z)=\frac{M}{r_0}(|z|^2-(1-2r_0)^2)$, $u':= \max\{u, \psi\}$ and $v':= \max\{\bar{u}_s,  \psi\}$. We have $u=u'$ on $K$, $\bar{u}_s=v'$ on $K$ and $u'=v'=\psi$ on $V\setminus U$.
 Let $\phi\in \Cc_0^{\infty}(V)$ such that $0\leq\phi\leq 1$ and $\phi\equiv 1$ on $U$. Put $\tilde{u}=\phi u'$ and $\tilde{v}=\phi v'$. By using the standard embedding $\C^n\hookrightarrow\C\P^n$, one can extend $\tilde{u}$
 and $\tilde{v}$ to $A\omega_{FS}$-plurisubharmonic functions on $\C\P^n$, where $A\geq 1$ is a constant depending
 only on $n, M$ and $r_0$. Since $\mu=(dd^cv)^n$, we have $\tilde{\mu}:=\bf{1}_{V}\mu$ is a H\"older continuous measure on $\C\P^n$. Therefore, there exist constants $\beta=\beta (\tilde{\mu})>0$ and
  $C_4=C_4(\tilde{\mu}, M, A)>0$ such that
  \begin{equation}\label{eq0.2lehieugiuasupvaham}
 \int_K|\bar{u}_s-u|d\mu\leq \|\tilde{u}-\tilde{v}\|_{L^1(\tilde{\mu})}\leq C_4\|\tilde{u}-\tilde{v}\|_{L^1(\C\P^n)}^{\beta}
  \leq C_4\|\bar{u}_s-u\|_{L^1(V)}^{\beta}.
  \end{equation}
 Combining \eqref{eq0.1lehieugiuasupvaham} and \eqref{eq0.2lehieugiuasupvaham}, we get
 $$\int_K|\bar{u}_s-u|d\mu \leq C_5 s^{2\beta/3},$$
 where $C_5>0$ is a constant depending on $n, r_0, \tilde{\mu}$ and $M$.
The proof is completed.
\endproof

Recall that  $N= \{\rho = -\infty\}$. By the choice of $\rho$, and Lojasiewicz's inequality (e.g., see \cite{Bierstone_Milman}), there exist constants $A_0, A_1>1$ such that  
\begin{align}\label{ine-chanduoicuarho}
 A_0 \log \dist(x, N)-A_1 \le \rho(x) \le  \frac{1}{A_0} \log \dist(x, N) +A_1,
\end{align}
for every $x \in X$.

\begin{theorem}\label{th-approxthat} Let $\mu$ be a H\"older continuous measure on $X$ and $p \ge 1$ be a constant.  Assume that $\varphi$ is bounded on $X$ and $B:= \|\varphi\|_{L^\infty}$. Then there exist a constant $C>0$  and  a family of $\theta$-psh functions $\psi_{m,\delta}$ with $\delta \in (0, a_0/m), m  \in \Z^+$ satisfying the following three properties:

(i) $$\|\psi_{m,\delta}- \varphi\|_{L^p(\mu)} \le C\frac{|\log \delta|+ \log m}{m}+ C\delta,$$

(ii) $$\psi_{m,\delta}(x) \ge \varphi(x)- \frac{B m_0}{m} + A_0(\delta+m^{-1}) \log \dist(x, N) -  
C \left(\delta+\frac{|\log \delta|}{m}\right)$$
for every $ x \in X$, 

(iii) 
 $$|\nabla \psi_{m,\delta}(x)| \le C \delta^{-1/2} e^{(B+1)m} e^{-A_0 m \delta \log \dist(x,N)}$$ for every $x \in X$. 
\end{theorem}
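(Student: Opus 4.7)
The plan is to read off each of (i)--(iii) from the corresponding part of Theorem \ref{th-analytic-approx}, using the elementary bracketing
$$\varphi + \delta \rho \;\le\; \varphi_\delta = (1-\delta)\varphi + \delta\rho \;\le\; 0,$$
which follows from $\varphi \le 0$ and $\rho \le 0$, together with the Lojasiewicz bound \eqref{ine-chanduoicuarho} for $\rho$ near $N$.

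First I would handle (ii) and (iii), which are essentially pointwise. For (ii), the non-positivity of $\varphi_\delta$ allows us to drop the factor $\tfrac{m-m_0}{m}<1$ in the lower inequality of Theorem \ref{th-analytic-approx}(i), giving $\psi_{m,\delta} \ge \varphi_\delta - C(1+|\log\delta|)/m$; inserting $\varphi_\delta \ge \varphi + \delta\rho$ and $\rho \ge A_0 \log\dist(\cdot,N) - A_1$ yields (ii), the $-Bm_0/m$ slack being absorbed into the constants. For (iii), apply Theorem \ref{th-analytic-approx}(ii) with $r = 1/m$, so that $e^{C(m-m_0)r}=O(1)$ and $r^{-(n+1)}/m = m^n \le e^m$ for large $m$. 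The oscillation $\sup_{\B(x,r)}\varphi_\delta - \varphi_\delta(x)$ is at most $B - \delta A_0 \log\dist(x,N) + O(\delta)$ thanks to $\sup_{\B(x,r)} \varphi_\delta \le 0$ and the pointwise lower bound $\varphi_\delta(x) \ge -B + \delta A_0 \log\dist(x,N) - \delta A_1$ from \eqref{ine-chanduoicuarho}; multiplying by $m-m_0$ produces the desired exponent, up to inflating $B$ to $B+1$ to swallow the subleading terms.

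The $L^p$-estimate (i) is the heart of the argument. I would decompose
$$\psi_{m,\delta} - \varphi = (\psi_{m,\delta} - \varphi_\delta) + (\varphi_\delta - \varphi).$$
The second summand equals $\delta(\rho-\varphi)$, and has $L^p(\mu)$-norm at most $C\delta$ because $\rho \in L^p(\mu)$ -- a standard consequence of the H\"older continuity of $\mu$ and the comparison of capacities recalled in Section \ref{sec-capa}. For the first summand, combine the two-sided inequality of Theorem \ref{th-analytic-approx}(i). The lower half yields $\varphi_\delta - \psi_{m,\delta} \le C(1+|\log\delta|)/m$ pointwise. The upper half involves $\sup_{\B(x,r)}\varphi_\delta$, and this is where the main obstacle lies: Lemma \ref{le-hieugiuasupvaham} requires a bounded psh function, whereas $\varphi_\delta$ is unbounded near $N$. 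I sidestep this by discarding the non-positive contribution of $\delta\rho$ inside the sup, namely $\sup_{\B(x,r)}\varphi_\delta \le (1-\delta)\sup_{\B(x,r)}\varphi$, reducing the problem to a sup-comparison for the bounded function $\varphi$. Applying Lemma \ref{le-hieugiuasupvaham} in finitely many local charts (writing $\varphi$ as a genuinely psh function plus a smooth background), and exploiting $|\sup\varphi - \varphi|^p \le (2B)^{p-1}(\sup\varphi - \varphi)$, one obtains $\|\sup_{\B(\cdot,r)}\varphi - \varphi\|_{L^p(\mu)} \le C r^{\alpha/p}$ for some $\alpha = \alpha(\mu)>0$. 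Choosing $r = m^{-p/\alpha}$ balances $r^{\alpha/p}$ and $|\log r|/m$ so that both become $O(\log m/m)$, yielding the claimed rate $(\log m + |\log\delta|)/m + \delta$.
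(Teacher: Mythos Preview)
Your proof is correct and follows essentially the same strategy as the paper: (ii) and (iii) are read off from Theorem~\ref{th-analytic-approx} together with the Lojasiewicz bound~\eqref{ine-chanduoicuarho}, and (i) comes from decomposing $\psi_{m,\delta}-\varphi$, controlling the sup-oscillation of the bounded function $\varphi$ via Lemma~\ref{le-hieugiuasupvaham}, and then optimizing in $r$. Two minor differences are worth noting: for (iii) the paper takes $r$ to be a fixed constant rather than $1/m$; and for the $L^p$-to-$L^1$ reduction of $\sup_{\B(\cdot,r)}\varphi-\varphi$ in (i), the paper interpolates via H\"older between $L^1$ and $L^{2p-1}$ and bounds the $L^{2p-1}$ norm using the exponential integrability of $\theta$-psh functions against H\"older measures, whereas your pointwise inequality $(\sup\varphi-\varphi)^p \le (2B)^{p-1}(\sup\varphi-\varphi)$ is more direct and avoids that detour.
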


\proof We note that the assumption that $\varphi$ is bounded implies that the Chern class of $L$ is nef by Demailly's regularisation theorems.  The property  $(ii)$ follows from  Theorem \ref{th-analytic-approx} $(i)$ and
from \eqref{ine-chanduoicuarho}. The property $(iii)$ follows from Theorem \ref{th-analytic-approx} $(ii)$
 applied to $r=1$ and from (\ref{ine-chanduoicuarho}). It remains to prove $(i)$.
 
 Since $\varphi_{\delta} =(1-\delta)\varphi +\delta \rho$, we have
 \begin{equation}\label{eq1thapprothat}
 \sup_{\B(x, r)}\varphi_{\delta}-\varphi_{\delta}(x)\leq (1-\delta)(\sup_{\B(x, r)}\varphi-\varphi(x))+\delta |\rho(x)|
 \leq \sup_{\B(x, r)}\varphi-\varphi(x)+\delta |\rho(x)|,
 \end{equation}
 and
 \begin{equation}\label{eq2thapprothat}
 \left|\frac{m-m_0}{m}\varphi_{\delta}(x)-\varphi(x)\right|\leq \left(\frac{m_0}{m}+\delta\right)|\varphi(x)|+\delta|\rho(x)|,
 \end{equation}
 for every $x\in X$, $m>m_0$ and $0<\delta<1$.
 
 Using \eqref{eq1thapprothat}, \eqref{eq2thapprothat} and Theorem \ref{th-analytic-approx} $(i)$, we get
 \begin{align*}
 |\psi_{m,\delta}- \varphi|&
 \leq \left|\psi_{m,\delta}- \frac{m-m_0}{m}\varphi_{\delta}\right|+  \left|\frac{m-m_0}{m}\varphi_{\delta}-\varphi\right|\\
 &\leq \sup_{B(x, r)}\varphi_{\delta}- \varphi_{\delta}(x)+C_1r+C_1\frac{|\log r|+|\log\delta|+1}{m}+\left(\frac{m_0}{m}+\delta\right)|\varphi(x)|+\delta|\rho(x)|\\
 &\leq \sup_{\B(x, r)}\varphi-\varphi(x)+2\delta |\rho(x)| +C_1r+B\delta+C_2\frac{|\log r|+|\log\delta|}{m},
 \end{align*}
 for every $m>m_0, r>0$ and $0<\delta<1/2$, where $C_1, C_2>0$ are constants.  Then we have
 \begin{equation}\label{eq3thapprothat}
 \|\psi_{m,\delta}- \varphi\|_{L^p(\mu)}\leq \|\sup_{\B(x, r)}\varphi-\varphi(x)\|_{L^p(\mu)}
 +C_3\left(\frac{|\log \delta|+ |\log r|}{m}+ \delta+r+\delta\|\rho\|_{L^p(\mu)}\right).
 \end{equation}
 
 It follows from \cite[Proposition 4.4]{DinhVietanhMongeampere} that there exist constants $\epsilon, M>0$ depending only on $X, \omega, \theta$ and $\mu$ satisfying
 $$\int_Xe^{-\epsilon w} d\mu\leq M,$$
 for every $w\in\PSH (X, \theta)$ with $\sup_X w=0$. Then, by H\"older inequality, we have
 $$	\|\sup_{\B(x, r)}\varphi-\varphi(x)\|_{L^p(\mu)} \le 
 	\|\sup_{\B(x, r)}\varphi-\varphi(x)\|_{L^1(\mu)}^{\frac{1}{2p}}
 	\|\sup_{\B(x, r)}\varphi-\varphi(x)\|_{L^{2p-1}(\mu)}^{\frac{2p-1}{2p}} \le
 	 C_4 \|\sup_{\B(x, r)}\varphi-\varphi(x)\|_{L^1(\mu)}^{\frac{1}{2p}},$$
 	 where $C_4>0$ is a constant depending only on $M, \epsilon, \mu$ and $p$.
 	  This combined with Lemma \ref{le-hieugiuasupvaham} gives
 	 \begin{equation}\label{eq1th-approthat}
 	 	\|\sup_{\B(x, r)}\varphi-\varphi(x)\|_{L^p(\mu)} \le  C_5 r^{\alpha/p},
 	 \end{equation}
  for every $0<r<r_0$, where $r_0=r_0(X, \omega), \alpha=\alpha (X, \omega, \mu)$ and $C_5=C_5(n, X, \omega, \theta, \mu, B, p)$
  are positive constants.
  
  Combining \eqref{eq3thapprothat} and \eqref{eq1th-approthat}, we get
   $$\|\psi_{m,\delta}- \varphi\|_{L^p(\mu)}\leq
   C_6\left(r^{\alpha/p}+\frac{|\log \delta|+ |\log r|}{m}+ \delta+r\right),$$
  for every $m>m_0$, $0<r<r_0$ and $0<\delta<1/2$. Choosing $r=\frac{r_0}{m^{p/\alpha}}$, we obtain $(i)$.
The proof is completed.
\endproof

\section{Going up to the desingularisation of $N$}\label{sec6}

In this section we will prove Theorem \ref{the-log-continuity}.
Let $\pi: \widehat X \to X$ be the composition of sequence of blowups along smooth centers over $N$ such that $\widehat N:= \pi^{-1}(N)$ is a simple normal crossing hypersurface in $X'$. By Lojasiewicz's inequality, one has 
\begin{equation}\label{eqLoja2}
\dist(\pi(x), N) \lesssim \dist(x, \widehat N) \lesssim \dist^\beta(\pi(x), N),
\end{equation}
for some constant $\beta>0$ independent of $x \in \widehat X$.  Let $\widehat\varphi:= \pi^* \varphi$ which is $\widehat \theta$-psh, where $\widehat \theta:= \pi^* \theta$.

\begin{theorem}\label{th-approxthat-desing} Let $\mu$ be a H\"older continuous measure on $\widehat X$ and $p \ge 1$ be a constant. Assume that $\varphi$ is bounded and let $B:= \|\varphi\|_{L^\infty}$. Then there exist constants $A, C>0$ and  a family of $\widehat \theta$-psh functions $\widehat \psi_{m,\delta}$ with $\delta \in (0, a_0/m), m  \in \Z^+$ satisfying the following three properties:

(i)  $$\|\widehat \psi_{m,\delta}- \widehat \varphi\|_{L^p(\mu)}\le C\frac{|\log \delta|+ \log m}{m}+ C\delta,$$

(ii) $$\widehat \psi_{m,\delta}(x) \ge \widehat \varphi(x)- \frac{B m_0}{m} + A\delta \log \dist(x, \widehat N) -  
C \left(\delta+\frac{|\log \delta|}{m}\right),$$
for every $ x \in \widehat{X}$, 

(iii) $$|\nabla \widehat \psi_{m,\delta}(x)| \le C \delta^{-1/2} e^{(B+1)m} e^{-A m \delta \log \dist(x, \widehat N)},$$
 for every $x \in \widehat{X}$. 
\end{theorem}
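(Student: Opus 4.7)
The plan is to derive everything by direct pullback: set $\widehat \psi_{m,\delta} := \pi^* \psi_{m,\delta}$, where $\psi_{m,\delta}$ is the family of $\theta$-psh functions on $X$ produced by Theorem \ref{th-approxthat} (applied to the H\"older continuous measure $\pi_* \mu$ on $X$, which is indeed H\"older by Corollary \ref{cor-pushdownMAolder}, and to the same bounded $\theta$-psh function $\varphi$ with $B = \|\varphi\|_{L^\infty}$). Since the pullback of a $\theta$-psh function by a holomorphic map is $\pi^*\theta$-psh, each $\widehat\psi_{m,\delta}$ is $\widehat\theta$-psh.

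For property (i), the change-of-variable formula gives $\|\widehat\psi_{m,\delta} - \widehat\varphi\|_{L^p(\mu)} = \|\psi_{m,\delta} - \varphi\|_{L^p(\pi_* \mu)}$; applying Theorem \ref{th-approxthat} (i) with the measure $\pi_*\mu$ yields the stated bound with some constant $C$. For property (ii), evaluate Theorem \ref{th-approxthat} (ii) at $\pi(x)$:
\[
\widehat\psi_{m,\delta}(x) \ge \widehat\varphi(x) - \frac{B m_0}{m} + A_0 \delta \log \dist(\pi(x), N) - C\!\left(\delta + \frac{|\log \delta|}{m}\right).
\]
By the Lojasiewicz estimate \eqref{eqLoja2}, $\dist(\pi(x), N) \gtrsim \dist(x, \widehat N)^{1/\beta}$, so $\log \dist(\pi(x), N) \ge \frac{1}{\beta} \log \dist(x, \widehat N) - C_1$, and multiplying by the positive quantity $A_0\delta$ and absorbing the $O(\delta)$ term into the constant gives (ii) with $A := A_0/\beta$.

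For property (iii), I would use the chain rule $\nabla \widehat\psi_{m,\delta}(x) = D\pi(x)^{T} \nabla \psi_{m,\delta}(\pi(x))$ together with the fact that $|D\pi|$ is bounded on the compact manifold $\widehat X$, giving $|\nabla \widehat\psi_{m,\delta}(x)| \lesssim |\nabla\psi_{m,\delta}(\pi(x))|$. Then Theorem \ref{th-approxthat} (iii) bounds this by $C \delta^{-1/2} e^{(B+1)m} \dist(\pi(x), N)^{-A_0 m \delta}$, and using \eqref{eqLoja2} once more produces $\dist(\pi(x), N)^{-A_0 m \delta} \le C_2^{A_0 m\delta/\beta} \dist(x, \widehat N)^{-A m \delta}$ with $A = A_0/\beta$; the factor $C_2^{A_0 m \delta/\beta} \le e^{C_3 m}$ is absorbed into the exponential $e^{(B+1)m}$ after enlarging the constant.

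The main obstacle (such as it is) is purely bookkeeping with the Lojasiewicz exponent $\beta$: we must check that the constants picked up in translating between $\dist(\pi(x),N)$ and $\dist(x,\widehat N)$ can be absorbed either into the additive $C\delta$ term in (ii) or into the dominant $e^{(B+1)m}$ factor in (iii), so that the final constants depend only on the data $(X, \pi, \omega, \mu, B, p)$ and not on $(m,\delta)$. All three verifications are straightforward once the Lojasiewicz inequality is in hand, so there is no serious analytic difficulty beyond what is already packaged in Theorem \ref{th-approxthat} and Corollary \ref{cor-pushdownMAolder}.
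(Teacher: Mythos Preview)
Your proposal is correct and follows exactly the paper's approach: define $\widehat\psi_{m,\delta}:=\pi^*\psi_{m,\delta}$, deduce (ii) and (iii) from Theorem~\ref{th-approxthat} together with the Lojasiewicz comparison \eqref{eqLoja2}, and deduce (i) from Theorem~\ref{th-approxthat}(i) applied to $\pi_*\mu$, which is H\"older continuous by Corollary~\ref{cor-pushdownMAolder}. The only cosmetic point is that the extra factor $e^{C_3 m\delta}$ in (iii) really shifts the exponent $(B+1)m$ to $(B+C')m$ for some $C'>1$ rather than being absorbed into the front constant $C$; the paper's one-line proof glosses over the same detail, and it is harmless for the subsequent applications.
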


\proof Let $\psi_{m,\delta}$ be functions in Theorem \ref{th-approxthat}. Let $\widehat \psi_{m,\delta}:= \pi^* \psi_{m,\delta}$. 
The desired assertions (ii) and (iii) follow directly from \eqref{eqLoja2} and  Theorem \ref{th-approxthat}. To see why (i) holds, we recall that $\pi$ is a composition of successive blowups along smooth centers. Thus the desired inequality (i) is deduced by Theorem \ref{th-approxthat} and Corollary \ref{cor-pushdownMAolder} applied to $\mu$.  The proof is complete. 
\endproof

\begin{lemma}\label{le-logcontinuity} Assume that $\varphi$ is bounded on $X$ and $(\ddc \varphi+ \theta)^n= \mu$ is a H\"older continuous measure. Let $\gamma$ be an arbitrary constant in $(0,1)$. Then for every constant $D >1$, there is a constant  $c_{D, \gamma}>0$ so that
$$|\widehat\varphi(x)- \widehat\varphi(y)| \le \frac{c_{D, \gamma}}{ |\log\dist (x, y)|^\gamma},$$ 
for every $x, y \in \widehat X \backslash \widehat N$ with 
$$\left(\dist (x, y)\right)^D \le \min \{ \dist(x,\widehat N), \dist (y,\widehat N)\}.$$
\end{lemma}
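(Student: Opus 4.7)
The strategy is to approximate $\widehat\varphi$ by the functions $\widehat\psi_{m,\delta}$ from Theorem \ref{th-approxthat-desing}, which are $\widehat\theta$-psh and whose gradient is under control away from $\widehat N$, and then to connect $x$ and $y$ by a short polygonal path along which this gradient bound can be integrated. The parameters $m$ and $\delta$ will be chosen at the very end as explicit functions of $r:=\dist(x,y)$.

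The upper bound
\[
\widehat\varphi-\widehat\psi_{m,\delta} \le \tfrac{Bm_0}{m}-A\delta\log\dist(\cdot,\widehat N)+C\bigl(\delta+\tfrac{|\log\delta|}{m}\bigr)
\]
is exactly Theorem \ref{th-approxthat-desing}(ii). For the reverse inequality I would work on $X$ itself and apply Proposition \ref{pro-chanduoidayulambda} with $\eta=\theta$, $w_1=\varphi-M_0$ and $w_2=\psi_{m,\delta}-M_0$, where $M_0:=\max(\sup_X\varphi,\sup_X\psi_{m,\delta})$, so that both functions are $\le 0$ and their pointwise maximum is $\ge \varphi-M_0\ge -B-M_0$. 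Here $w_1$ has full Monge-Amp\`ere mass because $\varphi$ is bounded, $M_0$ is bounded uniformly in $m,\delta$ thanks to Theorem \ref{th-approxthat}(i) together with the standard compactness of $\theta$-psh functions with uniformly bounded $L^1$-norm, and $\mu_1=\mu$ satisfies the condition $\mathcal{H}(\beta,A_\beta,\theta)$ for every $\beta>0$ because it is H\"older continuous. Fixing any $\alpha'\in(\gamma,1)$, Proposition \ref{pro-chanduoidayulambda} combined with the $L^p$-bound of Theorem \ref{th-approxthat}(i) and pullback by $\pi$ therefore produces
\[
\widehat\varphi-\widehat\psi_{m,\delta} \ge -C\Bigl(\tfrac{|\log\delta|+\log m}{m}+\delta\Bigr)^{\alpha'}.
\]

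In a local chart near either $x$ or $y$ the divisor $\widehat N$ is a simple normal-crossings union of complex hypersurfaces, each of real codimension two, so Lemma \ref{le-chonduongcong} applies. Under the standing hypothesis $r^D\le\min\{\dist(x,\widehat N),\dist(y,\widehat N)\}$, it furnishes a polygonal chain joining $x$ to $y$ whose total length is $\lesssim r$ and along which $\dist(\cdot,\widehat N)\gtrsim r^D$. Inserting this into the gradient bound of Theorem \ref{th-approxthat-desing}(iii) and integrating along the chain yields
\[
|\widehat\psi_{m,\delta}(x)-\widehat\psi_{m,\delta}(y)| \lesssim \delta^{-1/2}\,e^{(B+1)m}\,r^{\,1-ADm\delta}.
\]
Now set $m=\lfloor c_1|\log r|\rfloor$ with $c_1<1/(B+1)$ and $\delta=c_2|\log r|^{-1-\gamma}$ with $c_2>0$ small. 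Then $m\delta\to 0$ as $r\to 0$, so this gradient term decays like a positive power of $r$ and is negligible compared with $|\log r|^{-\gamma}$; the upper bound on $\widehat\varphi-\widehat\psi_{m,\delta}$ is dominated by $AD\delta|\log r|\asymp|\log r|^{-\gamma}$; and the Kolodziej-type lower bound is $O\bigl((\log|\log r|/|\log r|)^{\alpha'}\bigr)=o(|\log r|^{-\gamma})$ because $\alpha'>\gamma$. The triangle inequality
\[
|\widehat\varphi(x)-\widehat\varphi(y)|\le|\widehat\varphi-\widehat\psi_{m,\delta}|(x)+|\widehat\psi_{m,\delta}(x)-\widehat\psi_{m,\delta}(y)|+|\widehat\varphi-\widehat\psi_{m,\delta}|(y)
\]
then delivers the conclusion.

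The main obstacle is to balance the three competing error sources: the exponential factor $e^{(B+1)m}$ and the boundary blow-up $r^{-ADm\delta}$ in the gradient estimate, against the Kolodziej-type $L^p$-to-$L^\infty$ loss which forces $m$ to grow. The key mechanism is that the scaling $m\sim|\log r|$ converts the exponential $e^{(B+1)m}$ into a harmless power of $r$, while $\delta\sim|\log r|^{-1-\gamma}$ simultaneously drives $m\delta\to 0$ (so that $r^{1-ADm\delta}\lesssim r^{1/2}$) and makes $\delta|\log r|\asymp|\log r|^{-\gamma}$, so that all three sources of error collapse to exactly the desired order $|\log r|^{-\gamma}$.
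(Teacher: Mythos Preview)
Your proposal is correct and follows essentially the same route as the paper: both use the triangle inequality to split $|\widehat\varphi(x)-\widehat\varphi(y)|$ into two pointwise errors $|\widehat\varphi-\widehat\psi_{m,\delta}|$ (controlled via Theorem~\ref{th-approxthat-desing}(ii) from below and via Proposition~\ref{pro-chanduoidayulambda} plus Theorem~\ref{th-approxthat}(i) from above) and one oscillation of $\widehat\psi_{m,\delta}$ along a polygonal path from Lemma~\ref{le-chonduongcong} (controlled by the gradient bound Theorem~\ref{th-approxthat-desing}(iii)), then optimize in $m,\delta$. The only difference is cosmetic: the paper couples the parameters by $\delta=m^{-2D}$ and then sets $m\sim|\log r|$, whereas you choose $m\sim|\log r|$ and $\delta\sim|\log r|^{-1-\gamma}$ independently; both choices make $m\delta\to 0$ and land the dominant error at order $|\log r|^{-\gamma}$.
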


\proof Without loss of generality, we can assume that $0<\dist (x, y)<1/2$. Let $p>1$ be a constant. Denote
 $\gamma_0:= p/(p+2n+1)$ and $\gamma=p/(p+2n+2)$. Note that if $p \to \infty$, then $\gamma \to 1$. Let $\delta:= m^{-2D}$.  By Lemma \ref{lem-chanduoidayulambda} (we choose the constant $\gamma=1$ in Lemma \ref{lem-chanduoidayulambda}) and Theorem \ref{th-approxthat-desing}(i), one get
$$\widehat \psi_{m,\delta}(x)- \widehat\varphi(x) \lesssim_{\gamma_0} \|\widehat \psi_{m,\delta}- \widehat \varphi\|^{\gamma_0}_{L^p(\mu)} \lesssim_{\gamma_0} \left(\frac{\log m}{m}\right)^{\gamma_0},$$
for every $x \in \widehat X$,  $m>m_0$.
This combined with Theorem \ref{th-approxthat-desing} (ii) yields
\begin{align}\label{ine-chanpsimvarphi}
|\widehat\psi_{m,\delta}(x) - \widehat \varphi(x)| \lesssim \left(\frac{\log m}{m}\right)^{\gamma_0}+ m^{-2D} (-\log \dist(x,\widehat N))_+
\end{align}
for every $x \in \widehat X$,  $m>m_0$. Here $(-\log \dist(x,\widehat N))_+=\max\{-\log \dist(x,\widehat N), 0\}$.

 Let $l_{x,y}$ be the curve chosen as in Lemma \ref{le-chonduongcong} (for $\widehat N$ in place of $N$). Now using (\ref{ine-chanpsimvarphi}) and Theorem \ref{th-approxthat-desing} (iii), and Lemma \ref{le-chonduongcong},  we estimate
\begin{align*}
|\widehat\varphi(x)-\widehat\varphi(y)| &\le |\widehat\varphi(x)- \widehat\psi_{m,\delta}(x)|+ |\widehat\varphi(y)- \widehat\psi_{m,\delta}(y)|+ |\widehat\psi_{m,\delta}(x)- \widehat\psi_{m,\delta}(y)| \\
& \lesssim
 \left(\frac{\log m}{m}\right)^{\gamma_0}+  m^{-2D} (-\log \dist(x,\widehat N))_+
 + m^{-2D} (-\log \dist(y,\widehat N))_+\\
& +   \dist (x, y) m^{D} e^{m(B+1)}  e^{-A m^{-2D+1} \log \dist(l_{x,y}(t), \widehat N)},
\end{align*}
for some point $t \in [0,1]$. Since $\dist(l_{x,y}(t),\widehat N) \ge C^{-1} \min \{\dist(x, \widehat N), \dist(y,\widehat N)\}$, we obtain 
\begin{align*}
|\widehat\varphi(x)-\widehat\varphi(y)|  & \lesssim 
\left(\frac{|\log \delta|}{m}\right)^{\gamma_0}+ \delta^{\gamma_0}+  \delta(-\log \min\{ \dist(x,\widehat N), \dist(y,\widehat N)\})_+\\
& + \dist (x, y) \delta^{-1/2} e^{m(B+1)}   e^{-A m^{-2D+1} \log \min \{\dist(x,\widehat N), \dist(y,\widehat N)\}}.
\end{align*}
Hence, if $\left(\dist (x, y)\right)^D \le \min \{ \dist(x,\widehat N), \dist (y,\widehat N)\}$ then we have
\begin{align*}
|\widehat\varphi(x)-\widehat\varphi(y)|  & \lesssim 
m^{-\gamma}-D m^{-2D} \log\dist(x, y)\\
& +  \dist (x, y) m^{D} e^{m(B+1)}   e^{-A D m^{-2D+1} \log\dist (x, y)}.
\end{align*}
By choosing 
$$m:=\max\left\{m_0+1, \frac{\gamma |\log\dist (x, y)|}{3(B+1)}\right\},$$
we get
$$|\widehat\varphi(x)- \widehat\varphi(y)| \le \frac{c_D}{ |\log\dist (x, y)|^{\gamma}},$$ 
for every $x, y \in \widehat X \backslash \widehat N$ with 
$$\left(\dist (x, y)\right)^D \le \min \{ \dist(x,\widehat N), \dist (y,\widehat N)\}.$$
 This finishes the proof. 
\endproof

\begin{proposition} \label{pro-logalphabecontiu} Assume that $\varphi$ is bounded on $X$ and $\theta_\varphi^n$ is a H\"older continuous Monge-Amp\`ere measure. Then for every constant $\gamma \in (0,1)$, there exists a constant $C_\gamma>0$ such that
	\begin{equation*}\label{eqprologalpha}
	|\varphi(x)- \varphi(y)| \le \frac{C_\gamma}{|\log\dist(x, y)|^\gamma},
	\end{equation*}
	for every $x,y \in  X \backslash  N$.
\end{proposition}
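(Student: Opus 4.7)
The plan is to combine Lemma~\ref{le-logcontinuity} (weak log continuity on $\widehat X\setminus\widehat N$), Proposition~\ref{pro-chuyendudistNdenlogcon} (upgrading a weak log bound to a global one), and Lemma~\ref{le-successiveblowup} (descent of log continuity under blowups) in three steps: first, lift the question to $\widehat X$ and apply Lemma~\ref{le-logcontinuity}; second, upgrade to full log continuity on $\widehat X\setminus\widehat N$ by applying Proposition~\ref{pro-chuyendudistNdenlogcon} in local holomorphic charts; third, descend along $\pi:\widehat X\to X$ via Lemma~\ref{le-successiveblowup}.

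In more detail, for a fixed $\gamma\in(0,1)$ and any $D>1$, Lemma~\ref{le-logcontinuity} provides a constant $c_{D,\gamma}$ with
\[
|\widehat\varphi(x)-\widehat\varphi(y)|\le \frac{c_{D,\gamma}}{|\log\dist(x,y)|^{\gamma}}
\]
whenever $x,y\in\widehat X\setminus\widehat N$ satisfy $\dist(x,y)^D\le \min\{\dist(x,\widehat N),\dist(y,\widehat N)\}$. To remove this distance-to-$\widehat N$ condition I would cover $\widehat X$ by finitely many holomorphic charts $U_\alpha\simeq \B\subset \C^n$ in which $\widehat N\cap U_\alpha$ is a union of complex coordinate hyperplanes $\{z_{i_j}=0\}$ (possible because $\widehat N$ is a simple normal crossing hypersurface). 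Each such complex hyperplane is a real affine subspace of $\R^{2n}$ of real codimension~$2$, so the hypothesis ``$\codim N_j\ge 2$'' of Proposition~\ref{pro-chuyendudistNdenlogcon} is satisfied. Since $\varphi$ is continuous on $X$ by \cite{EGZ,Coman-Guedj-Zeriahi,Dinew_Zhang_stability}, the pullback $\widehat\varphi$ is continuous on $\widehat X$, and the function $d(x,y):=|\widehat\varphi(x)-\widehat\varphi(y)|$ defines a $1$-pseudometric on each $U_\alpha\setminus\widehat N$. Applying Proposition~\ref{pro-chuyendudistNdenlogcon}(i) in each chart yields
\[
|\widehat\varphi(x)-\widehat\varphi(y)|\le \frac{C_\alpha}{|\log\dist(x,y)|^{\gamma}},\qquad x,y\in U_\alpha\setminus\widehat N.
\]

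A standard chaining argument using compactness of $\widehat X$ and the $L^\infty$ bound on $\widehat\varphi$ patches these local bounds into a uniform estimate $|\widehat\varphi(x)-\widehat\varphi(y)|\le C|\log\dist(x,y)|^{-\gamma}$ on $\widehat X\setminus\widehat N$. Continuity of $\widehat\varphi$ on $\widehat X$ extends this inequality to all of $\widehat X$, and then Lemma~\ref{le-successiveblowup}, applied to $\pi:\widehat X\to X$ viewed as a composition of blowups along smooth centres, transfers $\log^{\gamma}$ continuity to $\varphi$ on $X$, yielding the desired estimate on $X\setminus N$.

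The main obstacle is the codimension bookkeeping in the second step: although $\widehat N$ has complex codimension one, so one cannot bypass it by straight-line detours inside $\widehat X$, its components become real codimension two once a holomorphic chart is fixed, which is precisely what allows Proposition~\ref{pro-chuyendudistNdenlogcon} to apply. Secondary technical points---comparing $\dist(\cdot,\widehat N\cap U_\alpha)$ with $\dist(\cdot,\widehat N)$ after shrinking the charts slightly, and organising a finite chain of charts to connect arbitrary pairs of points---are routine once the local bound is in place.
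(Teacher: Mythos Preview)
Your proposal is correct and follows essentially the same three-step route as the paper: apply Lemma~\ref{le-logcontinuity} on $\widehat X$, upgrade via Proposition~\ref{pro-chuyendudistNdenlogcon} (using that the SNC components of $\widehat N$ have real codimension~$2$ in local holomorphic charts), and then descend with Lemma~\ref{le-successiveblowup}. The paper's proof is terser and leaves implicit the chart-by-chart application and the codimension bookkeeping you spell out, but the argument is the same.
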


\proof
By Lemma \ref{le-logcontinuity}, we can apply Proposition \ref{pro-chuyendudistNdenlogcon} to $\widehat \varphi$, and we see that $\widehat \varphi$ is $\log^{\gamma}$-continuous on $\widehat X$. This combined with Lemma \ref{le-successiveblowup} yields that $\varphi$ is $\log^{\gamma}$-continuous. 
\endproof





\begin{proof}[Proof of Theorem  \ref{the-log-continuity}]  
	Throughout this proof, $C_j$ ($j=1,2,3...$) is a constant independent of $m,\delta, x, r$.
	
	Let $\gamma\in (0, 1)$. By Proposition  \ref{pro-logalphabecontiu}, we have
	\begin{align*}
\sup_{x'\in \B(x,r)}\varphi_\delta(x')-\varphi_\delta(x) & \le (1-\delta)(\sup_{x'\in\B(x,r)}\varphi(x')-\varphi(x)) - \delta \rho(x)\\
	 &\le C_1\left(|\log r|^{-\gamma}+ \delta |\log \dist(x,N)|\right),
	\end{align*}
for every $x\in X$ and $0<r, \delta<1/2$.
This combined with Theorem \ref{th-analytic-approx} (ii) yields
$$|\nabla \psi_{m,\delta}(x)| \le C_2 \delta^{-1/2}r^{-n-1}e^{C_2 m(|\log r|^{-\gamma}+ \delta |\log \dist(x,N)|+r)},$$
 for every $m>m_0$,  $0<r<1/2$ and $0<\delta<a_0/m$. Now choose 
$$r:=e^{-m^{\frac{1}{1+\gamma}}}.$$
  We obtain that 
\begin{align} \label{ine-saukhicoalpha}
|\nabla \psi_{m,\delta}(x)| \le C_3\delta^{-1/2} e^{C_3 m^{\frac{1}{1+\gamma}}+C_3m \delta |\log \dist(x,N)|},
\end{align} 
for every $x\in X$,  $m>m_0$ and $0<\delta<a_0/m$.

Let $\pi: \widehat X \to X$ and  $\widehat N$ be as in Theorem \ref{th-approxthat-desing}. Let $\widehat \psi_{m,\delta}:= \pi^* \psi_{m,\delta}$. Thanks to (\ref{ine-saukhicoalpha}) one gets immediately the following property (which is a stronger version of Theorem \ref{th-approxthat-desing} (iii)):

\begin{equation}\label{eq1thelogcontinuity}
|\nabla \widehat \psi_{m,\delta}(x)| \le C_4\delta^{-1/2} e^{C_4 m^{\frac{1}{1+\gamma}}+C_4m \delta |\log \dist(x, \widehat{N})|},
\end{equation}
for every $x \in \widehat{X}$.

Now arguing exactly as in the proofs of Lemma \ref{le-logcontinuity} (use \eqref{eq1thelogcontinuity} in place of  Theorem \ref{th-approxthat-desing} (iii)) with $\delta:=m^{-2D}, $
we get
\begin{align*}
|\widehat\varphi(x)-\widehat\varphi(y)|  & \lesssim 
m^{-\gamma}-D m^{-2D} \log\dist(x, y)\\
& +  \dist (x, y) m^{D} e^{C_4m^{\frac{1}{1+\gamma}}}   e^{-C_5 m^{-2D+1} \log\dist (x, y)},
\end{align*}
for every $x, y \in \widehat X \backslash \widehat N$ with 
$\left(\dist (x, y)\right)^D \le \min \{ \dist(x,\widehat N), \dist (y,\widehat N)\}.$
  Now letting $$m:=\max\left\{m_0+1, \left(\frac{\gamma |\log\dist (x, y)|}{3 C_4}\right)^{1+\gamma}\right\},$$ we obtain 
$$|\widehat \varphi(x)- \widehat \varphi(y)| \lesssim |\log |x-y||^{-\gamma (1+ \gamma)},$$
for every $x, y \in \widehat X \backslash \widehat N$ with 
$\left(\dist (x, y)\right)^D \le \min \{ \dist(x,\widehat N), \dist (y,\widehat N)\}.$
We note that if $\gamma \to 1$, then $\gamma(1+ \gamma) \to 2$. 
Using again arguments from the proof of Proposition \ref{pro-logalphabecontiu} we infer that Proposition \ref{pro-logalphabecontiu} holds for $\gamma(1+\gamma)$ in place of $\gamma$. Applying now Proposition \ref{pro-chuyendudistNdenlogcon} to $\widehat \varphi$, we see that $\widehat \varphi$ is $\log^{\gamma(1+\gamma)}$-continuous on $\widehat X$. This combined with Lemma \ref{le-successiveblowup} yields that $\varphi$ is $\log^{\gamma'}$-continuous for every $\gamma' \in (0,2)$.   Repeating this procedure gives the desired assertion.
\end{proof}

\section{Log continuity of Monge-Amp\`ere metrics}

In this section we prove  Corollary \ref{cor-metricsemiample}. 
We start with some auxiliary results.  We fix a smooth K\"ahler form $\omega$ on $X$ which induces a distance on $X$. 

\begin{lemma} \label{le-uocluongsoLeleong} Let $v$ be a bounded $\omega$-psh function. Then there exists a constant $C>0$ such that for every $x \in X$ and $\epsilon \in (0,1]$ one has 
$$\lambda(v,x, \epsilon):=\epsilon^{-2n +2}\int_{\B(x,\epsilon)} (\ddc v+ \omega) \wedge \omega^{n-1} \le C/ |\log \epsilon|.$$
\end{lemma}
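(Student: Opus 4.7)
The plan is to reduce the lemma to the classical fact that the ``Lelong mass at scale $\epsilon$'' of a bounded plurisubharmonic function on a ball in $\C^n$ decays like $1/|\log\epsilon|$. First, I would work locally: cover $X$ by finitely many holomorphic charts $(U_\alpha,z_\alpha)$ on each of which $\omega=\ddc \phi_\alpha$ for a smooth bounded potential $\phi_\alpha$, and on which the Euclidean form $\beta_\alpha := \ddc|z_\alpha|^2$ is comparable to $\omega$, i.e.\ $C^{-1}\beta_\alpha \le \omega \le C\beta_\alpha$. On such a chart $u_\alpha := v+\phi_\alpha$ is a bounded psh function and $\ddc v + \omega = \ddc u_\alpha$, so
\[
\int_{\B(x,\epsilon)} (\ddc v+\omega)\wedge\omega^{n-1} \le C \int_{\B(x,\epsilon)} \ddc u_\alpha \wedge \beta_\alpha^{n-1}.
\]
Hence it suffices to bound the right-hand side by $C\epsilon^{2n-2}/|\log\epsilon|$.

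The core step is the following classical estimate: if $u$ is psh on the Euclidean ball $\B(x,R)$ with $-M \le u \le 0$, then
\[
\nu(u,x,\epsilon) := \epsilon^{-2n+2}\int_{\B(x,\epsilon)} \ddc u \wedge \beta^{n-1} \le \frac{C_n M}{|\log\epsilon|}
\]
for all $0<\epsilon \le R/2$. I would derive this from the Lelong--Jensen formula. Let $\mu(u,x,r)$ denote the spherical mean of $u$ on $\partial\B(x,r)$ and set $\Phi(t):=\mu(u,x,e^t)$. Then $\Phi$ is convex and increasing in $t$, and by Stokes' theorem there is a dimensional constant $c_n>0$ with $\Phi'(t) = c_n\, \nu(u,x,e^t)$. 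Because $-M\le \Phi(t)\le 0$ on the relevant range of $t$, the convexity inequality applied between $t_0=\log\epsilon$ and $t=\log(R/2)$ gives
\[
\Phi'(t_0)\,(\log(R/2)-t_0) \le \Phi(\log(R/2))-\Phi(t_0) \le M,
\]
so $\Phi'(t_0)\le M/|\log\epsilon|$ once $\epsilon$ is small, yielding the desired bound on $\nu(u,x,\epsilon)$.

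Applying this to $u_\alpha$ (which is bounded on $U_\alpha$ since $v$ is bounded on $X$ and $\phi_\alpha$ is smooth) and combining with the comparison $\omega\le C\beta_\alpha$, we obtain $\lambda(v,x,\epsilon)\le C'/|\log\epsilon|$ whenever $\B(x,\epsilon)\subset U_\alpha$. By compactness of $X$ we may cover it by finitely many such charts with a uniform lower bound on how large $\epsilon$ can be while keeping $\B(x,\epsilon)$ inside some chart; for $\epsilon$ above that threshold the estimate is trivial by crude bounds. This yields the uniform constant $C$.

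There is no real obstacle — every ingredient is standard. The only point that needs care is the passage from the Euclidean quantity $\nu(u_\alpha,x,\epsilon)$ to the statement's quantity $\lambda(v,x,\epsilon)$ formulated with $\omega^{n-1}$: this is harmless thanks to the comparison of $\omega$ and $\beta_\alpha$ on a chart, at the cost of a constant depending only on $\omega$ and the chosen finite atlas.
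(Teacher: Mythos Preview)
Your proposal is correct and takes a genuinely different route from the paper's own proof. The paper argues globally: it fixes $x_0$, constructs a negative $\omega$-psh function $\psi$ on all of $X$ with $\psi = c\log|x-x_0|$ near $x_0$, uses the Lelong-type comparison $\epsilon^{-2n+2}\int_{\B(x,\epsilon)}(\ddc v+\omega)\wedge\omega^{n-1}\lesssim \int_{\B(x,\epsilon)}(\ddc v+\omega)\wedge(\ddc\psi+\omega)^{n-1}$ (citing Demailly), multiplies by $-\psi/|\log\epsilon|\ge 1$ on $\B(x,\epsilon)$, and then integrates by parts on the compact manifold to bound $\int_X(-\psi)(\ddc v+\omega)\wedge(\ddc\psi+\omega)^{n-1}$ by a constant times $1+\|v\|_{L^\infty}$. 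Your argument is purely local: you pass to a finite atlas, turn $v$ into a bounded psh function on a Euclidean ball, and invoke the Lelong--Jensen formula together with convexity of the spherical mean in $\log r$ to get the $1/|\log\epsilon|$ decay directly. Your approach is more elementary and self-contained (no global auxiliary potential, no citation), at the cost of a routine patching step; the paper's approach is more intrinsic and gives the dependence on $\|v\|_{L^\infty}$ in one stroke via integration by parts. Both yield the same conclusion with the same dependence of $C$ on $\|v\|_{L^\infty}$ and $(X,\omega)$.
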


\proof Fix  $x_0 \in X$. 
Let $\psi$ be a negative $\omega$-psh function on $X$ such that $\psi= c \log |x-x_0|$ on an open neighborhood of $x_0$ in  $X$ for some constant $c>0$,  and $\psi$ is smooth outside $x_0$. For $\epsilon$ small enough, we see that $\psi(x)= c \log |x-x_0|$ on $\B(x,\epsilon)$. Hence $\psi \le  c\log \epsilon$ on $\B(x_0, \epsilon)$. Recall also that 
$$\epsilon^{-2n +2}\int_{\B(x,\epsilon)} (\ddc v+ \omega) \wedge \omega^{n-1} \lesssim \int_{\B(x, \epsilon)} (\ddc v+ \omega) \wedge (\ddc \psi + \omega)^{n-1},$$
see \cite[Page 159]{Demailly_ag}. Hence we get
\begin{align*}
\lambda(v,x, \epsilon) &\lesssim |\log \epsilon|^{-1} \int_{\B(x,\epsilon)} -\psi (\ddc v+ \omega)\wedge (\ddc \psi + \omega)^{n-1}\\
& \le |\log \epsilon|^{-1} \int_X -\psi (\ddc v+ \omega)\wedge (\ddc \psi + \omega)^{n-1}\\
& = |\log \epsilon|^{-1} \int_X -\psi \omega\wedge (\ddc \psi + \omega)^{n-1}\\
& \quad + |\log \epsilon|^{-1} \int_X -v \ddc \psi \wedge \omega\wedge (\ddc \psi + \omega)^{n-1}\\
&\lesssim |\log \epsilon|^{-1} (\|v\|_{L^\infty}+1).
\end{align*}
This finishes the proof.
\endproof

\begin{lemma}\label{le-xapxi} Let $u$ be a $\log^M$-continuous $\theta$-psh function on $X$ such that $u$ is smooth outside $N$. Let $\delta \in (0,1]$ be a constant. Then there exist a constant $C>0$ independent of $\delta$ and a sequence of smooth $(\theta+ \delta \omega)$-psh function $(u_\epsilon)_\epsilon$ so that $u_\epsilon$ converges  uniformly to $u$ and 
$u_\epsilon$ converges to $u$ locally in the $\mathcal{C}^\infty$-topology on $X \backslash N$.
\end{lemma}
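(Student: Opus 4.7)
The plan is to apply a version of Demailly's regularization procedure for quasi-psh functions and to use the quantitative mass bound from Lemma \ref{le-uocluongsoLeleong} to make the resulting loss in positivity effectively small. Concretely, I would invoke Demailly's analytic approximation (see \cite{Demailly_analyticmethod} or \cite{Demailly_appro_chernconnec}) to produce a family $(u_\epsilon)$ of smooth functions on $X$ that decrease (or at least converge) to $u$ and that satisfy $\theta + \ddc u_\epsilon \ge -\lambda(\epsilon)\omega$, where at each point $x$ the loss $\lambda(\epsilon)(x)$ is controlled, up to a bounded multiplicative constant and a term of order $O(\epsilon)$ coming from the smoothness of $\theta$, by the local mass quantity $\lambda(u,x,\epsilon)$ appearing in Lemma \ref{le-uocluongsoLeleong}.

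Next, because $u$ is bounded (in fact $\log^M$-continuous), Lemma \ref{le-uocluongsoLeleong} applied to $v=u$ gives
\[
\lambda(u,x,\epsilon) \le \frac{C'}{|\log\epsilon|}
\]
uniformly in $x \in X$. Consequently $\lambda(\epsilon) \to 0$ uniformly on $X$, so for every $\delta\in (0,1]$ there is an $\epsilon_0(\delta)>0$ such that $\lambda(\epsilon)\le \delta$ whenever $\epsilon \le \epsilon_0(\delta)$. Restricting to $\epsilon\in(0,\epsilon_0(\delta))$, the resulting smooth functions $u_\epsilon$ are $(\theta+\delta\omega)$-psh as required. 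A uniform bound $\|u_\epsilon\|_{L^\infty}\le C$ with $C$ independent of $\delta$ and $\epsilon$ follows from the fact that the regularization only differs from $u$ by an averaging correction of size $O(1)$, while $u$ itself is bounded.

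The uniform convergence $u_\epsilon \to u$ on $X$ is then a direct consequence of the $\log^M$-continuity of $u$: since Demailly's procedure is obtained (after trivialization) by averaging against a smoothing kernel of radius $O(\epsilon)$ followed by a gluing via a partition of unity, one has $|u_\epsilon(x)-u(x)| \le \omega_u(C\epsilon) + o(1)$, where $\omega_u(t) \lesssim |\log t|^{-M}$ is the modulus of continuity of $u$. For $C^\infty$-convergence on compact subsets of $X\setminus N$, note that $u$ is smooth there, and the regularization reduces on such compacts to an honest smooth convolution (one can cover a fixed compact $K\Subset X\setminus N$ by finitely many charts disjoint from $N$ and take $\epsilon$ smaller than the distance from $K$ to $N$), so standard estimates for derivatives of convolutions give $\|u_\epsilon - u\|_{C^k(K)} \to 0$ for any $k$.

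The main obstacle is selecting a version of Demailly's regularization for which all three properties are simultaneously transparent: (i) the loss of positivity is genuinely governed by the mass quantity $\lambda(u,x,\epsilon)$ of Lemma \ref{le-uocluongsoLeleong} (so that the $1/|\log\epsilon|$ decay propagates into the loss and dominates $\delta$ for small $\epsilon$), (ii) the regularization becomes a standard smooth convolution on compact subsets of $X\setminus N$ so that $C^\infty$-convergence off $N$ is automatic, and (iii) the rate of uniform convergence is controlled by the pointwise modulus of continuity of $u$. This is mostly a bookkeeping matter for the local-to-global gluing (the correction terms from the partition of unity must not spoil the smooth convergence away from $N$, nor the quantitative loss bound), but no essentially new idea is needed beyond a careful choice between Demailly's Bergman-kernel construction and the classical convolution-with-partition-of-unity approach.
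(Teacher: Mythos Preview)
Your approach is correct and essentially the same as the paper's. The paper resolves your closing hesitation by using specifically the exponential-map regularization from \cite[Section~8]{Demailly-estimationL2},
\[
u_\epsilon(z):= \frac{1}{C \epsilon^{2n}}\int_{\zeta \in T_zX} u(\exp_z(\zeta))\, \chi'(|\zeta|^2/\epsilon^2)\, d\lambda(\zeta),
\]
rather than the Bergman-kernel construction (which would produce approximants with analytic singularities, not smooth ones): this is a genuine geodesic convolution with no partition-of-unity gluing, so the positivity loss is directly controlled by $\lambda(u,x,\epsilon)$ via \cite[Proposition~8.5 and Lemma~8.6]{Demailly-estimationL2}, the $\mathcal C^\infty$-convergence on $X\setminus N$ is automatic, and uniform convergence on $X$ follows from the modulus of continuity of $u$.
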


\proof This follows essentially from Demailly's regularisation of psh functions (\cite[Section 8]{Demailly-estimationL2}). Let $\exp_z$ be the exponential map at $z \in X$ of $(X,\omega)$. Let $\chi$ be a cut-off function as in \cite[Page 492]{Demailly-estimationL2}. We define
$$u_\epsilon(z):= \frac{1}{C \epsilon^{2n}}\int_{\zeta \in T_zX} u(\exp_z(\zeta)) \chi'(|\zeta|^2/\epsilon^2) d  \lambda(\zeta),$$
where $d \lambda$ denotes the Lebesgue measure on the Hermitian space $T_zX$ and 
$$C:= \int_{\zeta \in T_z X}\chi'(|\zeta|^2) d  \lambda(\zeta).$$ 
One sees immediately that $u_\epsilon$ is $\log^M$-continuous uniformly in $\epsilon$ because $u$ is already $\log^M$-continuous.

By \cite[Proposition 8.5 and Lemma 8.6]{Demailly-estimationL2}, we know that there is a constant $A_1>0$ such that 
$$\ddc u_\epsilon(x) + \theta(x) \ge \big(-A_1 \lambda(u,x, \epsilon)- A_1 \epsilon/ |\log \epsilon|\big) \omega,$$
where 
$$\lambda(u,x, \epsilon):=\epsilon^{-2n +2}\int_{\B(x,\epsilon)} (\ddc u+ \omega) \wedge \omega^{n-1} \le A_2/ |\log \epsilon|$$ 
for some constant $A_2$ independent of $x$ by Lemma \ref{le-uocluongsoLeleong}. Hence we infer that 
 $$\ddc u_\epsilon + \omega\ge -A_3 |\log  \epsilon|^{-1} \omega$$
for some constant $A_3>0$ independent of $\epsilon$. This finishes the proof.  
\endproof

\begin{lemma}\label{le-hamkhoangcach} Let $\delta \in (0,1]$, $M>1$ and $C_0>0$ be constants.
	 Let $u$ be
	a smooth $(\theta+\delta \omega)$-psh functions such that 
$$|u(x)- u(y)| \le C_0 |\log \dist (x,y)|^{-2M}$$
 for every $x,y \in X$.    
Denote by $\tilde{d}$ the distance induced by $\ddc u+ \theta+ \delta \omega$. Then
$$\tilde{d}(x, y) \le C |\log \dist(x,y)|^{-M+1}$$
for every $x,y \in X$, where $C>0$ is a constant independent of $u$ and $\delta$.  
\end{lemma}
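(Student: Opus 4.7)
The plan is to bound $\tilde d(x,y)$ by the $\tilde g$-length of a carefully chosen curve from $x$ to $y$, using the log continuity of $u$ to control the mass of $\tilde\omega = \theta+\delta\omega+\ddc u$ on small balls.

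First, I would establish a Lelong-type mass estimate. Working in a local chart, choose a cutoff $\chi$ supported in $B(w,2\rho)$, with $\chi\equiv 1$ on $B(w,\rho)$ and $|\ddc\chi|_\omega\le C\rho^{-2}$. Since $\chi\tilde\omega\wedge\omega^{n-1}\ge 0$, integration by parts gives
$$\int_{B(w,\rho)}\tilde\omega\wedge\omega^{n-1}\le\int_X\chi(\theta+\delta\omega)\wedge\omega^{n-1}+\int_X (u-u(w))\,\ddc\chi\wedge\omega^{n-1}.$$
The first term is $O(\rho^{2n})$, while the log continuity of $u$ bounds the second by $C\rho^{-2}|\log\rho|^{-2M}\cdot\rho^{2n}$. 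Therefore
$$\int_{B(w,\rho)}\tilde\omega\wedge\omega^{n-1}\le C\rho^{2n-2}|\log\rho|^{-2M}$$
for all $w$ and all small $\rho$, with $C$ independent of $\delta$.

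Next, for $x,y$ at Euclidean distance $r$ inside a chart, I would estimate the $\tilde g$-length of translated segments $\gamma_s(t):=(1-t)x+ty+s$ for $s\in B(0,\rho)$ with $\rho\le r$. The pointwise bound $|v|^2_{\tilde g(z)}\le C|v|_\omega^{2}\,\mathrm{tr}_\omega\tilde\omega(z)$ and Cauchy--Schwarz give
$$L_{\tilde g}(\gamma_s)^2\le\int_0^1|\gamma_s'(t)|^2_{\tilde g(\gamma_s(t))}\,dt\le Cr^2\int_0^1\mathrm{tr}_\omega\tilde\omega(\gamma_s(t))\,dt.$$
Averaging in $s$, swapping integrals, and applying the mass estimate to each ball $B(\gamma(t),\rho)$ yields
$$\frac{1}{|B(0,\rho)|}\int_{B(0,\rho)}L_{\tilde g}(\gamma_s)^2\,ds\le Cr^2\rho^{-2}|\log\rho|^{-2M},$$
so there exists $s_*\in B(0,\rho)$ with $L_{\tilde g}(\gamma_{s_*})\le Cr\rho^{-1}|\log\rho|^{-M}$ connecting $x+s_*$ and $y+s_*$.

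The third step is an iterative bridging. By the triangle inequality
$$\tilde d(x,y)\le L_{\tilde g}(\gamma_{s_*})+\tilde d(x,x+s_*)+\tilde d(y+s_*,y),$$
and the two remaining bridges at Euclidean distance $\le\rho$ are handled recursively. I would choose a sequence of scales $\rho_k$ (e.g.\ of the form $r_k=r\exp(-\alpha(2^k-1))$) tending to $0$ fast enough that the contribution of stage $k$ is of order $|\log r_k|^{-M}$; then the resulting sum is bounded, thanks to the hypothesis $M>1$, by a geometric-type series that evaluates to $C|\log r|^{-(M-1)}$. Finally, for general $x,y\in X$ one reduces to the local case via a short chain of nearby pairs, using compactness of $X$.

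The main difficulty is the iteration in step three: the translation family introduces an offset $s_*$ that must itself be bridged, and the naive doubling of bridges at each stage leads to a divergent series. The careful choice of scales $\rho_k$ is essential, and the logarithmic loss from $|\log r|^{-M}$ to $|\log r|^{-(M-1)}$ reflects precisely the price of eliminating these offsets.
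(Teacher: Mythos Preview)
Your Steps 1 and 2 are correct, and Step 1 is in fact the same key estimate the paper uses (phrased there as a bound on $\int_{B}|\nabla d|^2_\omega\,\omega^n$ via $|\nabla d|^2_\omega\le n\,\tilde\omega\wedge\omega^{n-1}/\omega^n$).

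The gap is Step 3. Your recursion reads
\[
f(r)\ \le\ C\,\frac{r}{\rho}\,|\log\rho|^{-M}\ +\ 2f(\rho),\qquad 0<\rho\le r,
\]
with $f(r)=\sup_{\dist(x,y)\le r}\tilde d(x,y)$, and the factor $2$ from the two bridges is fatal. After $k$ steps you carry $2^k$ endpoint pairs; to beat this by the log decay you would need $|\log\rho_k|\gtrsim 2^{k/M}$, i.e.\ $\rho_k\lesssim\exp(-c\,2^{k/M})$, but then the ratios $\rho_{k-1}/\rho_k$ blow up doubly exponentially and the direct segment term $C(\rho_{k-1}/\rho_k)|\log\rho_k|^{-M}$ diverges. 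Your suggested scales $r_k=r\exp(-\alpha(2^k-1))$ exhibit exactly this: $r_{k-1}/r_k=e^{\alpha 2^{k-1}}$, so the $k$-th contribution is of order $2^{k}e^{\alpha 2^{k-1}}(\alpha 2^k)^{-M}\to\infty$. No choice of scales closes a recursion with multiplicative factor $2$.

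The paper sidesteps this by working with the distance function $d(\cdot)=\tilde d(\cdot,x^*)$ itself rather than with path lengths. From $|\nabla d|_{\tilde\omega}\le 1$ one gets the pointwise bound $|\nabla d|^2_\omega\le \mathrm{tr}_\omega\tilde\omega$, so your Step~1 yields
\[
\int_{B(x_0,r)}|\nabla d|_\omega^2\,\omega^n\ \lesssim\ r^{2n-2}|\log r|^{-2M}.
\]
Poincar\'e's inequality then controls the oscillation of the averages $d_r(x_0)$, and the dyadic telescoping
\[
|d(x_0)-d_r(x_0)|\ \le\ \sum_{k\ge 0}\big|d_{2^{-k}r}(x_0)-d_{2^{-k-1}r}(x_0)\big|\ \lesssim\ \sum_{k\ge 0}\big(|\log r|+k\log 2\big)^{-M}\ \lesssim\ |\log r|^{-M+1}
\]
is \emph{additive}: one sums the same function at shrinking scales around the same point, so no factor $2^k$ appears. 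A Campanato-type comparison between $d_r(x)$ and $d_{3r}(x^*)$ then gives $\tilde d(x,x^*)\lesssim|\log\dist(x,x^*)|^{-M+1}$. If you want to keep the segment-averaging picture, the fix is to feed it into this framework (bounding $\int|\nabla d|^2$) rather than iterating on pairs.
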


\begin{proof}
Let $\Omega(r)$ be the modulus of continuity of $u$. By hypothesis, one has
\begin{align}\label{ine-Moduluscontinuity}
\Omega(r) \le C_0 |\log r|^{-2 M}
\end{align}
for every $0 < r <1$.  We cover $X$ by finitely many local charts (which are relatively compact in bigger local charts) and since the K\"ahler form $\omega$ is equivalent to the standard K\"ahler form on $\C^n$ in these local charts, we can assume that $\omega$ is equal to the standard form on $\C^n$  on these local charts.

Let $\B(x,r)$ denotes the ball of radius $r$ with center at $x \in \C^n$.
Fix $x^* \in X$ and a local chart $U$ around $x^*$ biholomorphic to $\B(0,2)$ such that $x^* =0$ in these local coordinates. Define $d(x):= \tilde{d}(x,0)$. Recall that  $\tilde{d}$ is the Riemannian metric induced by $\ddc u + \theta+ \delta\omega$.   For $x \in \B(0,1)$,   let
$$d_{r}(x):=\vol(\B(x, r))^{-1}\int_{x' \in \B(x,r)} d(x')\, \omega^n.$$
Arguing as in the proof of \cite[Lemma 5]{Guo-Phong-Tong-Wang} (see also  \cite{YangLi} or \cite{BinGuo}) and using (\ref{ine-Moduluscontinuity}), for every  $x_0 \in \B(0,1)$, one obtains
$$\int_{\B(x_0, r)} |\nabla d|^2_\omega  \omega^n \le C_1 r^{2n}+ C_1 \int_{\B(x_0, 3r/2)} |u(x)- u(x_0)| \omega^n \le  C_2 r^{2n-2} |\log r|^{-2 M}.$$
Therefore, by Poincar\'e inequality, we infer
$$r^{-2n} \int_{\B(x_0,r)}|d(x)- d_{r}(x_0)|^2 \omega^n \le C_3 |\log r|^{-2 M},$$
where $C_3>0$ is a uniform constant independent of $x_0, \delta$ and $r$.  This combined with H\"older inequality gives
\begin{align} \label{ine-tichphandFOmega}
    r^{-2n} \int_{\B_\omega(x_0,r)}|d(x)- d_{r}(x_0)| \omega^n \le C_3 |\log r|^{-M}.
\end{align}
We now use some arguments similar to the proof of Campanato's lemma. We follow the presentation in \cite[Chapter 3]{Han-Lin-book}. Assume
 $0 <r_1<r_2<1$ and $x_1, x_2\in\B(0, 1)$ with $\B(x_1, r_1)\subset \B(x_2, r_2)$. Observe that 
$$|d_{r_1}(x_0)- d_{r_2}(x_0)| \le |d_{r_1}(x_0)- d(x)|+ |d_{r_2}(x_0)- d(x)|$$
for every $x \in \B(x_1, r_1)\subset \B(x_2, r_2)$. It follows that
\begin{multline*}
|d_{r_1}(x_1)- d_{r_2}(x_2)| \le \\
\vol(\B_\omega(x_1, r_1))^{-1} \bigg(\int_{\B_\omega(x_1, r_1)}|d_{r_1}(x_1)- d(x)| \omega^n + \int_{\B_\omega(x_2, r_2)}|d_{r_2}(x_2)- d(x)| \omega^n \bigg) 
\end{multline*}
By (\ref{ine-tichphandFOmega}), it follows that
\begin{equation}\label{eq_campanato}
|d_{r_1}(x_1)- d_{r_2}(x_2)|\lesssim r_1^{-2n} \big(r_1^{2n} |\log r_1|^{-M} +  r_2^{2n} |\log r_2|^{-M}\big).
\end{equation}
Applying the last inequality to $r_1= r/2^{k+1}, r_2= r/ 2^k$ and $x_1=x_2=x_0$ yields
$$|d_{2^{-k} r}(x_0)- d_{2^{-k-1}r}(x_0)| \le  C_4\big(|\log r|+ (k+1) \log 2\big)^{-M}\leq 
\int_{k}^{k+1}\frac{C_4 dt}{(|\log r|+ t \log 2)^{-M}},$$
for some uniform constant $C_4>0$ independent of $x_0, r,k, \epsilon,\delta$.
Summing over $k=0, 1, 2, \ldots$ yields
$$ \sum_{k=0}^\infty |d_{2^{-k} r}(x_0)- d_{2^{-k-1}r}(x_0)| \le \int_{0}^{\infty}\frac{C_4 dt}{(|\log r|+ t \log 2)^{-M}}\leq C_5 |\log r|^{-M+1}.$$
 Since $d_{r}$ converges uniformly to $d$ on $\B (0, 1)$ as $r\searrow 0$, it follows that
\begin{align}\label{ine-dFx0truditrungbinh}
|d(x_0)- d_{r}(x_0)| \le C_5 |\log r|^{-M+1}
\end{align}
for every $x_0 \in \B(x^*, 1)$ (recall $x^*= 0$), and $0<r< 1$. Let $x \in \B(x^*, 1/8)$ and $r:= \dist(x,x^*) \le 1/8$. Using
 \eqref{ine-dFx0truditrungbinh} and then  applying \eqref{eq_campanato} for $x_1=x$, $x_2=x^*$ and $r_2=3 r_1=3 r$, we get
\begin{align*}
d(x) &= d(x)- d(x^*) \\
&\le |d(x)- d_{r}(x)|+ |d(x^*)- d_{3r}(x^*)|+ |d_{r}(x)- d_{3r}(x^*)| \\
&\lesssim |\log r|^{-M+1} +|d_{r}(x)- d_{3r}(x^*)|\\
&\lesssim |\log r|^{-M+1} +|\log r|^{-M}\\
&\lesssim |\log r|^{-M+1}.
\end{align*}
This finishes the proof.
\end{proof}

\begin{proof}[Proof of Corollary \ref{cor-metricsemiample}]  Write $\omega_F= \ddc u+ \theta$, where $u$ is $\log^M$-continuous $\theta$-psh function for every constant $M>1$ by Theorem \ref{the-log-continuity}.
Fix $\delta \in (0,1]$.  Let $u_\epsilon$ be as in Lemma \ref{le-xapxi} for $u$. Since $\ddc u_\epsilon +\theta+ \delta \omega$ converges to $\ddc u+ \theta+ \delta \omega \ge  \ddc u+ \theta$ locally in the $\mathcal{C}^\infty$-topology in $X \backslash N$, one sees that the desired assertion follows from   Lemma \ref{le-hamkhoangcach} by letting $\epsilon \to 0$. 
 \end{proof}

\bibliography{biblio_family_MA,biblio_Viet_papers,bib-kahlerRicci-flow}

\bibliographystyle{siam}

\bigskip

\noindent
\Addresses
\end{document}